\newcommand{\Ben}{\begin{enumerate}}
\newcommand{\Een}{\end{enumerate}}
\newcommand{\Bit}{\begin{itemize}}
\newcommand{\Eit}{\end{itemize}}
\newcommand{\Beq}{\begin{equation}}
\newcommand{\Eeq}{\end{equation}}
\newcommand{\Ba}{\begin{align*}}
\newcommand{\Ea}{\end{align*}}
\newcommand{\Mb}{\mathbf}
\newtheorem{Th}{Theorem}
\newtheorem{Lem}{Lemma}
\newtheorem{Prop}{Proposition}
\newtheorem{Rq}{Remark} 
\newtheorem{Corr}{Corollary}
\newcommand{\Tb}{\textcolor{black}}
\begin{document}
\title{\Tb{Extremal dependence and spatial risk measures for insured losses due to extreme winds}}
\date{December 18, 2019}

\author{
~~ Erwan Koch\footnote{EPFL, Institute of Mathematics, EPFL SB MATH MATH-GE, MA B1 457 (B\^atiment MA), Station 8, 1015 Lausanne, Switzerland. ETH Zurich (Department of Mathematics, RiskLab). \newline Email: erwan.koch@epfl.ch}}

\maketitle

\begin{abstract}
\Tb{
A meticulous assessment of the risk of impacts associated with extreme wind events is of great necessity for populations, civil authorities as well as the insurance industry. Using the concept of spatial risk measure and related set of axioms introduced by \cite{koch2017spatial, koch2019SpatialRiskAxioms}, we quantify the risk of losses due to extreme wind speeds. The insured cost due to wind events is proportional to the wind speed at a power ranging typically between $2$ and $12$. Hence we first perform a detailed study of the correlation structure of powers of the \Tb{Brown--Resnick max-stable random fields} and look at the influence of the power.} Then, using the latter results, we thoroughly investigate spatial risk measures associated with variance and induced by powers of max-stable random fields. In addition, we show that spatial risk measures associated with several classical risk measures and induced by such cost fields satisfy (at least part of) the previously mentioned axioms \Tb{under conditions which are generally satisfied for the risk of damaging extreme wind speeds. In particular, we specify the rates of spatial diversification in different cases, which is valuable for the insurance industry.} 

\medskip

\noindent \textbf{Key words:} \Tb{Extreme wind speed}; \Tb{Insurance}; Powers of max-stable random fields; \Tb{Reinsurance}; Spatial diversification; Spatial risk measures and corresponding axioms; Wind damage.  

\medskip

\noindent \textbf{Mathematics Subject Classification (2000):} 60F05; 60G60; 60G70; 91B30.
\end{abstract}


\section{Introduction}
\label{Sec_Intro}

\Tb{Extratropical cyclones (such as European windstorms) and tropical cyclones constitute a major risk for society, as can be seen from the consequences in Europe of Windstorms Lothar and Martin in December 1999 (140 fatalities and damage of around 19 billion USD) and, in many Caribbean islands and parts of Florida, of Hurricane Irma in September 2017 (at least 134 deaths and damage exceeding 67.8 billion USD). An accurate evaluation of this risk is essential for civil authorities and the insurance industry.}

Having in mind the spatial nature of environmental extreme events, \cite{koch2017spatial, koch2019SpatialRiskAxioms} introduced a new notion of spatial risk measure, which makes explicit the contribution of space and \Tb{allows one} to take into account the spatial dependence of the cost field in the risk measurement. The spatial risk measure associated with a classical risk measure $\Pi$ and induced by a cost random field $C$ (e.g., modelling the cost due to damage caused by a windstorm) is the function of space resulting from the application of $\Pi$ to the normalized integral of $C$ on various geographical areas. \cite{koch2017spatial, koch2019SpatialRiskAxioms} also proposed a set of axioms characterising how the value of an induced spatial risk measure is expected to evolve with respect to the space variable, at least under some conditions on $\Pi$ and $C$. To the best of our knowledge, the papers by \cite{koch2017spatial, koch2019SpatialRiskAxioms} are the first articles establishing a theory about risk measures in a spatial context where the risks spread over a geographical region. This theory is of interest for the insurance industry as it allows, for instance, \Tb{the quantification of} the rate of spatial diversification. 

One of the main goals of the present paper is to \Tb{apply this notion of spatial risk measure and related axioms to analyze the risk of losses due to extreme wind speeds. We model extreme wind speeds using max-stable random fields \citep[e.g.,][]{haan1984spectral, de2007extreme, davison2012statistical}, which are especially suitable to model the temporal maxima of a given variable at all points in space since they constitute the only possible non-degenerate limiting field of pointwise maxima taken over suitably rescaled independent copies of a random field \citep[e.g.,][]{haan1984spectral}. Moreover, we consider the power-law damage function $D(z)=(z/c_1)^{\beta}$, $z>0$, for $\beta \in \mathbb{N} \backslash \{ 0 \}$ and $c_1>0$}, which is particularly adapted \Tb{to wind hazard. Our cost field model arises from the application of $D$ to max-stable fields. Although powers $\beta$ equalling $2$ or $3$ are justified by physical arguments (wind load and dissipation rate of wind kinetic energy, respectively) for the effective cost, it has been shown that they may be much higher when insured costs are considered. For instance, \cite{prahl2012applying} find exponents ranging from $8$ to $12$ for residential buildings in Germany and argue that such large exponents stem for instance from the presence of a deductible in the insurance contract. Hence an important aspect of this paper will be to study how the spatial dependence and risk evolve with respect to that power.} 

First, we thoroughly investigate the correlation structure of powers of the Brown--Resnick max-stable random field. This part contains new theoretical results for the \Tb{Brown--Resnick field} and, therefore, may be of interest \Tb{for the extreme-value community independently of any risk-related consideration. Moreover, we perform a numerical study using typical values for the parameters of the generalized extreme-value (GEV) distribution governing annual wind speed maxima, and show that the correlation between damage at two stations basically does not depend on the value of the damage power. This is useful news for insurance companies owing to the large range for possible values of that power.} Then, we study several spatial risk measures induced by powers of some max-stable random fields, \Tb{mainly the Brown--Resnick fields}. Using the first part, we theoretically study spatial risk measures associated with variance and induced by such cost fields. \Tb{This analysis is supplemented with a numerical study where, again, we look at the influence of the value of the power $\beta$}. Moreover, we show that, \Tb{under conditions which are generally satisfied for the risk of losses due to extreme wind}, spatial risk measures associated with several classical risk measures and induced by powers of some max-stable fields satisfy (at least part of) the axioms introduced in \cite{koch2017spatial, koch2019SpatialRiskAxioms}. \Tb{Inter alia, we know the rates of spatial diversification in the cases where the classical risk measures are the variance, the value-at-risk (VaR) and the expected shortfall (ES)}. The obtained results \Tb{may} have useful implications for the insurance industry \Tb{and, throughout the study, we keep a strong connection with concrete actuarial practice.}

The remainder of the paper is organized as follows. In Section \ref{Sec_Reminder_Spatial_Risk_Measures}, we briefly \Tb{expose} the notion of spatial risk measure and the corresponding set of axioms introduced in \cite{koch2017spatial, koch2019SpatialRiskAxioms}. Moreover, we specify the cost field model underlying the examples of spatial risk measures considered\Tb{; we especially review the literature about wind damage functions and provide a short introduction to max-stable fields.} Section \ref{Sec_Dependence_Power_Maxstable} investigates the correlation structure of powers of the \Tb{Brown--Resnick} random fields. In Section \ref{Sec_Examples_SRM_Power_Damage_Function}, we study some spatial risk measures induced by \Tb{those} cost fields. Finally, Section \ref{Sec_Conclusion} provides a short summary as well as some perspectives. Throughout the paper, the elements belonging to $\mathbb{R}^d$ for some $d \geq 1$ are denoted \Tb{by} bold symbols, whereas those in more general spaces are designated using normal font. Moreover, $\nu$ stands for the Lebesgue measure in $\mathbb{R}^d$ and $\mathbb{N}_*=\mathbb{N} \backslash \{ 0 \}$. Finally, $\overset{d}{=}$ and $\overset{d}{\rightarrow}$ denote equality and convergence in distribution, respectively. In the case of random fields, distribution has to be understood as the set of all finite-dimensional distributions. 

\section{Spatial risk measures and cost field model}
\label{Sec_Reminder_Spatial_Risk_Measures}

\subsection{Spatial risk measures and corresponding axioms}
\label{Subsec_Spat_Risk_Axioms}

Let $\mathcal{A}$ be the set of all compact subsets of $\mathbb{R}^2$ with a positive Lebesgue measure and $\mathcal{A}_c$ the set of all convex elements of $\mathcal{A}$. Denote by $\mathcal{C}$ the set of all real-valued and measurable\footnote{\Tb{Throughout the paper, when applied to random fields, the adjective ``measurable'' means ``jointly measurable''.}} random fields on $\mathbb{R}^2$ having almost surely (a.s.) locally integrable sample paths. Each field characterizes the economic or insured cost generated by the events belonging to specified categories and occurring during a given time period, say $[0,T_L]$. In the following, $T_L$ is considered as fixed and does not appear for the sake of notational simplicity. Each category of events (e.g., \Tb{European windstorms or hurricanes}) will be named a hazard in the following. Let $\mathcal{L}$ be the set of all real-valued random variables defined on an adequate probability space. A risk measure is some function $\Pi: \mathcal{L} \to \mathbb{R}$ and will be referred to as a classical risk measure throughout the paper in order to \Tb{avoid confusion} with a spatial risk measure. A classical risk measure $\Pi$ is said to be law-invariant if, for all $X \in \mathcal{L}$, $\Pi (X)$ only depends on the distribution of $X$.

\Tb{\citet{koch2017spatial, koch2019SpatialRiskAxioms} introduced the normalized spatially aggregated loss function, defined as
\Beq
\label{Eq_NormSpatAggreLoss}
L_N(A,C)= \dfrac{1}{\nu(A)} \displaystyle \int_{A} C(\Mb{x}) \  \nu(\mathrm{d}\Mb{x}), \quad A \in \mathcal{A}, C \in \mathcal{C},
\Eeq
which disentangles the contribution of the space and the contribution of the hazards.}
The quantity $L_N(A,C)$ represents the economic or insured loss per surface unit on region $A$ due to hazards whose costs can be modelled with the cost field $C$. 
\Tb{A spatial risk measure $\mathcal{R}_{\Pi}$ as introduced in \cite{koch2017spatial, koch2019SpatialRiskAxioms} is defined by
$$ \mathcal{R}_{\Pi}(A,C) = \Pi ( L_N(A,C) ), \quad A \in \mathcal{A}, C \in \mathcal{C},$$
where $\Pi$ is a classical risk measure. This notion makes explicit the contribution of space in the risk measurement, and, for many useful risk measures $\Pi$ such as, e.g., variance, VaR and ES, enables one to take (at least) part} of the spatial dependence structure of the field $C$ into account. For a given $\Pi$ and $C \in \mathcal{C}$, the quantity $\mathcal{R}_{\Pi}(\cdot,C)$ is referred to as the spatial risk measure associated with $\Pi$ and induced by $C$. \Tb{The distribution of $L_N(A,C)$ only depends on $A$ and the finite-dimensional distributions of $C$ \citep[][Theorem 1]{koch2019SpatialRiskAxioms}}. Consequently, for a fixed $A$, if $\Pi$ is law-invariant, then $\mathcal{R}_{\Pi}(A, C)$ only depends on the finite-dimensional distributions of $C$.

\Tb{Now, let $\Pi$ be a classical risk measure, $C \in \mathcal{C}$ be fixed and, for $A \in \mathcal{A}$, let $\Mb{b}_A$ denote its barycenter. \cite{koch2017spatial, koch2019SpatialRiskAxioms} defined the following axioms} for the spatial risk measure associated with $\Pi$ and induced by $C$, $\mathcal{R}_{\Pi}(\cdot, C)$:
\Ben
\item Spatial invariance under translation: for all $\Mb{v} \in \mathbb{R}^2$ and $A \in \mathcal{A},  \ \mathcal{R}_{\Pi}(A+\Mb{v}, C)=\mathcal{R}_{\Pi}(A,C)$, where $A+\Mb{v}$ denotes the region $A$ translated by the vector $\Mb{v}$.
\item Spatial sub-additivity: for all $A_1, A_2 \in \mathcal{A},\  \mathcal{R}_{\Pi}(A_1 \cup A_2, C) \leq \min \{ \mathcal{R}_{\Pi}(A_1, C),\mathcal{R}_{\Pi}(A_2, C) \}$.
\item Asymptotic spatial homogeneity of order $- \gamma, \gamma \geq 0$: for all $A \in \mathcal{A}_c$,
$$
\mathcal{R}_{\Pi}(\lambda A, C) \underset{\lambda \to \infty}{=} K_1(A,C)+\dfrac{K_2(A,C)}{\lambda^{\gamma}} + o\left(\frac{1}{\lambda^{\gamma}}\right),
$$
where $\lambda A$ is the area obtained by applying to $A$ a homothety with center $\Mb{b}_A$ and ratio $\lambda >0$, and $K_1(\cdot, C): \mathcal{A}_c  \to \mathbb{R}$, $K_2(\cdot, C): \mathcal{A}_c  \to \mathbb{R} \backslash \{ 0 \}$ are functions depending on $C$.
\Een
It is also legitimate to introduce the axiom of spatial anti-monotonicity:
for all $A_1, A_2 \in \mathcal{A}$, $A_1 \subset A_2 \Rightarrow \mathcal{R}_{\Pi}(A_2, C) \leq \mathcal{R}_{\Pi}(A_1, C)$. The latter is equivalent to the axiom of spatial sub-additivity. \Tb{These axioms concern the spatial risk measures properties with respect to space and not to the cost distribution, the latter being fixed. They} seem natural and make sense at least under some conditions on the cost field $C$ and for some classical risk measures $\Pi$, as shown in \cite{koch2017spatial, koch2019SpatialRiskAxioms}. The axiom of spatial sub-additivity qualitatively points out spatial diversification. If it is satisfied \Tb{with strict inequality}, an insurance company would be well advised to underwrite policies in both regions $A_1$ and $A_2$ instead of only one of them. The axiom of asymptotic spatial homogeneity of order $-\gamma$ quantifies the rate of spatial diversification when the area becomes wide. Hence, knowing $\gamma$ might be valuable for the insurance industry. 

\Tb{For more details about these notions and an account of their value for real actuarial practice, see \cite{koch2019SpatialRiskAxioms}, Sections 2.1 and 2.2.}

\subsection{\Tb{Cost field model for extreme wind speeds}}
\label{Subsec_Specification_Cost_Field_Model}

\Tb{Throughout the paper, we assume the cost to be an insured cost.}
The general cost field model introduced in \cite{koch2017spatial}, Section 2.3, is defined by
$ \{ C(\Mb{x}) \}_{\Mb{x} \in \mathbb{R}^2} = \{ E(\Mb{x})\  D ( Z(\Mb{x})) \}_{\Mb{x} \in \mathbb{R}^2}$, where $\{ E(\Mb{x}) \}_{\Mb{x} \in \mathbb{R}^2}$ is the exposure field, $D$ the damage function and $\{ Z(\Mb{x}) \}_{\Mb{x} \in \mathbb{R}^2}$ the random field of the environmental variable generating risk. \Tb{Here, the cost is assumed to result from a unique wind hazard (e.g., \Tb{windstorms, hurricanes, tornadoes}) which is characterized by the random field of wind speed extremes over the period $[0, T_L]$, $Z$.
The application of the damage function $D$ to $Z$ yields the insured cost ratio at each site, which, multiplied by the exposure, gives the corresponding insured cost.} For the purpose of this paper, we choose the exposure to be uniformly equal to unity. 

\medskip

\Tb{We consider, for $\beta \in \mathbb{N}_*$ and $c_1 >0$, the damage function $D(z)=(z/c_1)^{\beta}, z \in \mathbb{R}$,
which is perfectly suited to the case of wind. Based on physical considerations, the total cost for a specific structure is expected to increase as the square or the cube of the maximum wind speed. Indeed, wind loads and dissipation rate of wind kinetic energy are proportional to the second and third powers of wind speed, respectively. For the square, see, e.g., \citet{simiu1996wind}, Equations (4.7.1), (8.1.1) and (8.1.8) and the interpretation following Equation (4.1.20). Regarding the third power, see, among others, \citet[][Chapter 2, p.7]{lamb1991historic}, where the cube of the wind speed appears in the severity index, and \cite{emanuel2005increasing}. In his discussion of the paper by \cite{powell2007tropical}, \cite{kantha2008tropical} states that wind damage for a given structure must be proportional to the rate of work done (and not the force exerted) by the wind and therefore strongly argues in favour of the cube rather than the square. In addition to this debate about whether the square or cube is more appropriate for total costs, several studies in the last two decades have found power-laws with much higher exponents when insured costs are considered. For instance, \cite{prahl2012applying} found powers ranging from $8$ to $12$ for insured losses on residential buildings in Germany (local damage functions). \cite{prahl2015comparison} argue that, if the total cost follows a cubic law but the insurance contract is triggered only when that cost exceeds a positive threshold (e.g., in the case of a contract with deductible), then the resulting cost for the insurance company is of power-law type but with a higher exponent. We have checked this statement using simulations and observed that the resulting exponent depends on the threshold (not shown).}

\Tb{Several authors \citep[e.g.,][]{klawa2003model, pinto2007changing, donat2011high} use, even in the case of insured losses, a cubic relationship that they justify with the physical arguments given above. However, they apply the third power to the difference between the wind speed value and a high percentile of the wind distribution and not to the effective wind speed; as shown by \citet[][Appendix A3]{prahl2015comparison}, this is equivalent to applying a much higher power to the effective wind speed.}

\Tb{Due to the various possible values for the right exponent in the damage function (especially depending on the value of the deductible), in this paper we will consider $\beta= 1, \ldots, 12$. Without loss of generality, we take $c_1=1$, which is consistent with the value we have chosen for the exposure. Note that exponential damage functions are sometimes also encountered in the literature \cite[e.g.,][]{huang2001long, prettenthaler2012risk}; we do not consider such functions here.}

\medskip

Furthermore, we take $Z$ to be a \Tb{max-stable random field} such that the field $Z^{\beta}$ belongs to $\mathcal{C}$, i.e., is measurable and has a.s. locally integrable sample paths. The latter property is satisfied, e.g., as soon as $Z$ is measurable and the function $\Mb{x} \mapsto  \mathbb{E} \left[ \left|Z(\Mb{x})^{\beta}\right| \right]$ is locally integrable \citep[][Proposition 1]{koch2019SpatialRiskAxioms}. Most often, $Z$ will be the Brown--Resnick random field \Tb{(with appropriate margins)}; see below. \Tb{As stated in, e.g., \citet[][Section 2.3]{huser2014space}, in addition to be very natural models for pointwise maxima, max-stable fields provide appropriate models for extremes of individual observations.
}

\Tb{
We shall sometimes assume that $Z$ has standard Fr\'echet margins; a max-stable field with such margins is said to be simple and will be indicated with ``$(\mathrm{s})$'' in superscript. Any simple max-stable random field $Z^{(\mathrm{s})}$ on $\mathbb{R}^d$ can be written \citep[e.g.,][]{haan1984spectral} as
\Beq
\label{Eq_Spectral_Representation_Stochastic_Processes}
\left \{ Z^{(\mathrm{s})}(\Mb{x}) \right \}_{\Mb{x} \in \mathbb{R}^d} \overset{d}{=} \left \{ \bigvee_{i=1}^{\infty} \{ U_i Y_i(\Mb{x}) \} \right \}_{\Mb{x} \in \mathbb{R}^d},
\Eeq
where the $(U_i)_{i \geq 1}$ are the points of a Poisson point process on $(0, \infty)$ with intensity function $u^{-2} \nu(\mathrm{d}u)$ and the $Y_i, i\geq 1$, are independent replications of a random field $\{ Y(\Mb{x}) \}_{\Mb{x} \in \mathbb{R}^d}$ such that, for all $\Mb{x} \in \mathbb{R}^d$,
$\mathbb{E}[Y(\Mb{x})]=1$. The field $Y$ is not unique and is called a spectral random field of $Z^{(\mathrm{s})}$. Conversely, any random field of the form \eqref{Eq_Spectral_Representation_Stochastic_Processes} is a simple max-stable field. Now, let $(U_i, \Mb{C}_i)_{i \geq 1}$ be the points of a Poisson point process on $(0,\infty) \times \mathbb{R}^d$ with intensity function $u^{-2} \nu(\mathrm{d}u) \times \nu(\mathrm{d}\Mb{c})$. Independently, let $f_i, i \geq 1$, be independent replicates of some non-negative random function $f$ on $\mathbb{R}^d$ satisfying $\mathbb{E} \left[ \int_{\mathbb{R}^d} f(\Mb{x}) \ \nu(\mathrm{d}\Mb{x}) \right]=1$. Then, the mixed moving maxima (M3) random field
\Beq
\label{Eq_Mixed_Moving_Maxima_Representation}
\{ Z^{(\mathrm{s})}(\Mb{x}) \}_{\Mb{x} \in \mathbb{R}^d}= \left \{ \bigvee_{i=1}^{\infty} \{ U_i f_i(\Mb{x}-\Mb{C}_i) \} \right \}_{\Mb{x} \in \mathbb{R}^d}
\Eeq
is a stationary\footnote{Throughout the paper, stationarity refers to strict stationarity.} and simple max-stable field. 
Equations \eqref{Eq_Spectral_Representation_Stochastic_Processes} and \eqref{Eq_Mixed_Moving_Maxima_Representation} are useful in practice as they enable the building up of parametric models for max-stable fields, some of which are briefly presented below. Let $\{ \varepsilon(\Mb{x}) \}_{\Mb{x} \in \mathbb{R}^d}$ be a stationary standard Gaussian random field with any correlation function.
}

\Tb{
Let $\{ W(\Mb{x}) \}_{\Mb{x} \in \mathbb{R}^d}$ be a centred Gaussian random field with stationary increments and with variogram $\gamma_W$, and $\{ Y(\Mb{x}) \}_{\Mb{x} \in \mathbb{R}^d}$ be defined by
$ Y(\Mb{x})=\exp \left( W(\Mb{x})-\mathrm{Var}(W(\Mb{x}))/2 \right)$,
where $\mathrm{Var}$ denotes the variance.
Then the field $Z^{(\mathrm{s})}$ defined by \eqref{Eq_Spectral_Representation_Stochastic_Processes} with that $Y$ is referred to as the \emph{Brown--Resnick random field} associated with the variogram $\gamma_W$ \citep{brown1977extreme, kabluchko2009stationary}. It is stationary and its distribution only depends on the variogram \citep[][Theorem 2 and Proposition 11, respectively]{kabluchko2009stationary}. The special case where $W(\Mb{x})=\sigma \varepsilon(\Mb{x})$, $\sigma>0$, leads to the \Tb{so-called} \emph{geometric Gaussian random field} \citep[e.g.,][]{davison2012statistical}.
Now, if $Z^{(\mathrm{s})}$ is written as in \eqref{Eq_Mixed_Moving_Maxima_Representation} with $f$ being the density of a $d$-variate Gaussian random vector with mean $\Mb{0}$ and positive-definite covariance matrix $\Sigma$, it is referred to as the \emph{Smith random field} with covariance matrix $\Sigma$ \citep{smith1990max}. The \emph{Schlather model} \citep{schlather2002models} results from taking $Y(\Mb{x}) = \sqrt{2 \pi} \varepsilon(\Mb{x})$ in \eqref{Eq_Spectral_Representation_Stochastic_Processes}. Finally, the \emph{tube model} (preliminary version of \cite{ancona2002diagnostics}, and \cite{koch2017spatial}) arises when taking in \eqref{Eq_Mixed_Moving_Maxima_Representation} $f(\Mb{y})=h_b \ \mathbb{I}_{ \{ \| \Mb{y} \| < R_b \} }, \Mb{y} \in \mathbb{R}^2$, where $R_b>0$ and $h_b=1/(\pi R_b^2)$.
}

\Tb{A commonly used variogram for the Brown--Resnick field is 
\Beq
\label{Eq_Power_Variogram}
\gamma_W(\Mb{x})= \left( \| \Mb{x} \| / \kappa \right)^{\psi}, \quad \Mb{x} \in \mathbb{R}^d,
\Eeq
where $\kappa>0$ and $\psi \in (0, 2]$ are the range and the smoothness parameters, respectively. An equivalent parametrization of \eqref{Eq_Power_Variogram} is $\gamma_W(\Mb{x}) = m \| \Mb{x} \|^{\psi}$, where $m>0$ and $\psi \in (0, 2]$.}
The variogram \Tb{$\gamma_W$ of a random field $W$} on $\mathbb{R}^d$ with stationary increments is said to be isotropic if, for all $\Mb{x} \in \mathbb{R}^d$, \Tb{$\gamma_W(\Mb{x})$} only depends on $\| \Mb{x} \|$, where $\| . \|$ denotes the Euclidean \Tb{norm}. In this case, we associate with \Tb{$\gamma_W$ the univariate function $\gamma_{W, \mathrm{u}}: [0, \infty) \to [0, \infty)$ such that, for all $\Mb{x} \in \mathbb{R}^d$, $\gamma_W(\Mb{x})=\gamma_{W, \mathrm{u}}(\| \Mb{x} \|)$}. \Tb{In the following we shall mainly focus on the class of Brown--Resnick random fields, which includes the Smith random field. Indeed the Smith field with covariance matrix $\Sigma$} corresponds to the Brown--Resnick field associated with the variogram 
\Beq
\label{Eq_Variogram_Smith_Field}
\gamma_W(\Mb{x})=\Mb{x}' \Sigma^{-1} \Mb{x}, \quad \Mb{x} \in \mathbb{R}^d,
\Eeq
where $'$ designates transposition; see, e.g., \cite{huser2013composite}. \Tb{The variogram in \eqref{Eq_Variogram_Smith_Field} can also be written $\| \Mb{x} \|_{\Sigma}^2$, where $\| \cdot \|_{\Sigma}$ is the norm associated with the inner product induced by the matrix $\Sigma^{-1}$.}

\Tb{The bivariate extremal coefficient function $\Theta$ \citep[e.g.,][]{schlather2003dependence}, which is a well-known measure of spatial dependence for max-stable fields, satisfies, for all $u >0$,
$\mathbb{P}\left( Z^{(\mathrm{s})}(\Mb{x}_1) \leq u, Z^{(\mathrm{s})}(\Mb{x}_2) \leq u \right) = \exp ( -\Theta(\Mb{x}_1, \Mb{x}_2)/u )$, $\Mb{x}_1, \Mb{x}_2 \in \mathbb{R}^d$, where $\{ Z^{(\mathrm{s})}(\Mb{x}) \}_{\Mb{x} \in \mathbb{R}^d}$ is simple max-stable.}

\Tb{
In practical applications, max-stable fields are not simple but have GEV univariate marginal distributions with location, scale and shape parameters $\eta \in \mathbb{R}$, $\tau >0$ and $\xi \in \mathbb{R}$. If $\{ Z(\Mb{x}) \}_{\Mb{x} \in \mathbb{R}^2}$ is a max-stable field with such GEV parameters, we can write
\Beq
\label{Eq_Link_Maxstb_Simple_Maxstab}
Z(\Mb{x}) = 
\left \{
\begin{array}{ll}
\left( \eta-\tau/\xi \right) + \tau Z^{(\mathrm{s})}(\Mb{x})^{\xi}/\xi, & \quad \xi \neq 0, \\
\eta + \tau \log(Z^{(\mathrm{s})}(\Mb{x})), & \quad \xi = 0,
\end{array}
\right.
\Eeq
where $\{ Z^{(\mathrm{s})}(\Mb{x}) \}_{\Mb{x} \in \mathbb{R}^d}$ is simple max-stable.
}

\medskip

\Tb{Finally, note that other authors considered a similar quantity as \eqref{Eq_NormSpatAggreLoss} within an extreme-value framework, but without cost field and not as a tool to develop a concept of spatial risk measures. For instance, \cite{coles1996modelling} modelled the so-called areal rainfall using the normalized spatial integral of a max-stable field, \cite{ferreira2012exceedance} investigated the tail properties of the integral over a compact region of a continuous random field in the maximum-domain of attraction of a max-stable field, and \cite{dombry2015functional} proposed the use of the spatial integral of a field to define threshold exceedances in the context of Pareto fields.}

\section{\Tb{Correlation of powers of Brown--Resnick fields and applications to wind extremes}}
\label{Sec_Dependence_Power_Maxstable}

\subsection{\Tb{Theory}}

Several dependence measures for max-stable random fields have been introduced in the literature: the extremal coefficient \citep[e.g.,][]{schlather2003dependence}, the F-madogram \citep{cooley2006variograms} and the $\lambda$-madogram \citep{naveau2009modelling}, among many others. \Tb{Here we propose a new spatial dependence measure which is the correlation of powers of max-stable fields and not max-stable fields themselves. More precisely, letting $\{ Z(\Mb{x}) \}_{\Mb{x} \in \mathbb{R}^2}$ be a max-stable random field with GEV parameters $\eta \in \mathbb{R}$, $\tau>0$, $\xi \neq 0$, and $\beta \in \mathbb{N}_*$ such that $\beta \xi < 1/2$, we focus on 
\Beq
\label{Eq_DepMeas}
\mathcal{D}_{\beta, \eta, \tau, \xi}(\Mb{x}_1, \Mb{x}_2)=\mbox{Corr} \left( Z(\Mb{x}_1)^{\beta}, Z(\Mb{x}_2)^{\beta} \right), \quad \Mb{x}_1, \Mb{x}_2 \in \mathbb{R}^2.
\Eeq
\Tb{The main motivation for considering this quantity lies in the fact that it can be seen as a measure of spatial dependence for damage due to wind (see Section \ref{Subsec_Specification_Cost_Field_Model}) and is thus fruitful for actuarial practice. It may also prove to be useful for the theoretical understanding of max-stable fields and it will be helpful} for the study of spatial risk measures associated with variance in Section \ref{Subsec_Variance}. Despite its drawbacks, correlation is commonly used in the finance/insurance industry, making its study useful from a practical point of view. Furthermore, the criticism that it does not properly capture extremal dependence is somehow irrelevant here as we consider the correlation between random variables which already model extreme events.} 

The purpose of this section is the study of $\mathcal{D}_{\beta, \eta, \tau, \xi}$ in the case of the Brown--Resnick random field. 
\Tb{Owing to the wide range of possible values for the damage power $\beta$, its sensitivity with respect to $\beta$ will also be considered.
The Brown--Resnick field is one of the most (if not the most) suitable models among currently available max-stable models, at least for environmental data \citep[e.g.,][Section 7.4, in the case of rainfall]{davison2012statistical}. Notably, it allows realistic realizations as well as independence when distance goes to infinity.}

\medskip

\Tb{From \eqref{Eq_Link_Maxstb_Simple_Maxstab}, we know that any max-stable field with general GEV margins can be expressed as a function of a simple max-stable field. Hence, we first consider simple max-stable fields as they are easier to handle. The following lemma is immediate and thus the proof is omitted.}
\begin{Lem}
\label{Chapriskmeasure_Lem_Frechet_Margins}
Let $\beta \in \mathbb{R}$ and $Z^{(\mathrm{s})}$ be a random variable following the standard Fr\'echet distribution. Then $(Z^{(\mathrm{s})})^{\beta} $ has a finite first moment if and only if (iff) $\beta < 1$ and a finite second moment iff $\beta < 1/2$.
Moreover,
$\mathbb{E} [(Z^{(\mathrm{s})})^{\beta}]= \Gamma(1-\beta)$, where $\Gamma$ denotes the gamma function.
\end{Lem}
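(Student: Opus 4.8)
The statement concerns a standard Fr\'echet random variable $Z^{(\mathrm{s})}$, whose distribution function is $F(z) = \exp(-1/z)$ for $z > 0$, and asks about the moments of $(Z^{(\mathrm{s})})^{\beta}$. The plan is to compute $\mathbb{E}[(Z^{(\mathrm{s})})^{\beta}]$ directly via the change of variables that turns the Fr\'echet integral into a gamma integral, and to read off the existence conditions from the convergence of that integral.

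First I would write the expectation as an integral against the Fr\'echet density. Differentiating $F$ gives $f(z) = z^{-2} \exp(-1/z)$ for $z > 0$, so
\Beq
\mathbb{E}\left[(Z^{(\mathrm{s})})^{\beta}\right] = \int_0^{\infty} z^{\beta} z^{-2} e^{-1/z} \, \mathrm{d}z = \int_0^{\infty} z^{\beta-2} e^{-1/z} \, \mathrm{d}z .
\Eeq
The natural substitution is $t = 1/z$, so that $z = 1/t$ and $\mathrm{d}z = -t^{-2}\,\mathrm{d}t$; as $z$ ranges over $(0,\infty)$ so does $t$, and the integral becomes $\int_0^{\infty} t^{-(\beta-2)} e^{-t} t^{-2}\,\mathrm{d}t = \int_0^{\infty} t^{-\beta} e^{-t}\,\mathrm{d}t = \int_0^{\infty} t^{(1-\beta)-1} e^{-t}\,\mathrm{d}t$. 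This last integral is exactly the gamma integral $\Gamma(1-\beta)$, which yields the closed form claimed in the statement.

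It then remains to justify the finiteness conditions, and this is simply a matter of recalling when the gamma integral converges. The integrand $t^{(1-\beta)-1} e^{-t}$ is integrable near $t = \infty$ for every real $\beta$ because of the exponential decay, so the only issue is the behaviour near $t = 0$, where $e^{-t} \to 1$ and the integrand behaves like $t^{-\beta}$; this is integrable on $(0,1)$ iff $-\beta > -1$, i.e.\ iff $\beta < 1$. Thus $\mathbb{E}[(Z^{(\mathrm{s})})^{\beta}]$ is finite iff $\beta < 1$, giving the first-moment claim. For the second-moment claim I would apply the same computation to $(Z^{(\mathrm{s})})^{2\beta}$, equivalently replacing $\beta$ by $2\beta$: $\mathbb{E}[(Z^{(\mathrm{s})})^{2\beta}] = \Gamma(1-2\beta)$ is finite iff $2\beta < 1$, i.e.\ iff $\beta < 1/2$, which is the condition for a finite second moment.

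Honestly, there is no real obstacle here, which is consistent with the paper calling the lemma ``immediate''; the whole content is the substitution $t = 1/z$ mapping the Fr\'echet integral to a gamma integral and the elementary convergence analysis at the origin. The only point requiring a moment of care is to make sure the gamma-function identity is invoked in its integral form valid for all $\beta < 1$ (not merely positive arguments), since $\Gamma(1-\beta)$ with $\beta$ possibly negative still corresponds to a convergent integral; the analytic continuation of $\Gamma$ is not needed, only the defining integral on $(0,\infty)$.
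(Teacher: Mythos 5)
Your proof is correct: the density computation, the substitution $t = 1/z$ reducing the expectation to the gamma integral $\int_0^{\infty} t^{(1-\beta)-1}e^{-t}\,\mathrm{d}t$, and the convergence analysis at the origin (with the second-moment case handled by replacing $\beta$ with $2\beta$) are exactly the standard computation the paper has in mind when it declares the lemma immediate and omits the proof. Nothing is missing, and your remark that only the defining integral of $\Gamma$ on $(0,\infty)$ is needed (valid for all $\beta<1$, including negative $\beta$) is an appropriate point of care.
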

For $\beta_1 , \beta_2 <1/2$, we introduce the function $g_{\beta_1, \beta_2}^{(\mathrm{s})}$ defined by
\Beq
\label{Eq_Def_g_beta1_beta2}
g_{\beta_1, \beta_2}^{(\mathrm{s})}(h) =
\left \{
\begin{array}{ll}
\Gamma(1-\beta_1-\beta_2), & \mbox{if} \quad  h=0, \\ 
\displaystyle \int_{0}^{\infty} \theta^{\beta_2} \Big[ C_2(\theta,h) \  C_1(\theta,h)^{\beta_1+\beta_2 -2} \ \Gamma(2-\beta_1-\beta_2) \\ \qquad + C_3(\theta,h) \ C_1(\theta,h)^{\beta_1+\beta_2-1} \ \Gamma(1-\beta_1-\beta_2) \Big] \  \nu(\mathrm{d}\theta), & \mbox{if} \quad h>0,
\end{array}
\right.
\Eeq
where, for $\theta, h > 0$,
\begin{align*}
C_1(\theta,h) &=   \Phi \left( \frac{h}{2}+ \frac{\log(\theta)}{h} \right)+\frac{1}{\theta} \Phi \left( \frac{h}{2}- \frac{\log(\theta)}{h} \right), \\
C_2(\theta,h) &= \left[   \Phi \left( \frac{h}{2}+ \frac{\log \left( \theta \right)}{h} \right) +\frac{1}{h} \phi \left( \frac{h}{2}+ \frac{\log(\theta)}{h} \right)-\frac{1}{h \theta} \phi \left(  \frac{h}{2}-\frac{\log \left( \theta \right)}{h} \right) \right]
\\& \quad \ \times \left[ \frac{1}{\theta^2} \Phi \left(  \frac{h}{2}- \frac{\log(\theta)}{h} \right)+\frac{1}{h \theta^2} \phi \left( \frac{h}{2}- \frac{\log(\theta)}{h} \right)-\frac{1}{h \theta} \phi \left( \frac{h}{2}+ \frac{\log(\theta)}{h} \right) \right], \\
C_3(\theta,h) &= \frac{1}{h^2 \theta} \left( \frac{h}{2}- \frac{\log(\theta)}{h} \right) \ \phi \left( \frac{h}{2}+ \frac{\log(\theta)}{h} \right)+\frac{1}{h^2 \theta^2} \left( \frac{h}{2}+ \frac{ \log(\theta) }{h} \right) \phi \left( \frac{h}{2}- \frac{\log \left( \theta \right)}{h}  \right),
\end{align*}
with $\Phi$ and $\phi$ \Tb{denoting the standard Gaussian distribution and density functions, respectively. We denote by $\mathrm{Cov}$ the covariance. The following result will help us a lot to derive the expression of $\mathcal{D}_{\beta, \eta, \tau, \xi}$.
\begin{Th}
\label{Th_Expression_Covariance_Damages}
Let $\{ Z^{(\mathrm{s})}(\Mb{x}) \}_{\Mb{x} \in \mathbb{R}^2}$ be a simple Brown--Resnick random field associated with the variogram $\gamma_W$. Then, for all $\Mb{x}_1, \Mb{x}_2 \in \mathbb{R}^2$ and $\beta_1, \beta_2 < 1/2$, we have 
\Beq
\label{Eq_Covariance_Damages_BR}
\mathrm{Cov} \left( Z^{(\mathrm{s})}(\Mb{x}_1)^{\beta_1}, Z^{(\mathrm{s})}(\Mb{x}_2)^{\beta_2} \right)= g_{\beta_1, \beta_2}^{(\mathrm{s})} \left( \sqrt{\gamma_W(\Mb{x}_2-\Mb{x}_1)} \right)- \Gamma(1-\beta_1) \Gamma(1-\beta_2).
\Eeq
\end{Th}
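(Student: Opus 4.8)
The plan is to reduce the statement to the computation of a single mixed moment. Writing out the covariance,
\[
\mathrm{Cov}\!\left( Z^{(\mathrm{s})}(\Mb{x}_1)^{\beta_1}, Z^{(\mathrm{s})}(\Mb{x}_2)^{\beta_2} \right) = \mathbb{E}\!\left[ Z^{(\mathrm{s})}(\Mb{x}_1)^{\beta_1} Z^{(\mathrm{s})}(\Mb{x}_2)^{\beta_2} \right] - \mathbb{E}\!\left[ Z^{(\mathrm{s})}(\Mb{x}_1)^{\beta_1} \right] \mathbb{E}\!\left[ Z^{(\mathrm{s})}(\Mb{x}_2)^{\beta_2} \right],
\]
I would first note that, since $Z^{(\mathrm{s})}$ is simple, each margin is standard Fr\'echet, so Lemma \ref{Chapriskmeasure_Lem_Frechet_Margins} gives the product of the marginal expectations as $\Gamma(1-\beta_1)\Gamma(1-\beta_2)$, finite because $\beta_1,\beta_2<1/2<1$. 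This already accounts for the subtracted term in \eqref{Eq_Covariance_Damages_BR}, so the entire task becomes to show that the mixed moment equals $g^{(\mathrm{s})}_{\beta_1,\beta_2}\!\big(\sqrt{\gamma_W(\Mb{x}_2-\Mb{x}_1)}\big)$. Set $h=\sqrt{\gamma_W(\Mb{x}_2-\Mb{x}_1)}$.

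I would dispose of the degenerate case first. When $h=0$ (forcing $\Mb{x}_1=\Mb{x}_2$) complete dependence holds, and the mixed moment collapses to $\mathbb{E}[Z^{(\mathrm{s})}(\Mb{x}_1)^{\beta_1+\beta_2}]=\Gamma(1-\beta_1-\beta_2)$ by Lemma \ref{Chapriskmeasure_Lem_Frechet_Margins} (finite since $\beta_1+\beta_2<1$), matching the first line of \eqref{Eq_Def_g_beta1_beta2}. For $h>0$, the pair $(Z^{(\mathrm{s})}(\Mb{x}_1),Z^{(\mathrm{s})}(\Mb{x}_2))$ follows the bivariate H\"usler--Reiss max-stable law with distribution function $F(z_1,z_2)=\exp(-V(z_1,z_2))$, where the exponent (stable tail dependence) function $V$ is precisely the one whose restriction to the first margin equal to $1$ gives $V(1,\theta)=C_1(\theta,h)$; I would confirm this from the known Brown--Resnick bivariate law, the paper's variogram normalization being pinned down by the extremal coefficient $V(1,1)=2\Phi(h/2)$, which yields $h=\sqrt{\gamma_W(\Mb{x}_2-\Mb{x}_1)}$. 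Since $h>0$ the law is absolutely continuous with density $f=(V_1 V_2 - V_{12})\,e^{-V}$, where $V_i=\partial V/\partial z_i$ and $V_{12}=\partial^2 V/\partial z_1\partial z_2$.

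The computation of the double integral $\int_0^\infty\!\int_0^\infty z_1^{\beta_1} z_2^{\beta_2}(V_1V_2 - V_{12})e^{-V}\,\mathrm{d}z_1\,\mathrm{d}z_2$ would then exploit homogeneity: $V$ is homogeneous of degree $-1$, hence $V_i$ of degree $-2$ and $V_{12}$ of degree $-3$. The substitution $z_1=r$, $z_2=r\theta$ (Jacobian $r$) decouples the radial and angular parts, since $V(r,r\theta)=r^{-1}C_1(\theta,h)$, $V_i(r,r\theta)=r^{-2}V_i(1,\theta)$ and $V_{12}(r,r\theta)=r^{-3}V_{12}(1,\theta)$. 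Collecting powers of $r$ and integrating it out through $\int_0^\infty r^{s-1}e^{-a/r}\,\mathrm{d}r=\Gamma(-s)\,a^{s}$ (valid for $s<0$, $a>0$), applied with $s=\beta_1+\beta_2-2$ and $s=\beta_1+\beta_2-1$, both negative because $\beta_1+\beta_2<1$, reproduces exactly the factors $\Gamma(2-\beta_1-\beta_2)$, $\Gamma(1-\beta_1-\beta_2)$ and the powers $C_1(\theta,h)^{\beta_1+\beta_2-2}$, $C_1(\theta,h)^{\beta_1+\beta_2-1}$ of \eqref{Eq_Def_g_beta1_beta2}. The interchange of integrations and of substitution with integration is justified by Tonelli's theorem, the integrand being non-negative, with finiteness of the result guaranteed by Cauchy--Schwarz together with Lemma \ref{Chapriskmeasure_Lem_Frechet_Margins} (second moments finite for $\beta_i<1/2$). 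It then remains only to match the angular integrand, namely to check $C_2(\theta,h)=V_1(1,\theta)V_2(1,\theta)$ and $C_3(\theta,h)=-V_{12}(1,\theta)$.

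The hard part will be this last identification. It amounts to differentiating the H\"usler--Reiss exponent $V(z_1,z_2)=z_1^{-1}\Phi(w_1)+z_2^{-1}\Phi(w_2)$ with $w_1=h/2+h^{-1}\log(z_2/z_1)$ and $w_2=h/2+h^{-1}\log(z_1/z_2)$, evaluating at $(1,\theta)$, and simplifying, using $\phi'(w)=-w\phi(w)$ for the second-order term. Carrying out $\partial_{z_1}$ and $\partial_{z_2}$ and then setting $z_1=1,z_2=\theta$ should recover the two bracketed factors of $C_2$ as $-V_1(1,\theta)$ and $-V_2(1,\theta)$, so that their product is $V_1V_2$, while the mixed second derivative yields $-V_{12}(1,\theta)=C_3(\theta,h)$; these manipulations are lengthy but routine. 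A secondary care point is to confirm absolute continuity (no atom on the diagonal) for $h>0$, so that using the density is legitimate, and to track the variogram normalization so that the dependence parameter is exactly $\sqrt{\gamma_W(\Mb{x}_2-\Mb{x}_1)}$ and not a constant multiple of it.
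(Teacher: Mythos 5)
Your proposal is correct and follows essentially the same route as the paper: the same covariance decomposition with Lemma \ref{Chapriskmeasure_Lem_Frechet_Margins}, the same treatment of the degenerate case via complete dependence, and for $h>0$ the same pseudo-polar change of variables $(z_1,z_2)=(r,r\theta)$ applied to the bivariate Brown--Resnick (H\"usler--Reiss) density, with the radial integral evaluated by the identity $\int_0^\infty r^{s-1}e^{-a/r}\,\nu(\mathrm{d}r)=\Gamma(-s)a^s$, which is exactly the paper's Fr\'echet-moment computation. The only difference is presentational: the step you defer as ``lengthy but routine'' (checking $C_2=V_1V_2$ and $C_3=-V_{12}$ at $(1,\theta)$) is precisely what the paper avoids by citing the explicit density formula, Equation (4) of \cite{padoan2010likelihood}, transferred from the Smith field to the Brown--Resnick field by replacing $\|\Mb{x}_2-\Mb{x}_1\|_{\Sigma}$ with $\sqrt{\gamma_W(\Mb{x}_2-\Mb{x}_1)}$.
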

}
\begin{proof}
\Tb{Let $\beta_1 , \beta_2 < 1/2$ and $\Mb{x} \in \mathbb{R}^2$.} 

First, we show the result in the case where $\Mb{x}_1=\Mb{x}_2=\Mb{x}$. Since $Z$ is simple max-stable, it follows from Lemma \ref{Chapriskmeasure_Lem_Frechet_Margins} that
\begin{align*}
\mathrm{Cov} \left( Z^{(\mathrm{s})}(\Mb{x}_1)^{\beta_1}, Z^{(\mathrm{s})}(\Mb{x}_2)^{\beta_2} \right) =\Gamma(1-\beta_1-\beta_2)-\Gamma(1-\beta_1)\Gamma(1-\beta_2) = g_{\beta_1, \beta_2}^{(\mathrm{s})}(0)-\Gamma(1-\beta_1)\Gamma(1-\beta_2),
\end{align*} 
\Tb{which yields \eqref{Eq_Covariance_Damages_BR} as $\gamma_W(\Mb{0})=0$.}

Now, we prove the result in the case where $\Mb{x}_1, \Mb{x}_2$ are distinct vectors of $\mathbb{R}^2$. We have
$$\mathbb{E} \left[ Z^{(\mathrm{s})}(\Mb{x}_1)^{\beta_1} Z^{(\mathrm{s})}(\Mb{x}_2)^{\beta_2} \right] = \displaystyle \int_{0}^{\infty} \displaystyle \int_{0}^{\infty} z_1^{\beta_1} z_2^{\beta_2} l(z_1, z_2) \ \nu(\mathrm{d}z_1) \ \nu(\mathrm{d}z_2),$$
where $l$ denotes the bivariate density of the \Tb{Brown--Resnick field $Z^{(\mathrm{s})}$} (at $\Mb{x}_1$ and $\Mb{x}_2$).
In order to take advantage of the radius/angle decomposition of multivariate extreme-value distributions, we \Tb{make the change of variable}
$$
\begin{pmatrix}
z_1 \\
z_2
\end{pmatrix}
=\begin{pmatrix}
u \\
\theta\  u
\end{pmatrix}
=\begin{pmatrix}
\Psi_1(u, \theta) \\
\Psi_2(u, \theta)
\end{pmatrix}
=\Psi(u, \theta).
$$
The corresponding Jacobian matrix is written
$$
J_{\Psi}(u, \theta)=
\begin{pmatrix}
1 & 0 \\
& \\
\theta & u
\end{pmatrix},
$$  
and its determinant is thus $\det(J_{\Psi}(u, \theta))=u$.
Therefore, introducing 
$$a(z_1,z_2)=z_1^{\beta_1} z_2^{\beta_2} l(z_1, z_2), \quad z_1, z_2 >0,$$ 
we have
\begin{align}
\mathbb{E} \left[ Z^{(\mathrm{s})}(\Mb{x}_1)^{\beta_1} Z^{(\mathrm{s})}(\Mb{x}_2)^{\beta_2} \right] & = \int_{0}^{\infty} \int_{0}^{\infty} a(z_1,z_2) \ \nu(\mathrm{d}z_1) \ \nu(\mathrm{d}z_2) \nonumber
\\& = \int \int_{\Psi^{-1}((0, \infty )^2)} a(\Psi(u, \theta)) \det(J_{\Psi}(u, \theta)) \ \nu(\mathrm{d}u) \ \nu(\mathrm{d}\theta) \nonumber
\\&  = \int_{0}^{\infty} \int_{0}^{\infty} u^{\beta_1} \theta^{\beta_2} u^{\beta_2} l(u, \theta u) u \  \nu(\mathrm{d}u) \ \nu(\mathrm{d}\theta) \nonumber
\\&  = \int_{0}^{\infty} \int_{0}^{\infty} u^{\beta_1+\beta_2 +1 } \theta^{\beta_2} l(u, \theta u) \ \nu(\mathrm{d}u) \  \nu(\mathrm{d}\theta).
\label{Chapter_Riskmeasures_Eq1_Exp_Damage}
\end{align}
\Tb{Let $h_{\mathrm{S}}=\sqrt{(\Mb{x}_2-\Mb{x}_1)^{'} \Sigma^{-1} (\Mb{x}_2-\Mb{x}_1)}= \| \Mb{x}_2 - \Mb{x}_1 \|_{\Sigma}$.}
Equation (4) in \cite{padoan2010likelihood} gives that the bivariate density of the Smith random field (at $\Mb{x}_1$ and $\Mb{x}_2$) satisfies, for $z_1, z_2>0$,
\begin{align}
l_{\mathrm{S}}(z_1, z_2) =\exp \left( -\frac{\Phi(w)}{z_1}-\frac{\Phi(v)}{z_2} \right) \times \bigg[ \left( \frac{\Phi(w)}{z_1^2}+\frac{\phi(w)}{h z_1^2}-\frac{\phi(v)}{h z_1 z_2} \right)  &\times \left( \frac{\Phi(v)}{z_2^2}+\frac{\phi(v)}{h z_2^2}-\frac{\phi(w)}{h z_1 z_2} \right) \nonumber
 \\&  +\left( \frac{v \phi(w)}{h^2 z_1^2 z_2}+\frac{w \phi(v)}{h^2 z_1 z_2^2} \right) \bigg],
\label{Chapter_RiskMeasure_Smith_Bivariate_Density}
\end{align}
where 
\Tb{
$$w=\frac{h_{\mathrm{S}}}{2}+\frac{\log \left( z_2 / z_1 \right)}{h_{\mathrm{S}}} 
\quad \mbox{and} \quad v=\frac{h_{\mathrm{S}}}{2}-\frac{\log \left( z_2/z_1 \right)}{h_{\mathrm{S}}}.$$
}
\Tb{
It is known that the bivariate distribution function (at $\Mb{x}_1, \Mb{x}_2 \in \mathbb{R}^2$) of the Brown--Resnick random field associated with the variogram $\gamma_W$ is the same as that of the Smith random field with covariance matrix $\Sigma$ when replacing $\| \Mb{x}_2 - \Mb{x}_1 \|_{\Sigma}$ with $\sqrt{\gamma_W(\Mb{x}_2-\Mb{x}_1)}$; compare Equation (1) in \cite{huser2013composite} and Equation (3) in \cite{padoan2010likelihood}. It follows that the bivariate density (at $\Mb{x}_1$ and $\Mb{x}_2$) of the Brown--Resnick field associated with the variogram $\gamma_W$ is given by the right-hand side of \eqref{Chapter_RiskMeasure_Smith_Bivariate_Density} with $h_{\mathrm{S}}$ being replaced with $h=\sqrt{\gamma_W(\Mb{x}_2-\Mb{x}_1)}$.
Therefore,} for any $z_1, \theta>0$,
\begin{align}
& \quad \ l(z_1, \theta z_1) \nonumber \\
& = \exp \left( - \frac{1}{z_1} \left[ \Phi \left( \frac{h}{2}+ \frac{\log(\theta)}{h} \right)+\frac{1}{\theta} \Phi \left( \frac{h}{2} - \frac{\log(\theta)}{h} \right) \right] \right)
\times  \bigg \{ \frac{1}{z_1^4} \bigg[   \Phi \left( \frac{h}{2}+\frac{\log(\theta)}{h} \right) +\frac{1}{h} \phi \left( \frac{h}{2}+ \frac{\log(\theta)}{h} \right) \nonumber
\\& \ \ \  -\frac{1}{h \theta}\phi \left(  \frac{h}{2}-\frac{\log(\theta)}{h} \right) \bigg]
\times \left[ \frac{1}{\theta^2} \Phi \left(  \frac{h}{2}- \frac{\log(\theta)}{h} \right)+\frac{1}{h \theta^2} \phi \left( \frac{h}{2}- \frac{\log(\theta)}{h} \right)-\frac{1}{h \theta} \phi \left( \frac{h}{2}+ \frac{\log(\theta)}{h} \right) \right] \nonumber
\\& \ \ \  + \frac{1}{z_1^3} \left[ \frac{1}{h^2 \theta} \left( \frac{h}{2}- \frac{\log(\theta)}{h} \right) \ \phi \left( \frac{h}{2}+ \frac{\log(\theta)}{h} \right)+\frac{1}{h^2 \theta^2} \left( \frac{h}{2}+ \frac{\log(\theta)}{h} \right) \ \phi \left( \frac{h}{2}-\frac{\log \left( \theta \right)}{h}  \right) \right] \bigg \} \nonumber
\\& =  \exp \left( - \frac{C_1(\theta,h)}{z_1} \right) \left( \frac{C_2(\theta,h)}{z_1^4} + \frac{C_3(\theta,h)}{z_1^3} \right).
\label{Eq_h_u_tu}
\end{align}
\Tb{We denote by $\mathcal{F}_{s_f}$ the Fr\'echet distribution with shape and scale parameters $1$ and $s_f>0$, i.e., if $X \sim \mathcal{F}_{s_f}$,
$\mathbb{P}(X \leq x)= \exp ( - s_f/x ), x > 0.$
Using \eqref{Chapter_Riskmeasures_Eq1_Exp_Damage} and \eqref{Eq_h_u_tu} and the fact that the density of $X \sim \mathcal{F}_{s_f}$ is written
$l_f(x)=s_f/x^2 \exp \left( -s_f/x \right)$}, we obtain
\begin{align}
&\quad \ \mathbb{E} \left[ Z^{(\mathrm{s})}(\Mb{x}_1)^{\beta_1} Z^{(\mathrm{s})}(\Mb{x}_2)^{\beta_2} \right] \nonumber \\
&= \int_{0}^{\infty} \theta^{\beta_2} \left( \int_{0}^{\infty}  u^{\beta_1+\beta_2 +1} \exp \left( - \frac{C_1(\theta,h)}{u} \right) \left( \frac{C_2(\theta,h)}{u^4} + \frac{C_3(\theta,h)}{u^3} \right) \ \nu(\mathrm{d}u) \right) \nu(\mathrm{d}\theta) \nonumber
\\& = \int_{0}^{\infty} C_2(\theta,h) \ \theta^{\beta_2} \left( \int_{0}^{\infty}  u^{\beta_1+\beta_2 -3 } \exp \left( - \frac{C_1(\theta,h)}{u} \right) \  \nu(\mathrm{d}u) \right) \nu(\mathrm{d}\theta) \nonumber
\\& \ \ \  + \int_{0}^{\infty} C_3(\theta,h) \ \theta^{\beta_2} \left( \int_{0}^{\infty}  u^{\beta_1+\beta_2 -2 } \exp \left( - \frac{C_1(\theta,h)}{u} \right) \  \nu(\mathrm{d}u) \right) \nu(\mathrm{d}\theta) \nonumber
\\& = \int_{0}^{\infty} C_2(\theta,h) \ \theta^{\beta_2} \left( \int_{0}^{\infty}  u^{\beta_1+\beta_2 -1 }\  \frac{1}{u^2}\  \exp \left( - \frac{C_1(\theta,h)}{u} \right) \  \nu(\mathrm{d}u) \right) \nu(\mathrm{d}\theta) \nonumber
\\& \ \ \ + \int_{0}^{\infty} C_3(\theta,h) \ \theta^{\beta_2} \left( \int_{0}^{\infty}  u^{\beta_1+\beta_2} \ \frac{1}{u^2} \exp \left( - \frac{C_1(\theta,h)}{u} \right) \  \nu(\mathrm{d}u) \right) \nu(\mathrm{d}\theta) \nonumber
\\& = \int_{0}^{\infty} \frac{C_2(\theta,h)}{C_1(\theta,h)} \ \theta^{\beta_2} \ \mu_{ \beta_1 + \beta_2 -1} \left( \mathcal{F}_{C_1(\theta,h)} \right) \  \nu(\mathrm{d}\theta) + \int_{0}^{\infty} \frac{C_3(\theta,h)}{C_1(\theta,h)} \ \theta^{\beta_2} \ \mu_{ \beta_1 + \beta_2} \left( \mathcal{F}_{C_1(\theta,h)} \right) \  \nu(\mathrm{d}\theta),
\label{Eq_Expectation}
\end{align}
where $\mu_k(F)$ stands for the $k$-th moment of a random variable having $F$ as distribution. It is immediate to see that $\mu_k ( \mathcal{F}_{s_f} ) =s_f^k \  \Gamma(1-k)$,
which, combined with \eqref{Eq_Expectation}, yields the result.
\end{proof}

\medskip

\Tb{
We now aim at studying our spatial dependence measure $\mathcal{D}_{\beta, \eta, \tau, \xi}$. The next result, that we derive from Theorem \ref{Th_Expression_Covariance_Damages} constitutes an important step in that direction.
\begin{Th}
\label{Th_Cov_Maxstab_Real_Marg}
Let $\Mb{x}_1, \Mb{x}_2 \in \mathbb{R}^2$ and $\{ Z(\Mb{x}) \}_{\Mb{x} \in \mathbb{R}^2}$ be a Brown--Resnick field with GEV parameters $\eta_i \in \mathbb{R}$, $\tau_i>0$ and $\xi_i \neq 0$ at $\Mb{x}_i$, $i=1, 2$. Moreover, let $\beta_i \in \mathbb{N}_*$ such that $\beta_i \xi_i < 1/2$, $i=1,2$.
Then, we have
\begin{align*}
\mbox{Cov} \left( Z(\Mb{x}_1)^{\beta_1}, Z(\Mb{x}_2)^{\beta_2} \right)
&= \sum_{k_1=0}^{\beta_1} \sum_{k_2=0}^{\beta_2}  B_{k_1, \beta_1, \eta_1, \tau_1, \xi_1, k_2, \beta_2, \eta_2, \tau_2, \xi_2} \ g_{(\beta_1-k_1)\xi_1, (\beta_2-k_2) \xi_2}^{(\mathrm{s})} \left( \sqrt{\gamma_W(\Mb{x}_2-\Mb{x}_1)} \right)
\\& \quad - \sum_{k_1=0}^{\beta_1} \sum_{k_2=0}^{\beta_2} B_{k_1, \beta_1, \eta_1, \tau_1, \xi_1, k_2, \beta_2, \eta_2, \tau_2, \xi_2} \ \Gamma(1-[\beta_1-k_1]\xi_1) \Gamma(1-[\beta_2-k_2]\xi_2),
\end{align*}
where 
$$ B_{k_1, \beta_1, \eta_1, \tau_1, \xi_1, k_2, \beta_2, \eta_2, \tau_2, \xi_2} = {\beta_1 \choose k_1} \left( \eta_1-\frac{\tau_1}{\xi_1} \right)^{k_1} \left( \frac{\tau_1}{\xi_1} \right)^{\beta_1-k_1} {\beta_2 \choose k_2} \left( \eta_2-\frac{\tau_2}{\xi_2} \right)^{k_2} \left( \frac{\tau_2}{\xi_2} \right)^{\beta_2-k_2}.$$
\end{Th}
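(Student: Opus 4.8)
The plan is to reduce the general-margins statement to the simple max-stable case already settled in Theorem~\ref{Th_Expression_Covariance_Damages}, the bridge being the marginal transformation \eqref{Eq_Link_Maxstb_Simple_Maxstab} together with the binomial theorem. Since $\xi_i \neq 0$, \eqref{Eq_Link_Maxstb_Simple_Maxstab} applied location-wise to a common underlying simple Brown--Resnick field $Z^{(\mathrm{s})}$ associated with $\gamma_W$ lets me write, for $i=1,2$, $Z(\Mb{x}_i) = (\eta_i - \tau_i/\xi_i) + (\tau_i/\xi_i)\, Z^{(\mathrm{s})}(\Mb{x}_i)^{\xi_i}$. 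Because each $\beta_i$ is a positive integer, I would raise this to the power $\beta_i$ and expand with the binomial theorem, obtaining
$$ Z(\Mb{x}_i)^{\beta_i} = \sum_{k_i=0}^{\beta_i} \binom{\beta_i}{k_i} \left( \eta_i - \frac{\tau_i}{\xi_i} \right)^{k_i} \left( \frac{\tau_i}{\xi_i} \right)^{\beta_i - k_i} Z^{(\mathrm{s})}(\Mb{x}_i)^{(\beta_i - k_i)\xi_i}. $$
Thus $Z(\Mb{x}_i)^{\beta_i}$ is a finite linear combination of the powers $Z^{(\mathrm{s})}(\Mb{x}_i)^{(\beta_i-k_i)\xi_i}$, with coefficients whose pairwise products are precisely the constants $B_{k_1, \beta_1, \eta_1, \tau_1, \xi_1, k_2, \beta_2, \eta_2, \tau_2, \xi_2}$ appearing in the statement.

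Before invoking bilinearity of the covariance, I would check that every power appearing has a finite second moment, so that all the covariances below are well defined and the expansion is licit. By Lemma~\ref{Chapriskmeasure_Lem_Frechet_Margins} this amounts to verifying $(\beta_i - k_i)\xi_i < 1/2$ for all $0 \leq k_i \leq \beta_i$. If $\xi_i > 0$ then $(\beta_i - k_i)\xi_i \leq \beta_i \xi_i < 1/2$ by hypothesis, while if $\xi_i < 0$ then $(\beta_i - k_i)\xi_i \leq 0 < 1/2$ since $\beta_i - k_i \geq 0$; in either case the required bound holds, and these are exactly the inequalities that allow Theorem~\ref{Th_Expression_Covariance_Damages} to be applied to each pair of exponents $((\beta_1-k_1)\xi_1, (\beta_2-k_2)\xi_2)$.

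With finiteness secured, I would expand $\mathrm{Cov}(Z(\Mb{x}_1)^{\beta_1}, Z(\Mb{x}_2)^{\beta_2})$ by bilinearity into the double sum
$$ \sum_{k_1=0}^{\beta_1}\sum_{k_2=0}^{\beta_2} B_{k_1, \beta_1, \eta_1, \tau_1, \xi_1, k_2, \beta_2, \eta_2, \tau_2, \xi_2}\, \mathrm{Cov}\left(Z^{(\mathrm{s})}(\Mb{x}_1)^{(\beta_1-k_1)\xi_1}, Z^{(\mathrm{s})}(\Mb{x}_2)^{(\beta_2-k_2)\xi_2}\right), $$
and then substitute the value of each inner covariance supplied by Theorem~\ref{Th_Expression_Covariance_Damages}, namely $g^{(\mathrm{s})}_{(\beta_1-k_1)\xi_1, (\beta_2-k_2)\xi_2}(\sqrt{\gamma_W(\Mb{x}_2-\Mb{x}_1)}) - \Gamma(1-(\beta_1-k_1)\xi_1)\Gamma(1-(\beta_2-k_2)\xi_2)$. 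Splitting the resulting expression into its $g^{(\mathrm{s})}$ part and its product-of-gammas part yields exactly the two double sums in the statement, completing the argument.

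The step I expect to be the only genuine obstacle is the moment bookkeeping of the second paragraph: one must ensure the bound $(\beta_i - k_i)\xi_i < 1/2$ holds uniformly in $k_i$ and for both signs of $\xi_i$, since the binomial expansion produces several distinct exponents, including the constant term $k_i = \beta_i$ (where the power is $0$ and Theorem~\ref{Th_Expression_Covariance_Damages} consistently returns a vanishing contribution). Everything else is a mechanical combination of the binomial theorem, linearity of covariance, and Theorem~\ref{Th_Expression_Covariance_Damages}.
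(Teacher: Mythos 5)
Your proposal is correct and follows essentially the same route as the paper: apply \eqref{Eq_Link_Maxstb_Simple_Maxstab}, expand via the binomial theorem, use bilinearity of covariance, and invoke Theorem~\ref{Th_Expression_Covariance_Damages} term by term. Your explicit verification that $(\beta_i-k_i)\xi_i<1/2$ for both signs of $\xi_i$ is a detail the paper leaves implicit, but it does not change the argument.
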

\begin{proof}
Using \eqref{Eq_Link_Maxstb_Simple_Maxstab} and the binomial theorem, we obtain
\begin{align*}
\mbox{Cov} \left( Z(\Mb{x}_1)^{\beta_1}, Z(\Mb{x}_2)^{\beta_2} \right) = \sum_{k_1=0}^{\beta_1} \sum_{k_2=0}^{\beta_2} & {\beta_1 \choose k_1} \left( \eta_1-\frac{\tau_1}{\xi_1} \right)^{k_1} \left( \frac{\tau_1}{\xi_1} \right)^{\beta_1-k_1} {\beta_2 \choose k_2} \left( \eta_2-\frac{\tau_2}{\xi_2} \right)^{k_2} \left( \frac{\tau_2}{\xi_2} \right)^{\beta_2-k_2} \nonumber \\
& \times \mbox{Cov}  \left(Z^{(\mathrm{s})}(\Mb{x}_1)^{(\beta_1-k_1) \xi_1},  Z^{(\mathrm{s})}(\Mb{x}_2)^{(\beta_2-k_2) \xi_2} \right),
\end{align*}
which directly yields the result by Theorem \ref{Th_Expression_Covariance_Damages}.
\end{proof}
The following is an immediate consequence of Theorem \ref{Th_Cov_Maxstab_Real_Marg}.
\begin{Corr}
Under the same assumptions as in Theorem \ref{Th_Cov_Maxstab_Real_Marg} but with $\eta_1=\eta_2=\eta$, $\tau_1=\tau_2=\tau$, $\xi_1=\xi_2=\xi$ and $\beta_1=\beta_2=\beta$, we have
\Beq
\mbox{Cov} \left( Z(\Mb{x}_1)^{\beta}, Z(\Mb{x}_2)^{\beta} \right)
= g_{\beta, \eta, \tau, \xi} \left( \sqrt{\gamma_W(\Mb{x}_2-\Mb{x}_1)} \right) - \sum_{k_1=0}^{\beta} \sum_{k_2=0}^{\beta} B_{k_1, k_2, \beta, \eta, \tau, \xi} \ \Gamma(1-[\beta-k_1]\xi) \Gamma(1-[\beta-k_2]\xi)
\label{Eq_Cov_Maxstab_Real_Marg_Eq_Coeff}
\Eeq
and
\Beq
\mbox{Var} \left( Z(\Mb{0})^{\beta} \right) = \sum_{k_1=0}^{\beta} \sum_{k_2=0}^{\beta} B_{k_1, k_2, \beta, \eta, \tau, \xi} \left \{ \Gamma(1-\xi[2 \beta - k_1 -k_2])- \Gamma(1-[\beta-k_1]\xi) \Gamma(1-[\beta-k_2]\xi) \right \},
\label{Eq_Var_GEV_beta}
\Eeq
where
$$ B_{k_1, k_2, \beta, \eta, \tau, \xi}= {\beta \choose k_1} {\beta \choose k_2} \left( \eta-\frac{\tau}{\xi} \right)^{k_1+k_2} \left( \frac{\tau}{\xi} \right)^{2\beta-(k_1+k_2)}$$
and
\Beq
g_{\beta, \eta, \tau, \xi}(h) = \sum_{k_1=0}^{\beta} \sum_{k_2=0}^{\beta} B_{k_1, k_2, \beta, \eta, \tau, \xi} \ g_{(\beta-k_1)\xi, (\beta-k_2) \xi}^{(\mathrm{s})} \left( h \right).
\label{Eq_Function_gtilde}
\Eeq
\end{Corr}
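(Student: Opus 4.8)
The plan is to read the claim off Theorem \ref{Th_Cov_Maxstab_Real_Marg} by specializing all parameters to common values and then recognizing the resulting sums through the definitions \eqref{Eq_Def_g_beta1_beta2} and \eqref{Eq_Function_gtilde}; no new analysis is required. First I would set $\eta_1=\eta_2=\eta$, $\tau_1=\tau_2=\tau$, $\xi_1=\xi_2=\xi$ and $\beta_1=\beta_2=\beta$ in the statement of Theorem \ref{Th_Cov_Maxstab_Real_Marg}. Under this specialization the coefficient $B_{k_1,\beta_1,\eta_1,\tau_1,\xi_1,k_2,\beta_2,\eta_2,\tau_2,\xi_2}$ collapses exactly to $B_{k_1,k_2,\beta,\eta,\tau,\xi}=\binom{\beta}{k_1}\binom{\beta}{k_2}(\eta-\tau/\xi)^{k_1+k_2}(\tau/\xi)^{2\beta-(k_1+k_2)}$, so that the first double sum in Theorem \ref{Th_Cov_Maxstab_Real_Marg} becomes $\sum_{k_1=0}^{\beta}\sum_{k_2=0}^{\beta} B_{k_1,k_2,\beta,\eta,\tau,\xi}\, g^{(\mathrm{s})}_{(\beta-k_1)\xi,(\beta-k_2)\xi}\!\left(\sqrt{\gamma_W(\Mb{x}_2-\Mb{x}_1)}\right)$. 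This is precisely $g_{\beta,\eta,\tau,\xi}\!\left(\sqrt{\gamma_W(\Mb{x}_2-\Mb{x}_1)}\right)$ by definition \eqref{Eq_Function_gtilde}, while the second double sum is already in the form subtracted in \eqref{Eq_Cov_Maxstab_Real_Marg_Eq_Coeff}. This establishes the covariance formula.

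For the variance formula, I would evaluate the covariance identity at $\Mb{x}_1=\Mb{x}_2=\Mb{0}$, using $\mathrm{Var}\!\left(Z(\Mb{0})^{\beta}\right)=\mathrm{Cov}\!\left(Z(\Mb{0})^{\beta},Z(\Mb{0})^{\beta}\right)$. Since $\gamma_W(\Mb{0})=0$, the argument of each $g^{(\mathrm{s})}$ vanishes, and the $h=0$ branch of \eqref{Eq_Def_g_beta1_beta2} gives $g^{(\mathrm{s})}_{(\beta-k_1)\xi,(\beta-k_2)\xi}(0)=\Gamma(1-(\beta-k_1)\xi-(\beta-k_2)\xi)=\Gamma(1-\xi[2\beta-k_1-k_2])$. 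Substituting this into the covariance formula and factoring the common coefficient $B_{k_1,k_2,\beta,\eta,\tau,\xi}$ term by term yields \eqref{Eq_Var_GEV_beta}.

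There is essentially no analytic obstacle here: the result is pure bookkeeping, and the only point that deserves a line of verification is that every gamma factor appearing is finite. This follows from the standing hypothesis $\beta\xi<1/2$: for $k_i\in\{0,\dots,\beta\}$ one has $(\beta-k_i)\xi\le\beta\xi<1/2$ when $\xi>0$ and $(\beta-k_i)\xi\le 0<1/2$ when $\xi<0$, so each $g^{(\mathrm{s})}_{(\beta-k_1)\xi,(\beta-k_2)\xi}$ is well defined by Theorem \ref{Th_Expression_Covariance_Damages}, and likewise $\Gamma(1-\xi[2\beta-k_1-k_2])$ is finite because $\xi[2\beta-k_1-k_2]=(\beta-k_1)\xi+(\beta-k_2)\xi<1$. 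Hence both identities follow immediately from Theorem \ref{Th_Cov_Maxstab_Real_Marg}, which is exactly why the corollary can be stated as an immediate consequence.
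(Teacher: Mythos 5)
Your proof is correct and follows exactly the route the paper intends: the paper states the corollary as an immediate consequence of Theorem \ref{Th_Cov_Maxstab_Real_Marg} (giving no written proof), namely specializing the parameters, recognizing the first double sum as $g_{\beta,\eta,\tau,\xi}$ via \eqref{Eq_Function_gtilde}, and obtaining the variance formula from $\gamma_W(\Mb{0})=0$ together with the $h=0$ branch of \eqref{Eq_Def_g_beta1_beta2}. Your additional verification that all gamma factors are finite under $\beta\xi<1/2$ is a harmless (and sensible) supplement.
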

Our dependence measure $\mathcal{D}_{\beta, \eta, \tau, \xi}$ is given by the ratio of the right-hand-sides of \eqref{Eq_Cov_Maxstab_Real_Marg_Eq_Coeff} and \eqref{Eq_Var_GEV_beta}. 
In order to derive useful conclusions about $\mathcal{D}_{\beta, \eta, \tau, \xi}$, we investigate the behaviour of the function $g_{\beta, \eta, \tau, \xi}$ defined in \eqref{Eq_Function_gtilde}. For this purpose, we first need the following result.
}
\begin{Prop}
\label{Prop_Generalization_Dhaene}
For a random vector $\Mb{X}=(X_1, X_2)'$, its distribution function is denoted $F_{X_1, X_2}$. Let $\Mb{X}=(X_1, X_2)'$ and $\Mb{Y}=(Y_1, Y_2)'$ be random vectors having the same margins. Then, we have
$$ 
F_{X_1, X_2}(z_1, z_2) < F_{Y_1, Y_2}(z_1, z_2) \ \mbox{ for all } z_1, z_2 > 0 \Longrightarrow  \Tb{\mathrm{Cov}(f_1(X_1), f_2(X_2)) < \mathrm{Cov}(f_1(Y_1), f_2(Y_2))},
$$
for all \Tb{strictly increasing} functions $f_1: \Tb{(0, \infty)} \to \Tb{\mathbb{R}}$ and $f_2: \Tb{(0, \infty)} \to \Tb{\mathbb{R}}$, provided the covariances exist.
\end{Prop}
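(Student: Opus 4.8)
The plan is to reduce the comparison of the two covariances to a pointwise comparison of the joint distribution functions of the transformed vectors, via Hoeffding's covariance identity. Write $U_i = f_i(X_i)$ and $V_i = f_i(Y_i)$ for $i=1,2$, and let $H_X(s,t) = \mathbb{P}(U_1 \le s, U_2 \le t)$ and $H_Y(s,t) = \mathbb{P}(V_1 \le s, V_2 \le t)$ denote the joint distribution functions of $(U_1,U_2)$ and $(V_1,V_2)$. Since the $f_i$ are strictly increasing and $X_i$ has the same law as $Y_i$, the one-dimensional margins of $H_X$ and $H_Y$ coincide. Hoeffding's covariance formula, valid whenever the covariances are finite, states that
\[
\mathrm{Cov}(U_1,U_2) = \int_{\mathbb{R}} \int_{\mathbb{R}} \left[ H_X(s,t) - \mathbb{P}(U_1 \le s)\,\mathbb{P}(U_2 \le t) \right] \, ds\, dt,
\]
and analogously for $(V_1,V_2)$. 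Because the marginal product terms are identical for the two vectors, subtracting the two identities cancels them and yields
\[
\mathrm{Cov}(f_1(Y_1),f_2(Y_2)) - \mathrm{Cov}(f_1(X_1),f_2(X_2)) = \int_{\mathbb{R}} \int_{\mathbb{R}} \left[ H_Y(s,t) - H_X(s,t) \right] \, ds\, dt,
\]
the two single-vector integrals being absolutely convergent by Hoeffding's theorem (which is where finiteness of the covariances is used), so that they may legitimately be combined into a single integral.

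Next I would show that the integrand is nonnegative everywhere. Fix $(s,t)$. Since $f_1$ is strictly increasing, the set $A_1(s) = \{u>0 : f_1(u) \le s\}$ is a (possibly empty, possibly unbounded) down-interval of $(0,\infty)$, and likewise $A_2(t)$; crucially these sets depend only on $f_1,f_2,s,t$ and not on which vector we consider, so that $H_X(s,t) = \mathbb{P}(X_1 \in A_1(s), X_2 \in A_2(t))$ and $H_Y(s,t) = \mathbb{P}(Y_1 \in A_1(s), Y_2 \in A_2(t))$. When one of the sets is empty or equal to all of $(0,\infty)$ the two probabilities agree (using equality of the margins in the latter case). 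Otherwise each $A_i$ is an interval $(0,q_i]$ or $(0,q_i)$ with $q_i \in (0,\infty)$; expressing the probabilities as $F_{X_1,X_2}$ and $F_{Y_1,Y_2}$ evaluated at $(q_1,q_2)$, or as the appropriate one-sided limits when an endpoint is excluded, the strict hypothesis $F_{X_1,X_2} < F_{Y_1,Y_2}$ on $(0,\infty)^2$ gives $H_Y(s,t) - H_X(s,t) \ge 0$. Hence the displayed difference of covariances is nonnegative.

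Finally, to upgrade to strict inequality I would argue by contradiction. Suppose the difference equals $0$; since the integrand is nonnegative, $H_Y = H_X$ Lebesgue-almost everywhere, and as both are right-continuous bivariate distribution functions they must then coincide everywhere. Using once more that $f_1,f_2$ are strictly increasing, one has $F_{X_1,X_2}(z_1,z_2) = H_X(f_1(z_1),f_2(z_2))$ and $F_{Y_1,Y_2}(z_1,z_2) = H_Y(f_1(z_1),f_2(z_2))$ for all $z_1,z_2>0$, whence $F_{X_1,X_2} \equiv F_{Y_1,Y_2}$ on $(0,\infty)^2$, contradicting the strict hypothesis. Therefore the difference of covariances is strictly positive, which is the claim.

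The delicate points, and the main obstacle, are the bookkeeping for strictly increasing functions that need not be continuous: the distinction between $(0,q_i]$ and $(0,q_i)$ forces one-sided limits and only yields $H_Y - H_X \ge 0$ pointwise, not a strict inequality, so the strict conclusion cannot be read off the integrand directly. This is precisely why the strictness is recovered globally, through the right-continuity-plus-contradiction argument rather than locally. One must also take care that the two Hoeffding integrals converge absolutely so they may be merged into the single integral of $H_Y - H_X$, which is ensured by the assumed finiteness of the covariances.
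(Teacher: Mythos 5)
Your proof is correct and rests on the same key lemma as the paper's: Hoeffding's covariance identity (invoked there as Lemma 1 of Dhaene and Goovaerts, 1996), together with cancellation of the marginal terms because the margins coincide. Where you genuinely diverge is in how strictness is obtained. The paper transfers the hypothesis to the transformed vectors by writing $\mathbb{P}(f_1(X_1) \leq z_1, f_2(X_2) \leq z_2) = \mathbb{P}(X_1 \leq f_1^{-1}(z_1), X_2 \leq f_2^{-1}(z_2))$ and claims the \emph{strict} inequality between the transformed joint distribution functions at every point of $(0,\infty)^2$, then integrates this strictly positive integrand; that step is informal unless the $f_i$ are continuous with full range, which is exactly the bookkeeping issue you identify (preimage sets of the form $(0,q_i)$, or empty, or all of $(0,\infty)$, yield only weak inequality or outright equality). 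Your route concedes only $H_Y - H_X \geq 0$ pointwise and recovers strictness globally: if the integral vanished, a.e.\ equality plus joint right-continuity would force $H_X \equiv H_Y$, hence $F_{X_1,X_2} \equiv F_{Y_1,Y_2}$ on $(0,\infty)^2$, contradicting the hypothesis. This buys robustness to discontinuous strictly increasing $f_i$. You also integrate over $\mathbb{R}^2$ rather than the paper's $(0,\infty)^2$; since the $f_i$ map into $\mathbb{R}$ (and in the paper's own application to GEV margins with $\xi<0$ the transformed variables can take negative values), your domain is the correct one. In short, the two proofs share the same skeleton, but yours is the more careful execution, trading the paper's brevity for an argument that needs no unstated regularity assumptions on $f_1, f_2$.
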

\begin{proof}
The proof is partly inspired from the proof of Theorem 1 in \cite{dhaene1996dependency}. Let $f_1: \Tb{(0, \infty)} \to \Tb{\mathbb{R}}$ and $f_2: \Tb{(0, \infty)} \to \Tb{\mathbb{R}}$ be \Tb{strictly increasing} functions. Assume that, for all $z_1, z_2>0$, 
\Beq
\label{Eq_Majoration_Function_Distribution}
F_{X_1, X_2}(z_1, z_2) < F_{Y_1, Y_2}(z_1, z_2).
\Eeq
We have 
$$ \mathbb{P}(f_1(X_1) \leq z_1, f_2(X_2) \leq z_2)= \mathbb{P} \left( X_1 \leq f_1^{-1}(z_1), X_2 \leq f_2^{-1}(z_2) \right)$$
and the same equality for $\Mb{Y}$. Consequently, since, for all $z_1, z_2>0$, $f_1^{-1}(z_1), f_2^{-1}(z_2)>0$, it follows from 
\eqref{Eq_Majoration_Function_Distribution} that, for all $z_1, z_2>0$,
\Beq
\label{Eq_Majoration_Function_Distribution_Transformed}
\mathbb{P}(f_1(X_1) \leq z_1, f_2(X_2) \leq z_2) < \mathbb{P}(f_1(Y_1) \leq z_1, f_2(Y_2) \leq z_2).
\Eeq
Since $X_1$ and $Y_1$ have the same distribution and the same is true for $X_2$ and $Y_2$, we deduce that
\Beq
\label{Eq_Same_Margins}
f_1(X_1) \overset{d}{=} f_1(Y_1) \quad \mbox{and} \quad f_2(X_2) \overset{d}{=} f_2(Y_2).
\Eeq
For a random variable $\tilde{X}$, we denote by $F_{\tilde{X}}$ its distribution function.
Using \eqref{Eq_Majoration_Function_Distribution_Transformed}, \eqref{Eq_Same_Margins} and  Lemma 1 in \cite{dhaene1996dependency}, we obtain
\begin{align*}
\mathrm{Cov}(f_1(X_1), f_2(X_2)) &=\int_{0}^{\infty} \int_{0}^{\infty} \left( F_{f_1(X_1), f_2(X_2)}(u,v)-F_{f_1(X_1)}(u) F_{f_2(X_2)}(v)) \  \nu(\mathrm{d}u \right) \ \nu(\mathrm{d}v) \\ 
& < \int_{0}^{\infty} \int_{0}^{\infty} \left( F_{f_1(Y_1), f_2(Y_2)}(u,v)-F_{f_1(Y_1)}(u) F_{f_2(Y_2)}(v)) \  \nu(\mathrm{d}u \right) \ \nu(\mathrm{d}v) \\
&= \mathrm{Cov}(f_1(Y_1), f_2(Y_2)).
\end{align*}
\end{proof}
Henceforth, we can show the \Tb{following technical result which will be used below.
\begin{Prop}
\label{Prop_Decrease_gtilde}
For all $\eta \in \mathbb{R}$, $\tau>0$, $\xi \neq 0$ and $\beta \in \mathbb{N}_*$ such that $\beta \xi < 1/2$, the function $g_{\beta, \eta, \tau, \xi}$ defined in \eqref{Eq_Function_gtilde} is strictly decreasing.
\end{Prop}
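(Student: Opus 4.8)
The plan is to reduce the monotonicity of $g_{\beta,\eta,\tau,\xi}$ to a statement about the covariance of powers of the field, and then to invoke Proposition \ref{Prop_Generalization_Dhaene}. By \eqref{Eq_Cov_Maxstab_Real_Marg_Eq_Coeff}, for any $\Mb{x}_1,\Mb{x}_2\in\mathbb{R}^2$ with $h=\sqrt{\gamma_W(\Mb{x}_2-\Mb{x}_1)}$ one has $g_{\beta,\eta,\tau,\xi}(h)=\mathrm{Cov}(Z(\Mb{x}_1)^{\beta},Z(\Mb{x}_2)^{\beta})+c$, where $c$ is the double $\Gamma$-sum in \eqref{Eq_Cov_Maxstab_Real_Marg_Eq_Coeff} and does not depend on $h$. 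Since the GEV margins of $Z$ are the same at every site, they do not depend on $h$ either, so it is equivalent to study the covariance. The hypothesis $\beta\xi<1/2$ guarantees, via Lemma \ref{Chapriskmeasure_Lem_Frechet_Margins} and \eqref{Eq_Link_Maxstb_Simple_Maxstab}, that $Z(\Mb{x})^{\beta}$ has a finite second moment, so this covariance is well defined. It therefore suffices to show that it is strictly decreasing in $h$, i.e. that increasing the variogram distance strictly weakens the bivariate dependence in the covariance sense.

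First I would fix $0<h<h'$ and compare the two bivariate laws sharing the common GEV margins but having dependence parameters $h$ and $h'$. The key step is to establish the pointwise strict ordering $F_{h'}(z_1,z_2)<F_{h}(z_1,z_2)$ of the bivariate distribution functions on $(0,\infty)^2$. Working at the level of the simple field, whose bivariate law is the Hüsler--Reiss distribution $\exp(-V(z_1,z_2;h))$ with $V(z_1,z_2;h)=z_1^{-1}\Phi(a)+z_2^{-1}\Phi(b)$, $a=h/2+h^{-1}\log(z_2/z_1)$ and $b=h/2-h^{-1}\log(z_2/z_1)$ (this is the distribution function corresponding to the density \eqref{Chapter_RiskMeasure_Smith_Bivariate_Density}; see Equation (1) in \cite{huser2013composite}), I would differentiate $V$ in $h$. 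Using the identity $a^2-b^2=2\log(z_2/z_1)$ one gets $z_1^{-1}\phi(a)=z_2^{-1}\phi(b)$, and the two terms of $\partial V/\partial h$ collapse to the single positive quantity $\partial V/\partial h=z_1^{-1}\phi(a)>0$. Hence $V(\cdot\,;h)$ is strictly increasing in $h$, $\exp(-V)$ is strictly decreasing, and the ordering $F_{h'}<F_{h}$ follows.

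Finally I would apply Proposition \ref{Prop_Generalization_Dhaene}. Since the marginal transform \eqref{Eq_Link_Maxstb_Simple_Maxstab} is strictly increasing, applying it in each coordinate preserves the pointwise ordering of the joint distribution functions, so the GEV-margined pairs also satisfy $F_{h'}<F_{h}$; taking $f_1(z)=f_2(z)=z^{\beta}$, strictly increasing on $(0,\infty)$, the Proposition yields $\mathrm{Cov}(Z_{h'}(\Mb{x}_1)^{\beta},Z_{h'}(\Mb{x}_2)^{\beta})<\mathrm{Cov}(Z_{h}(\Mb{x}_1)^{\beta},Z_{h}(\Mb{x}_2)^{\beta})$, i.e. the covariance, and thus $g_{\beta,\eta,\tau,\xi}$, is strictly decreasing. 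I expect the derivative computation of the second paragraph to be the main obstacle, both because it needs the explicit Hüsler--Reiss exponent and because positivity of $\partial V/\partial h$ rests on the fortunate cancellation above; a secondary point requiring care is that $z\mapsto z^{\beta}$ must be applied to positive arguments for Proposition \ref{Prop_Generalization_Dhaene} to apply, which is ensured in the wind setting where $Z$ is positive (equivalently, one applies the Proposition directly to the simple field through the strictly increasing composite of $(\cdot)^{\beta}$ with the transform \eqref{Eq_Link_Maxstb_Simple_Maxstab}).
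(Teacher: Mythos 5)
Your proof is correct and follows essentially the same route as the paper's: establish a strict pointwise ordering in $h$ of the Smith/H\"usler--Reiss bivariate distribution functions by differentiation, then apply Proposition \ref{Prop_Generalization_Dhaene} with the strictly increasing composite of the power $\beta$ and the GEV transform \eqref{Eq_Link_Maxstb_Simple_Maxstab} (exactly the paper's function $f$), and translate back to $g_{\beta,\eta,\tau,\xi}$ via \eqref{Eq_Cov_Maxstab_Real_Marg_Eq_Coeff}. The only difference is presentational: you differentiate the exponent function $V$ and use the cancellation $z_1^{-1}\phi(a)=z_2^{-1}\phi(b)$ to obtain $\partial V/\partial h=z_1^{-1}\phi(a)>0$ in one line, whereas the paper differentiates the distribution function directly and verifies by explicit algebra that the resulting factor $T_2=-\partial V/\partial h$ is negative --- the two computations are identical.
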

\begin{proof}
Let $\{ Z^{(\mathrm{s})}(\Mb{x}) \}_{\Mb{x} \in \mathbb{R}^2}$ be the simple Smith random field with covariance matrix $\Sigma$, \Tb{which is a Brown--Resnick random field associated with the variogram $\gamma_W(\Mb{x})= \| \Mb{x} \|_{\Sigma}$}. Let $\Mb{x}, \Mb{y} \in \mathbb{R}^2$.
Equation (3.1) in \cite{smith1990max} gives that, for all $z_1, z_2>0$,
$$ \mathbb{P}(Z^{(\mathrm{s})}(\Mb{x}) \leq z_1, Z^{(\mathrm{s})}(\Mb{y}) \leq z_2) =\exp \left( -\frac{1}{z_1} \Phi\left( \frac{h}{2} + \frac{1}{h} \log \left( \frac{z_2}{z_1} \right) \right) -\frac{1}{z_2} \Phi\left( \frac{h}{2} + \frac{1}{h} \log \left( \frac{z_1}{z_2} \right) \right) \right),$$
where $h=\| \Mb{y} - \Mb{x} \|_{\Sigma}$.
We immediately obtain that
\Tb{
\Beq
\label{Eq_Derivative_h_Bivariate_Df_Smith}
\frac{\partial \mathbb{P}(Z^{(\mathrm{s})}(\Mb{x}) \leq z_1, Z^{(\mathrm{s})}(\Mb{y}) \leq z_2)}{\partial h} = \exp \left( -\frac{1}{z_1} \Phi\left( \frac{h}{2} + \frac{1}{h} \log \left( \frac{z_2}{z_1} \right) \right) -\frac{1}{z_2} \Phi\left( \frac{h}{2} + \frac{1}{h} \log \left( \frac{z_1}{z_2} \right) \right) \right) T_2,
\Eeq
where
$$ T_2 = \left[-\frac{1}{z_1} \left(\frac{1}{2} - \frac{\log(z_2/z_1)}{h^2} \right) \phi \left( \frac{h}{2}+\frac{\log(z_2/z_1)}{h} \right) - \frac{1}{z_2} \left(\frac{1}{2} + \frac{\log(z_2/z_1)}{h^2} \right) \phi \left( \frac{h}{2}-\frac{\log(z_2/z_1)}{h} \right) \right].
$$
For all $z_1, z_2>0$, we introduce $y=z_2/z_1$, which is positive. We have
\begin{align*}
T_2 &= \frac{1}{z_2} \left[-\frac{z_2}{z_1} \left(\frac{1}{2} - \frac{\log(z_2/z_1)}{h^2} \right) \phi \left( \frac{h}{2}+\frac{\log(z_2/z_1)}{h} \right) - \left(\frac{1}{2} + \frac{\log(z_2/z_1)}{h^2} \right) \phi \left( \frac{h}{2}-\frac{\log(z_2/z_1)}{h} \right) \right]
\\& = \frac{1}{z_2} \left[-y \left(\frac{1}{2} - \frac{\log(y)}{h^2} \right) \phi \left( \frac{h}{2}+\frac{\log(y)}{h} \right) - \left(\frac{1}{2} + \frac{\log(y)}{h^2} \right) \phi \left( \frac{h}{2}-\frac{\log(y)}{h} \right) \right]
\\& = \frac{1}{\sqrt{2 \pi} z_2} \exp\left( -\frac{h^2}{8}-\frac{\log(y)^2}{2h^2} \right) \left[ -y \left(\frac{1}{2} - \frac{\log(y)}{h^2} \right)y^{-1/2} - \left(\frac{1}{2} + \frac{\log(y)}{h^2} \right) y^{1/2} \right]
\\& = -\frac{y^{1/2}}{\sqrt{2 \pi} z_2} \exp\left( -\frac{h^2}{8}-\frac{\log(y)^2}{2h^2} \right),
\end{align*}
which is negative. Thus, \eqref{Eq_Derivative_h_Bivariate_Df_Smith} gives that}, for all $\Mb{x}, \Mb{y} \in \mathbb{R}^2$ and $z_1, z_2>0$, 
\Beq
\label{Eq_Derivative_h_Bivariate_Df_Smith_Negative}
\partial \mathbb{P}(Z^{(\mathrm{s})}(\Mb{x}) \leq z_1, Z^{(\mathrm{s})}(\Mb{y}) \leq z_2)/\partial h <0.
\Eeq
Let us consider $h_1>h_2>0$ and $\Mb{x}_1$, $\Mb{x}_2$, $\Mb{x}_3$, $\Mb{x}_4 \in \mathbb{R}^2$ such that 
\Beq
\label{Eq_Def_h1_h2}
h_1=\| \Mb{x}_2-\Mb{x}_1 \|_{\Sigma} \quad \mbox{and} \quad h_2=\| \Mb{x}_4-\Mb{x}_3 \|_{\Sigma}.
\Eeq 
It follows from \eqref{Eq_Derivative_h_Bivariate_Df_Smith_Negative} that, for all $z_1, z_2 >0$,
$F_{Z^{(\mathrm{s})}(\Mb{x}_1), Z^{(\mathrm{s})}(\Mb{x}_2)}(z_1, z_2) < F_{Z^{(\mathrm{s})}(\Mb{x}_3), Z^{(\mathrm{s})}(\Mb{x}_4)}(z_1, z_2)$. 
Since $Z^{(\mathrm{s})}$ is simple max-stable, we have $F_{Z^{(\mathrm{s})}(\Mb{x}_1)}=F_{Z^{(\mathrm{s})}(\Mb{x}_3)}$ and $F_{Z^{(\mathrm{s})}(\Mb{x}_2)}=F_{Z^{(\mathrm{s})}(\Mb{x}_4)}$. 
Now, as $\tau>0$, for $\xi \neq 0$, the function 
$$
\begin{array}{cccc}
f: & (0, \infty) & \to & \mathbb{R} \\
 & z & \mapsto & \left[ \left( \eta-\tau/\xi \right) + \tau z^{\xi}/\xi \right]^{\beta}
\end{array}
$$
is strictly increasing. 
Hence, letting 
$$ Z(\Mb{x}) = \left( \eta-\frac{\tau}{\xi} \right) + \frac{\tau}{\xi} Z^{(\mathrm{s})}(\Mb{x})^{\xi}, \quad \Mb{x} \in \mathbb{R}^2,$$
Proposition \ref{Prop_Generalization_Dhaene} yields
\Beq
\label{Eq_Order_Expectation}
\mathrm{Cov} \left( Z(\Mb{x}_1)^{\beta}, Z(\Mb{x}_2)^{\beta} \right) < \mathrm{Cov} \left( Z(\Mb{x}_3)^{\beta}, Z(\Mb{x}_4)^{\beta} \right).
\Eeq
Furthermore, we know from \eqref{Eq_Cov_Maxstab_Real_Marg_Eq_Coeff} that, for all $\Mb{x}, \Mb{y} \in \mathbb{R}^2$ satisfying $\| \Mb{y}-\Mb{x} \|_{\Sigma}=h$, 
\Beq
\label{Eq_g_Expectation}
\mathrm{Cov} \left( Z(\Mb{x})^{\beta}, Z(\Mb{y})^{\beta} \right)=g_{\beta, \eta, \tau, \xi}(h)-\sum_{k_1=0}^{\beta} \sum_{k_2=0}^{\beta} B_{k_1, k_2, \beta, \eta, \tau, \xi} \ \Gamma(1-[\beta-k_1]\xi) \Gamma(1-[\beta-k_2]\xi).
\Eeq
Finally, the combination of \eqref{Eq_Def_h1_h2}, \eqref{Eq_Order_Expectation} and \eqref{Eq_g_Expectation} gives that
$g_{\beta, \eta, \tau, \xi}(h_1)<g_{\beta, \eta, \tau, \xi}(h_2)$, showing the result. 
\end{proof}
The two following propositions especially give the behaviour of the function $g_{\beta, \eta, \tau, \xi}$ around $0$ and at $\infty$.
\begin{Prop}
\label{Prop_Lim_gtildebeta_hto0}
For all $\eta \in \mathbb{R}$, $\tau>0$, $\xi \neq 0$ and $\beta \in \mathbb{N}_*$ such that $\beta \xi < 1/2$, the function $g_{\beta, \eta, \tau, \xi}$ defined in \eqref{Eq_Function_gtilde} satisfies
\Beq
\label{Eq_Lim_gtildebeta_hto0}
\lim_{h \to 0} g_{\beta, \eta, \tau, \xi}(h) = \sum_{k_1=0}^{\beta} \sum_{k_2=0}^{\beta} B_{k_1, k_2, \beta, \eta, \tau, \xi} \Gamma(1-\xi[2 \beta - k_1 -k_2])
\Eeq
and is continuous everywhere on $[0, \infty)$.
\end{Prop}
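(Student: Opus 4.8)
The plan is to reduce everything to the single building block $g_{\beta_1, \beta_2}^{(\mathrm{s})}$ and then exploit the probabilistic interpretation supplied by Theorem \ref{Th_Expression_Covariance_Damages}. By \eqref{Eq_Function_gtilde}, $g_{\beta, \eta, \tau, \xi}$ is a finite linear combination, with $h$-independent coefficients $B_{k_1, k_2, \beta, \eta, \tau, \xi}$, of the functions $g_{(\beta-k_1)\xi, (\beta-k_2)\xi}^{(\mathrm{s})}$; since continuity and limits are preserved under finite linear combinations, it suffices to prove, for every pair $\beta_1, \beta_2 < 1/2$, that $g_{\beta_1, \beta_2}^{(\mathrm{s})}$ is continuous on $[0, \infty)$ and that $\lim_{h \to 0^+} g_{\beta_1, \beta_2}^{(\mathrm{s})}(h) = \Gamma(1-\beta_1-\beta_2)$. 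Granting this, the limit of the summand indexed by $(k_1, k_2)$ is $\Gamma(1-(\beta-k_1)\xi-(\beta-k_2)\xi) = \Gamma(1-\xi[2\beta-k_1-k_2])$, which reproduces exactly the right-hand side of \eqref{Eq_Lim_gtildebeta_hto0}, and summing the $(\beta+1)^2$ terms yields continuity of $g_{\beta, \eta, \tau, \xi}$ as well.

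First I would recast $g_{\beta_1, \beta_2}^{(\mathrm{s})}(h)$ as an expectation. Combining Theorem \ref{Th_Expression_Covariance_Damages} with $\mathbb{E}[Z^{(\mathrm{s})}(\Mb{x}_i)^{\beta_i}] = \Gamma(1-\beta_i)$ (Lemma \ref{Chapriskmeasure_Lem_Frechet_Margins}), one gets, for any simple Smith field and any $\Mb{x}_1, \Mb{x}_2$ with $\| \Mb{x}_2 - \Mb{x}_1 \|_{\Sigma} = h$,
\Beq
g_{\beta_1, \beta_2}^{(\mathrm{s})}(h) = \mathbb{E}\left[ Z^{(\mathrm{s})}(\Mb{x}_1)^{\beta_1} Z^{(\mathrm{s})}(\Mb{x}_2)^{\beta_2} \right].
\Eeq
The explicit bivariate distribution function recalled in the proof of Proposition \ref{Prop_Decrease_gtilde} is jointly continuous in $(h, z_1, z_2)$ for $h > 0$ and, as $h \to 0^+$, converges pointwise to $\exp(-1/\min(z_1, z_2))$, the distribution function of the completely dependent pair $(Z^{(\mathrm{s})}, Z^{(\mathrm{s})})$. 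Hence $(Z^{(\mathrm{s})}(\Mb{x}_1), Z^{(\mathrm{s})}(\Mb{x}_2))$ converges in distribution as $h \to h_0$ for every $h_0 \in [0, \infty)$, and, since $(z_1, z_2) \mapsto z_1^{\beta_1} z_2^{\beta_2}$ is continuous on $(0, \infty)^2$, so does $Z^{(\mathrm{s})}(\Mb{x}_1)^{\beta_1} Z^{(\mathrm{s})}(\Mb{x}_2)^{\beta_2}$ by the continuous mapping theorem.

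The remaining and only delicate point, which I expect to be the main obstacle, is to upgrade this convergence in distribution to convergence of the expectations, i.e. to establish uniform integrability of the family $\{ Z^{(\mathrm{s})}(\Mb{x}_1)^{\beta_1} Z^{(\mathrm{s})}(\Mb{x}_2)^{\beta_2} \}_{h > 0}$. The key observation is that both marginals are standard Fréchet for every $h$, so any moment bound derived from the margins is automatically uniform in $h$. As $\beta_1, \beta_2 < 1/2$, one can choose $p > 1$ with $2p\beta_1 < 1$ and $2p\beta_2 < 1$ (the constraint being vacuous whenever some $\beta_i \le 0$), and the Cauchy--Schwarz inequality then gives
\Beq
\mathbb{E}\left[ \left| Z^{(\mathrm{s})}(\Mb{x}_1)^{\beta_1} Z^{(\mathrm{s})}(\Mb{x}_2)^{\beta_2} \right|^{p} \right] \le \sqrt{ \mathbb{E}\left[ Z^{(\mathrm{s})}(\Mb{x}_1)^{2p\beta_1} \right] } \, \sqrt{ \mathbb{E}\left[ Z^{(\mathrm{s})}(\Mb{x}_2)^{2p\beta_2} \right] } = \sqrt{ \Gamma(1-2p\beta_1) \, \Gamma(1-2p\beta_2) },
\Eeq
a finite bound independent of $h$ by Lemma \ref{Chapriskmeasure_Lem_Frechet_Margins}. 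A uniform $L^p$ bound with $p > 1$ yields uniform integrability, whence $\mathbb{E}[ Z^{(\mathrm{s})}(\Mb{x}_1)^{\beta_1} Z^{(\mathrm{s})}(\Mb{x}_2)^{\beta_2} ]$ is continuous in $h$ throughout $[0, \infty)$; letting $h \to 0^+$ and using $\mathbb{E}[(Z^{(\mathrm{s})})^{\beta_1+\beta_2}] = \Gamma(1-\beta_1-\beta_2)$ (Lemma \ref{Chapriskmeasure_Lem_Frechet_Margins} again, valid since $\beta_1+\beta_2 < 1$) identifies the limit. A purely analytic alternative would apply dominated convergence directly inside the defining integral \eqref{Eq_Def_g_beta1_beta2}, but controlling the integrand uniformly near $h = 0$ looks appreciably more cumbersome than the probabilistic route above.
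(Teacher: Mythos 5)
Your proof is correct, but it takes a genuinely different route from the paper's. The paper works directly with the GEV-margin field: it takes $Z$ to be the Smith field with $\Sigma=I_2$, notes that $Z^{\beta}$ is second-order stationary (Lemma \ref{Chapriskmeasure_Lem_Frechet_Margins} gives the finite second moment), and invokes sample-continuity of the Smith field to deduce, via the argument of Proposition 1 in \cite{koch2017TCL}, that $Z^{\beta}$ is continuous in quadratic mean; continuity of its covariance function at the origin then yields \eqref{Eq_Lim_gtildebeta_hto0} through \eqref{Eq_Cov_Maxstab_Real_Marg_Eq_Coeff} and \eqref{Eq_Var_GEV_beta}, and continuity at $h>0$ follows from the classical fact that a stationary covariance function continuous at the origin is continuous everywhere. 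You instead decompose $g_{\beta, \eta, \tau, \xi}$ into the simple-margin blocks $g^{(\mathrm{s})}_{(\beta-k_1)\xi,(\beta-k_2)\xi}$ (all exponents are indeed $<1/2$ whatever the sign of $\xi$), rewrite each block as the bivariate moment $\mathbb{E}\left[ Z^{(\mathrm{s})}(\Mb{x}_1)^{\beta_1} Z^{(\mathrm{s})}(\Mb{x}_2)^{\beta_2} \right]$ of the Smith field, and obtain continuity at every $h_0 \geq 0$ from weak convergence of the explicit bivariate distribution function (whose $h \to 0^+$ limit is the comonotone law) combined with uniform integrability, the latter supplied by your Cauchy--Schwarz bound, which depends only on the standard Fr\'echet margins and is therefore uniform in $h$. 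This is essentially the weak-convergence-plus-uniform-integrability technique that the paper itself uses to prove Proposition \ref{Prop_Continuity_Cov_xi_0} (via Theorem 3.5 of \cite{billingsley1999convergence}), so your argument is fully in the paper's spirit, just deployed at a different spot. What the paper's route buys is brevity, at the price of importing sample-path and quadratic-mean continuity results from \cite{koch2017TCL} and the $L^2$-theory of stationary fields; what your route buys is self-containedness (only the explicit Smith distribution function, Lemma \ref{Chapriskmeasure_Lem_Frechet_Margins}, and standard uniform-integrability facts), a uniform treatment of $h_0=0$ and $h_0>0$ rather than an origin/non-origin dichotomy, and a slightly stronger intermediate conclusion, namely continuity on $[0,\infty)$ of every building block $g^{(\mathrm{s})}_{\beta_1,\beta_2}$ with $\beta_1, \beta_2 < 1/2$, which the paper's Appendix \ref{Sec_Appendix_Maxstable} only asserts (for the diagonal case $g^{(\mathrm{s})}_{\beta}$) with a ``very similar proofs'' remark.
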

\begin{proof}
Let $\{ Z(\Mb{x}) \}_{\Mb{x} \in \mathbb{R}^2}$ be the Smith random field with covariance matrix $\Sigma=I_2$, where $I_2$ is the identity matrix in dimension 2, and with GEV parameters $\eta$, $\tau$ and $\xi$ . The field $Z^{\beta}$ is stationary by stationarity of $Z$. In addition, as $\beta \xi < 1/2$, we know from Lemma \ref{Chapriskmeasure_Lem_Frechet_Margins} that $Z^{\beta}$ has a finite second moment. Accordingly, $Z^{\beta}$ is second-order stationary. Moreover, since the Smith random field is sample-continuous, it immediately follows that $Z^{\beta}$ is sample-continuous and thus, using the same arguments as in the proof of Proposition 1 in \cite{koch2017TCL}, that it is continuous in quadratic mean. Hence, the covariance function of the field $Z^{\beta}$ is continuous at the origin. This implies, using \eqref{Eq_Cov_Maxstab_Real_Marg_Eq_Coeff}, that
\begin{align*}
\lim_{\Mb{x} \to \Mb{0}} \mathrm{Cov} \left( Z(\Mb{0})^{\beta}, Z(\Mb{x})^{\beta} \right) &=\lim_{\Mb{x} \to \Mb{0}} \left( g_{\beta, \eta, \tau, \xi} \left( \| \Mb{x} \| \right)- \sum_{k_1=0}^{\beta} \sum_{k_2=0}^{\beta} B_{k_1, k_2, \beta, \eta, \tau, \xi} \ \Gamma(1-[\beta-k_1]\xi) \Gamma(1-[\beta-k_2]\xi) \right) \nonumber
\\& = \mbox{Var} \left( Z(\Mb{0})^{\beta} \right), 
\end{align*}
which, combined with \eqref{Eq_Var_GEV_beta}, yields \eqref{Eq_Lim_gtildebeta_hto0}.
This easily gives $\lim_{h \to 0}  g_{\beta, \eta, \tau, \xi}(h)= g_{\beta, \eta, \tau, \xi}(0)$, which implies that $g_{\beta, \eta, \tau, \xi}$ is continuous at $h=0$. The continuity of $g_{\beta, \eta, \tau, \xi}$ at any $h>0$ comes from the fact that the covariance function of a field which is second-order stationary can be discontinuous only at the origin.
\end{proof}
\begin{Prop}
\label{Prop_Lim_gtilde_infty}
For all $\eta \in \mathbb{R}$, $\tau>0$, $\xi \neq 0$ and $\beta \in \mathbb{N}_*$ such that $\beta \xi < 1/2$, the function $g_{\beta, \eta, \tau, \xi}$ defined in \eqref{Eq_Function_gtilde} satisfies
\Beq
\label{Eq_Lim_gtilde_infty}
\lim_{h \to \infty} g_{\beta, \eta, \tau, \xi}(h)=\sum_{k_1=0}^{\beta} \sum_{k_2=0}^{\beta} B_{k_1, k_2, \beta, \eta, \tau, \xi} \ \Gamma(1-[\beta-k_1]\xi) \Gamma(1-[\beta-k_2]\xi).
\Eeq
\end{Prop}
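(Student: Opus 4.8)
The plan is to reduce the statement, via the linear decomposition \eqref{Eq_Function_gtilde}, to a single limit for the simple-field building block $g_{\beta_1, \beta_2}^{(\mathrm{s})}$. Since $g_{\beta, \eta, \tau, \xi}(h) = \sum_{k_1=0}^{\beta}\sum_{k_2=0}^{\beta} B_{k_1, k_2, \beta, \eta, \tau, \xi}\, g_{(\beta-k_1)\xi, (\beta-k_2)\xi}^{(\mathrm{s})}(h)$ and the coefficients $B_{k_1, k_2, \beta, \eta, \tau, \xi}$ do not depend on $h$, it suffices to prove that $\lim_{h \to \infty} g_{\beta_1, \beta_2}^{(\mathrm{s})}(h) = \Gamma(1-\beta_1)\Gamma(1-\beta_2)$ for every $\beta_1, \beta_2 < 1/2$; since each exponent $(\beta-k_i)\xi$ is at most $\beta\xi<1/2$ when $\xi>0$ and nonpositive when $\xi<0$, all the building blocks meet this constraint, and comparing the resulting limit with the right-hand side of \eqref{Eq_Lim_gtilde_infty} then closes the argument by linearity. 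First I would fix such $\beta_1, \beta_2$ and realize an arbitrary distance $h$ exactly as in the proofs of Propositions \ref{Prop_Decrease_gtilde} and \ref{Prop_Lim_gtildebeta_hto0}, namely through the simple Smith field with $\Sigma = I_2$, for which $\sqrt{\gamma_W(\Mb{x}_2 - \Mb{x}_1)} = \| \Mb{x}_2 - \Mb{x}_1 \|$ can be set equal to $h$, which tends to $\infty$ as the two sites move apart.

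By Theorem \ref{Th_Expression_Covariance_Damages}, $g_{\beta_1, \beta_2}^{(\mathrm{s})}(h) = \mathrm{Cov}(Z^{(\mathrm{s})}(\Mb{x}_1)^{\beta_1}, Z^{(\mathrm{s})}(\Mb{x}_2)^{\beta_2}) + \Gamma(1-\beta_1)\Gamma(1-\beta_2)$, so the claim amounts to showing that this covariance vanishes as $h \to \infty$. The natural mechanism is asymptotic independence: using the explicit bivariate distribution function of the Smith field recalled in the proof of Proposition \ref{Prop_Decrease_gtilde}, each factor $\Phi(h/2 \pm \log(z_2/z_1)/h) \to 1$ as $h \to \infty$, so the bivariate law of $(Z^{(\mathrm{s})}(\Mb{x}_1), Z^{(\mathrm{s})}(\Mb{x}_2))$ converges to the product of two standard Fréchet laws. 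Hence the pair $(Z^{(\mathrm{s})}(\Mb{x}_1)^{\beta_1}, Z^{(\mathrm{s})}(\Mb{x}_2)^{\beta_2})$ and, by the continuous mapping theorem, the product $Z^{(\mathrm{s})}(\Mb{x}_1)^{\beta_1} Z^{(\mathrm{s})}(\Mb{x}_2)^{\beta_2}$ converge in distribution to the corresponding quantities built from two independent standard Fréchet variables, whose product has expectation $\Gamma(1-\beta_1)\Gamma(1-\beta_2)$ by independence and Lemma \ref{Chapriskmeasure_Lem_Frechet_Margins}.

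The main obstacle is that convergence in distribution alone does not force the product moments to converge; I would therefore establish uniform integrability of the family $\{ Z^{(\mathrm{s})}(\Mb{x}_1)^{\beta_1} Z^{(\mathrm{s})}(\Mb{x}_2)^{\beta_2} \}_{h}$. Because $\beta_1, \beta_2 < 1/2$, one can pick $\delta > 0$ with $2\beta_i(1+\delta) < 1$ for $i = 1, 2$; then the Cauchy--Schwarz inequality gives $\mathbb{E}\big[(Z^{(\mathrm{s})}(\Mb{x}_1)^{\beta_1} Z^{(\mathrm{s})}(\Mb{x}_2)^{\beta_2})^{1+\delta}\big] \leq \Gamma(1-2\beta_1(1+\delta))^{1/2}\, \Gamma(1-2\beta_2(1+\delta))^{1/2}$, where the two marginal moments are computed from Lemma \ref{Chapriskmeasure_Lem_Frechet_Margins} using that each margin is standard Fréchet. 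This bound is finite and independent of $h$, so the $(1+\delta)$-th moments are uniformly bounded and the (nonnegative) family is uniformly integrable. Combining uniform integrability with the convergence in distribution from the previous step yields $\mathbb{E}[Z^{(\mathrm{s})}(\Mb{x}_1)^{\beta_1} Z^{(\mathrm{s})}(\Mb{x}_2)^{\beta_2}] \to \Gamma(1-\beta_1)\Gamma(1-\beta_2)$, whence the covariance tends to $0$ and $g_{\beta_1, \beta_2}^{(\mathrm{s})}(h) \to \Gamma(1-\beta_1)\Gamma(1-\beta_2)$, as required. A fully self-contained alternative would be to insert the pointwise limits $C_1(\theta,h) \to 1 + 1/\theta$, $C_2(\theta,h) \to \theta^{-2}$, $C_3(\theta,h) \to 0$ directly into the integral \eqref{Eq_Def_g_beta1_beta2} and justify passage to the limit by dominated convergence, but identifying a $\theta$-integrable, $h$-uniform dominating function and then evaluating the remaining integral to $\Gamma(1-\beta_1)\Gamma(1-\beta_2)$ is more cumbersome than the probabilistic route above.
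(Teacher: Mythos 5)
Your proof is correct, but it takes a genuinely different route from the paper's. The paper's own proof is indirect: it takes $Z$ to be the Smith field with $\Sigma=I_2$ and GEV margins, invokes the fact (established via the proof of Theorem \ref{TCL_Wind}, which rests on the CLT for functions of the Smith field in \cite{koch2017TCL}) that $Z^{\beta}$ satisfies the CLT and hence that $\int_{\mathbb{R}^2} \left| \mathrm{Cov}\left( Z(\Mb{0})^{\beta}, Z(\Mb{x})^{\beta} \right) \right| \nu(\mathrm{d}\Mb{x}) < \infty$, and then combines this integrability over the plane with the strict decrease of $g_{\beta, \eta, \tau, \xi}$ (Proposition \ref{Prop_Decrease_gtilde}): a monotone function whose deviation from the constant $\sum_{k_1}\sum_{k_2} B_{k_1,k_2,\beta,\eta,\tau,\xi}\,\Gamma(1-[\beta-k_1]\xi)\Gamma(1-[\beta-k_2]\xi)$ is integrable over $\mathbb{R}^2$ must have that deviation tend to $0$. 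You instead work directly at the level of the simple building blocks $g^{(\mathrm{s})}_{\beta_1,\beta_2}$: asymptotic independence read off from the explicit Smith bivariate distribution function, upgraded to convergence of the mixed moment via a Cauchy--Schwarz moment bound, uniform integrability, and Billingsley's Theorem 3.5 --- incidentally the same toolkit the paper itself deploys to prove Proposition \ref{Prop_Continuity_Cov_xi_0}, so the ingredients are fully within the paper's scope. What your route buys: it needs neither the CLT machinery nor the monotonicity of Proposition \ref{Prop_Decrease_gtilde}, it avoids the paper's somewhat awkward forward reference to a proof appearing later in the text, and it directly yields the more general limit $\lim_{h\to\infty} g^{(\mathrm{s})}_{\beta_1,\beta_2}(h)=\Gamma(1-\beta_1)\Gamma(1-\beta_2)$ for arbitrary $\beta_1,\beta_2<1/2$, which the paper only asserts in Appendix \ref{Sec_Appendix_Maxstable} as provable by ``very similar'' arguments. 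What the paper's route buys: since the CLT and the monotonicity result are needed elsewhere anyway, its proof is only a few lines once that infrastructure is in place. Your treatment of the two delicate points --- checking that every exponent $(\beta-k_i)\xi$ stays below $1/2$ for either sign of $\xi$, and recognizing that convergence in distribution alone does not give convergence of the product moments without uniform integrability --- is exactly right.
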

\begin{proof}
Let $\{ Z(\Mb{x}) \}_{\Mb{x} \in \mathbb{R}^2}$ be the Smith random field with covariance matrix $\Sigma=I_2$, and with GEV parameters $\eta$, $\tau$ and $\xi$. As will be seen in the proof of Theorem \ref{TCL_Wind}, the field $Z^{\beta}$ satisfies the central limit theorem (in a sense specified in Section \ref{Subsec_CLT_Homothety}). This implies (see Section \ref{Subsec_CLT_Homothety}) that 
$$
\int_{\mathbb{R}^2} \left| \mathrm{Cov} \left( Z(\Mb{0})^{\beta}, Z(\Mb{x})^{\beta} \right) \right| \  \nu(\mathrm{d}\Mb{x}) < \infty,
$$
which entails, using \Tb{\eqref{Eq_Cov_Maxstab_Real_Marg_Eq_Coeff}}, that
$$ \int_{\mathbb{R}^2} \left( g_{\beta, \eta, \tau, \xi} \left( \| \Mb{x} \| \right) - \sum_{k_1=0}^{\beta} \sum_{k_2=0}^{\beta} B_{k_1, k_2, \beta, \eta, \tau, \xi} \ \Gamma(1-[\beta-k_1]\xi) \Gamma(1-[\beta-k_2]\xi) \right) \  \nu(\mathrm{d}\Mb{x}) < \infty.$$
Since $g_{\beta, \eta, \tau, \xi}$ is strictly decreasing, this necessarily implies that 
$$ \lim_{h \to \infty} \left( g_{\beta, \eta, \tau, \xi} \left( h \right) - \sum_{k_1=0}^{\beta} \sum_{k_2=0}^{\beta} B_{k_1, k_2, \beta, \eta, \tau, \xi} \ \Gamma(1-[\beta-k_1]\xi) \Gamma(1-[\beta-k_2]\xi) \right)=0,$$
i.e., \eqref{Eq_Lim_gtilde_infty}. 
\end{proof}
}
\Tb{We have assumed throughout this section that $\xi \neq 0$ but, as shown by the next proposition, the case $\xi=0$ is easily recovered by letting $\xi$ tend to $0$ in the expressions above.
\begin{Prop}
\label{Prop_Continuity_Cov_xi_0}
Let $\eta \in \mathbb{R}$, $\tau>0$, $\xi \neq 0$ and define
$$ Z_{\xi}(\Mb{x}) = \eta + \tau (Z^{(\mathrm{s})}(\Mb{x})^{\xi}-1)/\xi \quad \mbox{and} \quad Z_0(\Mb{x}) = \eta + \tau \log( Z^{(\mathrm{s})}(\Mb{x})), \quad \Mb{x} \in \mathbb{R}^2,$$ 
where $Z^{(\mathrm{s})}$ is a simple max-stable random field. Let $\beta \in \mathbb{N}_*$ such that there exists $\varepsilon >0$ satisfying $2 \beta \xi (1+\varepsilon) <1$.
Then, we have, for all $\Mb{x}_1, \Mb{x}_2 \in \mathbb{R}^2$,
$$ \lim_{\xi \to 0} \mathrm{Cov} \left( Z_{\xi}(\Mb{x}_1)^{\beta},  Z_{\xi}(\Mb{x}_2)^{\beta} \right) =  \mathrm{Cov} \left( Z_0(\Mb{x}_1)^{\beta},  Z_0(\Mb{x}_2)^{\beta} \right).$$
\end{Prop}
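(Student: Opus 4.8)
The plan is to prove the convergence at the level of the three moments entering the covariance, namely $\mathbb{E}[Z_\xi(\Mb{x}_1)^\beta Z_\xi(\Mb{x}_2)^\beta]$ and the two marginal moments $\mathbb{E}[Z_\xi(\Mb{x}_i)^\beta]$, $i=1,2$, and then to pass to the limit in the identity $\mathrm{Cov}(Z_\xi(\Mb{x}_1)^\beta, Z_\xi(\Mb{x}_2)^\beta) = \mathbb{E}[Z_\xi(\Mb{x}_1)^\beta Z_\xi(\Mb{x}_2)^\beta] - \mathbb{E}[Z_\xi(\Mb{x}_1)^\beta]\,\mathbb{E}[Z_\xi(\Mb{x}_2)^\beta]$. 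Writing $L(\Mb{x}) = \log Z^{(\mathrm{s})}(\Mb{x})$, one has $Z_\xi(\Mb{x}) = \eta + \tau (e^{\xi L(\Mb{x})}-1)/\xi \to \eta + \tau L(\Mb{x}) = Z_0(\Mb{x})$ as $\xi \to 0$, pointwise on the probability space, so the three integrands converge almost surely to their $\xi=0$ counterparts. Since $2\beta\xi(1+\varepsilon)<1$ forces $\beta\xi<1/2$, Lemma~\ref{Chapriskmeasure_Lem_Frechet_Margins} guarantees that all these moments are finite for the $\xi$ at hand, so the statement reduces to justifying an interchange of limit and expectation, which I would do by dominated convergence; the crux is a dominating function valid uniformly for $\xi$ near $0$.

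For the domination I would fix $\xi_0>0$ small enough that $2\beta\xi_0(1+\varepsilon)<1$ and restrict to $0<|\xi|\le\xi_0$. The elementary mean value bound $|(e^{\xi \ell}-1)/\xi| \le |\ell|\,e^{\xi_0|\ell|}$, valid for $|\xi|\le\xi_0$, together with $|\ell| \le e^{\varepsilon\xi_0|\ell|}/(\varepsilon\xi_0)$, yields a constant $c_\varepsilon$ such that $|Z_\xi(\Mb{x})| \le |\eta| + c_\varepsilon\, e^{(1+\varepsilon)\xi_0|L(\Mb{x})|}$, and therefore
\[
\left| Z_\xi(\Mb{x})^\beta \right| \le \left( |\eta| + c_\varepsilon\, e^{(1+\varepsilon)\xi_0|L(\Mb{x})|} \right)^\beta =: G(\Mb{x}),
\]
a bound that does not depend on $\xi$. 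Since $L(\Mb{x})$ is standard Gumbel with $\mathbb{E}[e^{tL(\Mb{x})}]=\Gamma(1-t)<\infty$ for $t<1$ (Lemma~\ref{Chapriskmeasure_Lem_Frechet_Margins} applied with exponent $t$), the moment $\mathbb{E}[e^{s|L(\Mb{x})|}]$ is finite precisely when $s<1$. The single-point bound is then integrable because $\beta(1+\varepsilon)\xi_0<1$, and for the product I would control $G(\Mb{x}_1)G(\Mb{x}_2)$ by the Cauchy--Schwarz inequality, reducing matters to $\mathbb{E}[G(\Mb{x}_i)^2]$, which are finite exactly because $2\beta(1+\varepsilon)\xi_0<1$. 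This is where the hypothesis $2\beta\xi(1+\varepsilon)<1$ is used, and it is the only place a genuine constraint on $\beta$ is needed.

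With $G(\Mb{x}_1)$, $G(\Mb{x}_2)$ and $G(\Mb{x}_1)G(\Mb{x}_2)$ integrable and the almost sure convergence established above, dominated convergence applies to each of the three expectations, giving $\mathbb{E}[Z_\xi(\Mb{x}_i)^\beta]\to\mathbb{E}[Z_0(\Mb{x}_i)^\beta]$ and $\mathbb{E}[Z_\xi(\Mb{x}_1)^\beta Z_\xi(\Mb{x}_2)^\beta]\to\mathbb{E}[Z_0(\Mb{x}_1)^\beta Z_0(\Mb{x}_2)^\beta]$, whence the claimed convergence of the covariances follows immediately. I expect the main obstacle to be precisely the joint integrability of the dominating function for the product term: the simple max-stable values $Z^{(\mathrm{s})}(\Mb{x}_1)$ and $Z^{(\mathrm{s})}(\Mb{x}_2)$ are dependent, so the expectation cannot be factored, and the Cauchy--Schwarz step (which costs the extra factor $2$ in the exponent and uses the $(1+\varepsilon)$ margin to absorb the polynomial factor coming from $|L|$) is what renders the uniform bound integrable without any information on the dependence structure.
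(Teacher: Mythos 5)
Your proof is correct, but it takes a genuinely different route from the paper's. The paper never uses the fact that all the $Z_{\xi}$ are built from the \emph{same} field $Z^{(\mathrm{s})}$: it proves joint convergence in distribution of $\left(Z_{\xi}(\Mb{x}_1)^{\beta}, Z_{\xi}(\Mb{x}_2)^{\beta}\right)'$ by computing bivariate distribution functions through the exponent function $V$ and applying the continuous mapping theorem, establishes uniform integrability of $Z_{\xi}(\Mb{x}_i)^{\beta}$ and of $Y_{\xi}=Z_{\xi}(\Mb{x}_1)^{\beta}Z_{\xi}(\Mb{x}_2)^{\beta}$ from $(1+\varepsilon)$-moment bounds (with the same Cauchy--Schwarz step you use, which is where $2\beta\xi(1+\varepsilon)<1$ enters in both arguments), and then concludes with Theorem 3.5 of Billingsley (convergence in distribution plus uniform integrability implies convergence of means). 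You instead exploit the coupling: $Z_{\xi}(\Mb{x})\to Z_0(\Mb{x})$ almost surely as $\xi \to 0$, so dominated convergence with your explicit, $\xi$-uniform dominating function $G$ finishes the job. Your route is more elementary --- no bivariate distribution functions, no continuous mapping, no uniform-integrability machinery --- and it has the merit of making the uniformity in $\xi$ completely explicit: the paper's appeal to the moment criterion for uniform integrability really requires $\sup_{0<|\xi|\le\xi_0}\mathbb{E}\left[|Y_{\xi}|^{1+\varepsilon}\right]<\infty$, a bound uniform in $\xi$, whereas its displayed computations only check finiteness for each fixed $\xi$; your function $G$ supplies exactly such a uniform control. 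What the paper's distributional argument buys in exchange is independence from the coupling --- it would still apply if $Z_{\xi}$ and $Z_0$ were specified only in law rather than defined pathwise from a common $Z^{(\mathrm{s})}$ --- but for the proposition as stated, your argument is shorter and self-contained.
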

}
\begin{proof}
\Tb{
For $i=1,2$, $Z_{\xi}(\Mb{x}_i)$ follows the GEV with parameters $\eta$, $\tau$ and $\xi$, the density of which we denote by $f$. For $\xi<0$, we easily obtain
$ \mathbb{E}[| Z_{\xi}(\Mb{x}_i)|^{\alpha}] < \infty$ for any $\alpha >0$, and, for $\xi>0$, we have for all $\alpha>0$
$$\mathbb{E}[| Z_{\xi}(\Mb{x}_i)|^{\alpha}] = \int_{\eta-\tau/\xi}^{0} |x|^{\alpha} f(x) \ \nu(\mathrm{d}x) + \int_{0}^{\infty} x^{\alpha} f(x) \ \nu(\mathrm{d}x).$$
It is easy to see that the first integral is finite and that the only possible problem for the existence of the second one is at $\infty$. We have
$$ \int_{0}^{\infty} x^{\alpha} \exp \left( -[1+\xi (x-\eta)/\tau]^{-1/\xi} \right) [1+\xi (x-\eta)/\tau]^{-1/\xi-1} \nu(\mathrm{d}x)=\int_{0}^{1} \left[ \eta + \tau(z^{-\xi}-1)/\xi \right]^{\alpha} \exp(-z) \nu(\mathrm{d}z),$$
where we used the change of variable $z=[1+\xi (x-\eta)/\tau]^{-1/\xi}$.
As $[ \eta + \tau(z^{-\xi}-1)/\xi ] \underset{\xi \to 0}{\sim} \tau z^{-\xi}/\xi$, the previous integral is finite provided $\alpha \xi <1$,
which yields by assumption that $\mathbb{E}[ |Z_{\xi}(\Mb{x}_i)^{\beta}|^{1+\varepsilon}]< \infty$. Now, let $Y_{\xi}=Z_{\xi}(\Mb{x}_1)^{\beta} Z_{\xi}(\Mb{x}_2)^{\beta}$, $\xi \neq 0$, and $Y_{0}=Z_{0}(\Mb{x}_1)^{\beta} Z_{0}(\Mb{x}_2)^{\beta}$. 
By Cauchy-Schwarz inequality,
$$
\mathbb{E}\left[ \left| Y_{\xi} \right|^{1+\varepsilon} \right] \leq \sqrt{\mathbb{E} \left[ \left| Z_{\xi}(\Mb{x}_1) \right|^{2\beta(1+\varepsilon)}\right]} \sqrt{\mathbb{E} \left[ \left| Z_{\xi}(\Mb{x}_2) \right|^{2\beta(1+\varepsilon)}\right]},
$$
which gives $\mathbb{E}[ | Y_{\xi} |^{1+\varepsilon}]< \infty$. It follows from \citet[][p.31]{billingsley1999convergence} that the $(Z_{\xi}(\Mb{x}_1))_{\xi}$, $(Z_{\xi}(\Mb{x}_2))_{\xi}$ and $(Y_{\xi})_{\xi}$ are uniformly integrable for $\xi$ around $0$.}

\Tb{
Now, it is well-known that $Z_{\xi}(\Mb{x}_i) \overset{d}{\to} Z_{0}(\Mb{x}_i)$, $i=1, 2$, which implies by the continuous mapping theorem that $Z_{\xi}(\Mb{x}_i)^{\beta} \overset{d}{\to} Z_{0}(\Mb{x}_i)^{\beta}$. Moreover, for any $z_1, z_2 \in \mathbb{R}$,
\begin{align*}
\mathbb{P} \left( \left[Z^{(\mathrm{s})}(\Mb{x}_1)-1\right]/\xi \leq z_1, \left[Z^{(\mathrm{s})}(\Mb{x}_2)-1 \right]/\xi \leq z_2 \right) &= \mathbb{P} \left( Z^{(\mathrm{s})}(\Mb{x}_1) \leq (1+\xi z_1)^{1/\xi}, Z^{(\mathrm{s})}(\Mb{x}_2) \leq (1+\xi z_2)^{1/\xi} \right) \\
&= \exp \left( -V \left([1+\xi z_1]^{1/\xi}, [1+\xi z_2]^{1/\xi} \right) \right),
\end{align*}
and 
$$ \mathbb{P} \left( \log Z^{(\mathrm{s})}(\Mb{x}_1) \leq z_1, \log Z^{(\mathrm{s})}(\Mb{x}_2) \leq z_2 \right) = \exp(-V(\exp(z_1), \exp(z_2)),$$
where $V$ is the exponent function of $(Z^{(\mathrm{s})}(\Mb{x}_1), Z^{(\mathrm{s})}(\Mb{x}_2))'$.
Thus,
$$ \lim_{\xi \to 0} \mathbb{P} \left( \left[Z^{(\mathrm{s})}(\Mb{x}_1)-1\right]/\xi \leq z_1, \left[Z^{(\mathrm{s})}(\Mb{x}_2)-1\right]/\xi \leq z_2 \right)= 
\mathbb{P} \left( \log Z^{(\mathrm{s})}(\Mb{x}_1) \leq z_1, \log Z^{(\mathrm{s})}(\Mb{x}_2) \leq z_2 \right),$$
and thus $$\left( \left[Z^{(\mathrm{s})}(\Mb{x}_1)-1\right]/\xi, \left[Z^{(\mathrm{s})}(\Mb{x}_2)-1\right]/\xi \right)' \overset{d}{\to} \left( \log Z^{(\mathrm{s})}(\Mb{x}_1), \log Z^{(\mathrm{s})}(\Mb{x}_2) \right)'.$$
Consequently, the continuous mapping theorem yields 
$$(Z_{\xi}(\Mb{x}_1)^{\beta}, Z_{\xi}(\Mb{x}_2)^{\beta})' \overset{d}{\to} (Z_{0}(\Mb{x}_1)^{\beta}, Z_{0}(\Mb{x}_2)^{\beta})',$$
and hence, applied again, $Y_{\xi} \overset{d}{\to} Y_{0}$.
Finally, Theorem 3.5 in \cite{billingsley1999convergence} yields that
$\lim_{\xi \to 0} \mathbb{E}(Z_{\xi}(\Mb{x}_i)) = \mathbb{E}(Z_{0}(\Mb{x}_i))$, $i=1,2$ and $\lim_{\xi \to 0} \mathbb{E}(Y_{\xi})=\mathbb{E}(Y_{0})$. The result follows immediately.}
\end{proof}
\Tb{Using similar arguments, we can show that $\lim_{\xi \to 0} \mathrm{Var}(Z_{\xi}(\Mb{x}_i)^{\beta}=\mathrm{Var}(Z_{0}(\Mb{x}_i)^{\beta}$, $i=1,2$, which yields, for any $\Mb{x}_1, \Mb{x}_2 \in \mathbb{R}^2$,
$$ \lim_{\xi \to 0} \mathrm{Corr} \left( Z_{\xi}(\Mb{x}_1)^{\beta},  Z_{\xi}(\Mb{x}_2)^{\beta} \right) =  \mathrm{Corr} \left( Z_0(\Mb{x}_1)^{\beta},  Z_0(\Mb{x}_2)^{\beta} \right).$$
}

\subsection{\Tb{Application}}

\Tb{
The expression of $\mathcal{D}_{\beta, \eta, \tau, \xi}(\Mb{x}_1, \Mb{x}_2)$ depends on $\Mb{x}_1$ and $\Mb{x}_2$ through $\gamma_W(\Mb{x}_2 - \Mb{x}_1)$ only; accordingly, we use in the following  the notation $\mathcal{D}_{\beta, \eta, \tau, \xi}(\gamma_W(\Mb{x}_2-\Mb{x}_1))$. Below, we study the behaviour of $\mathcal{D}_{\beta, \eta, \tau, \xi}(\gamma_W(\Mb{x}_2-\Mb{x}_1))$ with respect to $\beta$ and the Euclidean distance $\| \Mb{x}_2-\Mb{x}_1 \|$ for different variograms. As a variogram is a non-negative conditionally negative definite function, it follows from \citet[][Chapter 4, Section 3, Proposition 3.3]{berg1984harmonic}\footnote{In that book, the term ``non negative'' is used for ``conditionally non negative''.} that $d(\Mb{x}_1, \Mb{x}_2)= \sqrt{\gamma_W(\Mb{x}_2-\Mb{x}_1)}$, $\Mb{x}_1, \Mb{x}_2 \in \mathbb{R}^2$, defines a metric. For many common models of isotropic variogram $\gamma_W$, $\gamma_W(\Mb{x}_2-\Mb{x}_1)$ is a strictly increasing function of $\| \Mb{x}_2-\Mb{x}_1 \|$, which implies by \eqref{Eq_Cov_Maxstab_Real_Marg_Eq_Coeff} and Proposition \ref{Prop_Decrease_gtilde} that $\mathcal{D}_{\beta, \eta, \tau, \xi}(\gamma_W(\Mb{x}_2-\Mb{x}_1))$ is a strictly decreasing function of $\| \Mb{x}_2-\Mb{x}_1 \|$; such a decrease of the correlation with the distance seems natural. Moreover, \eqref{Eq_Cov_Maxstab_Real_Marg_Eq_Coeff}, \eqref{Eq_Var_GEV_beta} and \eqref{Eq_Lim_gtildebeta_hto0} give that $\lim_{ \Mb{x}_2-\Mb{x}_1 \to \Mb{0}} \mathcal{D}_{\beta, \eta, \tau, \xi}(\gamma_W(\Mb{x}_2-\Mb{x}_1))=1$. Now, introducing $\mathcal{B}_1= \{ \Mb{x} \in \mathbb{R}^2: \| \Mb{x} \|=1 \}$, for a function $f$ from $\mathbb{R}^2$ to $\mathbb{R}$, $\lim_{\| \Mb{h} \| \to \infty} f(\Mb{h})=\infty$ must be understood as $\lim_{h \to \infty} \inf_{\Mb{u} \in \mathcal{B}_1} \{ f(h \Mb{u}) \}=\infty$. Using \eqref{Eq_Cov_Maxstab_Real_Marg_Eq_Coeff} and \eqref{Eq_Lim_gtilde_infty}, we deduce, provided that $\lim_{\| \Mb{x}_2-\Mb{x}_1 \| \to \infty} \gamma_W(\Mb{x}_2-\Mb{x}_1)=\infty$, that $\lim_{\| \Mb{x}_2-\Mb{x}_1 \| \to \infty}  \mathcal{D}_{\beta, \eta, \tau, \xi}(\gamma_W(\Mb{x}_2-\Mb{x}_1))=0$. Furthermore, the faster the increase of $\gamma_W$ to infinity, the faster the convergence of $\mathcal{D}_{\beta, \eta, \tau, \xi}(\gamma_W(\Mb{x}_2-\Mb{x}_1))$ to $0$. These results are consistent with our expectations.}

\Tb{
We now thoroughly study the evolution of $\mathcal{D}_{\beta, \eta, \tau, \xi}(\gamma_W(\Mb{x}_2-\Mb{x}_1))$ with respect to $\| \Mb{x}_2-\Mb{x}_1 \|$ and $\beta$, for fixed values of the GEV parameters $\eta$, $\tau$ and $\xi$. As we want our study to be closely linked to practice, we choose reasonable values of those GEV parameters. Fitting a generalized Pareto distribution (GPD) to in situ observations over Switzerland, \cite{ceppi2008extreme} obtained a shape parameter $\xi$ ranging from $-0.2$ to $0$. \cite{della2007extreme} fitted the GPD to ERA-40 reanalysis data over Europe during windstorms and also found negative shape parameters, with values between $-0.1$ and $-0.3$ on most of land areas; see their Figure 4.15. More generally, many studies point out that $\xi$ is generally slightly negative, entailing that the distribution of wind speed maxima has a finite right endpoint; when a positive value is obtained, it is never too far from $0$. In the case of annual maxima over Europe, classic values for the location parameter $\eta$ lie between $25$ and $30$ m.s$^{-1}$ and usual values of the scale parameter $\tau$ range from $2.5$ to $3.5$ m.s$^{-1}$. E.g., considering annual maxima wind speeds at 35 weather stations in the Netherlands, \cite{ribatet2013spatial} obtained trend surfaces whose intercepts are about $27$ m.s$^{-1}$ for $\eta$ and $3.25$ m.s$^{-1}$ for $\tau$. We considered the $10$ m wind gust ERA-5 reanalysis data on a rectangle over France from $1.5^{\circ}$ to $4^{\circ}$ longitude and $47.75^{\circ}$ to $49.25^{\circ}$ latitude (basically centred in Paris). Fitting the GEV to annual maxima of these data, the mean estimates computed on the 77 grid points are $\eta=25.72$ m.s$^{-1}$, $\tau = 2.50$ m.s$^{-1}$ and $\xi=-0.14$. Due to these various results, we perform our study with the values $\eta=30$, $\tau=3$ and $\xi=-0.2$ for simplification; the results described below remain qualitatively the same with different values of these parameters. 
}

\Tb{
The integral appearing in the expression of $g_{\beta_1, \beta_2}^{(\mathrm{s})}$ (see \eqref{Eq_Def_g_beta1_beta2}) has no closed form and therefore a numerical approximation is required. For this purpose, we use adaptive quadrature with a relative accuracy of $3 \times 10^{-7}$. Furthermore, we choose the variogram \eqref{Eq_Power_Variogram} with $\kappa=1$ and $\psi=0.5, 1, 1.5, 2$. A value of the smoothness parameter $\psi$ between $0.25$ and $1.1$ seems reasonable for wind speed maxima; e.g, \cite{ribatet2013spatial} obtained $0.24$ $(0.02)$ and on similar data, \cite{einmahl2016m} found $0.40$ $(0.02)$, whereas we get $1.06$ $(0.07)$ on the region described above and $0.81$ $(0.06)$ over a region covering the Ruhr in Germany (the numbers inside the brackets denote the standard deviation). The difference may come from the fact that reanalysis tend to be smoother than in situ observations. Choosing a range $\kappa=1$ does not induce any loss of generality in our study as, should $\kappa$ differ from $1$, the appropriate plots would be the same as below with the values on the x-axis multiplied by $\kappa$. In practical situations, the true value of $\kappa$ should of course be taken. From \eqref{Eq_Variogram_Smith_Field}, we know that the case $\psi=2$ corresponds to the Smith field with $\Sigma=I_2$. Finally, the cases $\psi=1$ and $\psi=1.5$ are intermediate between the two previous settings. In accordance with the discussion above, Figure \ref{Plot_Surface_Corr_4panels} shows that $\mathcal{D}_{\beta, \eta, \tau, \xi}$ decreases from $1$ to $0$ as the Euclidean distance increases, and this at a higher rate for larger values of $\psi$. Especially, the decrease is faster for the Smith field than for all Brown--Resnick fields having $\psi<2$, and if the true value of $\psi$ is close to $0.5$ or even $1$, using the Smith model leads to a serious underestimation of the dependence between damage. The minimum Euclidean distance required for $\mathcal{D}_{\beta, \eta, \tau, \xi}$ to be lower than $0.01$ is about $1000$ for $\psi=0.5$ (not shown), instead of around $6$ for $\psi=2$. Moreover, interestingly, for a given Euclidean distance, $\mathcal{D}_{\beta, \eta, \tau, \xi}$ slightly increases in a concave way with the power $\beta$ but is basically constant, suggesting that our dependence measure is only faintly sensitive to the value of $\beta$. On top of being potentially insightful for the understanding of max-stable fields, this finding is valuable for actuarial practice as it means that the correlation between damage due to extreme wind speeds is basically the same whatever the value of the power. It also makes sense to consider the extension of \eqref{Eq_DepMeas} where the relevant damage power is $\beta_1$ at $\Mb{x}_1$ and $\beta_2$ at $\Mb{x}_2$ with $\beta_1$ and  $\beta_2$ not necessarily equal, as in Theorem \ref{Th_Cov_Maxstab_Real_Marg}. The behaviour is fairly similar to what we just described (not shown). Especially, for a given distance, the values are not highly sensitive to the combination $(\beta_1, \beta_2)$. For a fixed $\beta_1$, the correlation first increases and then decreases with $\beta_2$, but the value of $\beta_2$ achieving the maximum increases to $12$ as $\beta_1$ increases to $12$. Moreover, the higher the values of $\beta_1$, the higher the sensitivity with respect to $\beta_2$. The evolution of the covariance with respect to the distance is similar as for the correlation, but the sensitivity with $\beta$ (or the combination $(\beta_1, \beta_2)$ in the extension just mentioned) is much larger, owing to the non-normalized nature of covariance. 
\begin{figure}[h!]
\center
\includegraphics[scale=0.85]{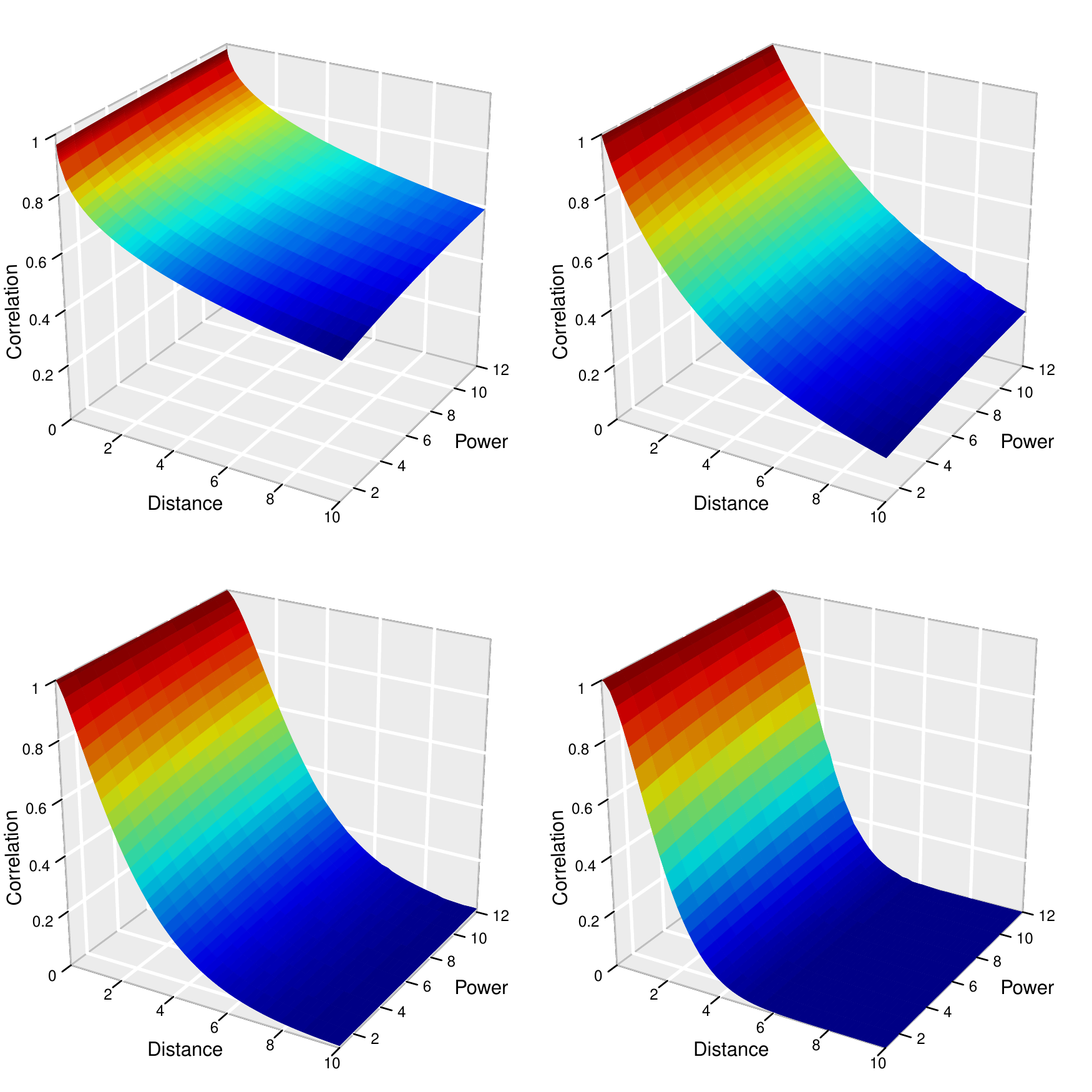}
\caption{Evolution of $\mathcal{D}_{\beta, 30, 3, -0.2}(\gamma_W(\Mb{x}_2-\Mb{x}_1))$ with respect to the distance $\| \Mb{x}_2-\Mb{x}_1 \|$ and the power $\beta$. The variogram of the Brown--Resnick field is $\gamma_W(\Mb{x})= \| \Mb{x} \|^{\psi}$, $\Mb{x} \in \mathbb{R}^2$; top left, top right, bottom left and bottom right panels correspond to $\psi=0.5, 1, 1.5$ and $2$, respectively.}
\label{Plot_Surface_Corr_4panels}
\end{figure}
}

\section{\Tb{Spatial risk measures induced by powers of max-stable fields and applications to wind extremes}} 
\label{Sec_Examples_SRM_Power_Damage_Function}

In this section, we study some examples of spatial risk measures induced by the cost field
\Beq
\label{Eq_Specific_Cost_Process}
\left \{ C(\Mb{x}) \right \}_{\Mb{x} \in \mathbb{R}^2}=\left \{ Z(\Mb{x})^{\beta} \right \}_{\Mb{x} \in \mathbb{R}^2},
\Eeq
where \Tb{$\{ Z(\Mb{x}) \}_{\Mb{x} \in \mathbb{R}^2}$ is a max-stable random field belonging to $\mathcal{C}$ with GEV parameters $\eta \in \mathbb{R}$, $\tau>0$, $\xi \neq 0$ and the power $\beta \in \mathbb{N}_*$ satisfies $\beta \xi < 1/2$ or $\beta \xi <1$. Most often, $Z$ will be the \Tb{Brown--Resnick} random field. As before, we keep a tight connection to concrete applications to the risk of losses due to extreme wind speeds.}

\Tb{We shall use the following lemma.}
\begin{Lem}
\label{Lemma_CP_Locally_Integrable_Sample_Paths}
Let $\{ Z(\Mb{x}) \}_{\Mb{x} \in \mathbb{R}^2}$ be a \Tb{measurable max-stable random field with GEV parameters $\eta \in \mathbb{R}$, $ \tau>0$ and $\xi \neq 0$. Let $\beta \in \mathbb{N}_*$ such that $\beta \xi < 1$}. Then, the random field $Z^{\beta}$ belongs to $\mathcal{C}$.
\end{Lem}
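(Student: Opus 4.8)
The plan is to verify the two defining properties of membership in $\mathcal{C}$ separately: joint measurability and a.s.\ local integrability of the sample paths. For the first, I would simply note that $Z$ is measurable by hypothesis and that $z \mapsto z^{\beta}$ is continuous, hence Borel measurable; the composition $Z^{\beta}$ is therefore a measurable random field. For the second, I would appeal to the sufficient condition recalled just after \eqref{Eq_Link_Maxstb_Simple_Maxstab} (Proposition 1 in \cite{koch2019SpatialRiskAxioms}): since $Z^{\beta}$ is measurable, it is enough to show that the map $\Mb{x} \mapsto \mathbb{E}[|Z(\Mb{x})^{\beta}|]$ is locally integrable on $\mathbb{R}^2$.

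The core of the argument is thus to control $\mathbb{E}[|Z(\Mb{x})^{\beta}|]$. First I would pass to the underlying simple max-stable field via \eqref{Eq_Link_Maxstb_Simple_Maxstab}, writing $Z(\Mb{x}) = (\eta - \tau/\xi) + (\tau/\xi)\, Z^{(\mathrm{s})}(\Mb{x})^{\xi}$, and expand the $\beta$-th power with the binomial theorem:
\Beq
Z(\Mb{x})^{\beta} = \sum_{k=0}^{\beta} \binom{\beta}{k} \left( \eta - \frac{\tau}{\xi} \right)^{k} \left( \frac{\tau}{\xi} \right)^{\beta-k} Z^{(\mathrm{s})}(\Mb{x})^{(\beta-k)\xi}.
\Eeq
Taking absolute values and using the triangle inequality gives $\mathbb{E}[|Z(\Mb{x})^{\beta}|] \leq \sum_{k=0}^{\beta} \binom{\beta}{k} |\eta - \tau/\xi|^{k}\, |\tau/\xi|^{\beta-k}\, \mathbb{E}[Z^{(\mathrm{s})}(\Mb{x})^{(\beta-k)\xi}]$, where each $Z^{(\mathrm{s})}(\Mb{x})$ is standard Fr\'echet. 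It then remains to check that every exponent $(\beta-k)\xi$, $k=0, \dots, \beta$, is strictly less than $1$, so that Lemma \ref{Chapriskmeasure_Lem_Frechet_Margins} applies and yields $\mathbb{E}[Z^{(\mathrm{s})}(\Mb{x})^{(\beta-k)\xi}] = \Gamma(1-(\beta-k)\xi) < \infty$.

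This last verification is where the hypothesis $\beta \xi < 1$ (rather than the stronger $\beta \xi < 1/2$ required for the covariance results) enters, and it is the step to handle with a little care because the sign of $\xi$ matters. If $\xi > 0$, the largest exponent is attained at $k=0$ and equals $\beta \xi < 1$, while all others are smaller; if $\xi < 0$, every exponent $(\beta-k)\xi$ is nonpositive and hence trivially below $1$. In both cases all $\beta+1$ moments are finite, so the bound above is a finite constant not depending on $\Mb{x}$ (the marginal law of $Z(\Mb{x})$ being the same $\mathrm{GEV}(\eta, \tau, \xi)$ at every site). A finite constant is integrable over any compact subset of $\mathbb{R}^2$, which establishes the local integrability of $\Mb{x} \mapsto \mathbb{E}[|Z(\Mb{x})^{\beta}|]$ and completes the proof. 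The only genuine obstacle is the bookkeeping across the two sign regimes of $\xi$; everything else is a direct application of \eqref{Eq_Link_Maxstb_Simple_Maxstab}, the binomial theorem, and Lemma \ref{Chapriskmeasure_Lem_Frechet_Margins}.
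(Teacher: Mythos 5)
Your proof is correct and follows essentially the same route as the paper's: measurability is immediate, and a.s.\ local integrability of the sample paths follows from Proposition 1 in \cite{koch2019SpatialRiskAxioms} applied to the function $\Mb{x} \mapsto \mathbb{E}\left[ \left| Z(\Mb{x})^{\beta} \right| \right]$, which is constant by the identical GEV margins. The only difference is that you verify explicitly, via \eqref{Eq_Link_Maxstb_Simple_Maxstab}, the binomial theorem and Lemma \ref{Chapriskmeasure_Lem_Frechet_Margins}, that this constant is finite under $\beta \xi < 1$ (treating both signs of $\xi$), a step the paper leaves implicit.
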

\begin{proof}
The field $Z^{\beta}$ is obviously measurable. \Tb{Furthermore, as $Z$ has identical univariate marginal distributions,
the function $\Mb{x} \mapsto \mathbb{E} [ |Z(\Mb{x})^{\beta} | ]$}
is constant and hence locally integrable. Therefore, Proposition 1 in \cite{koch2019SpatialRiskAxioms} yields that $Z^{\beta}$ has a.s. locally integrable sample paths.
\end{proof}
\Tb{Let $\{ Z(\Mb{x}) \}_{\Mb{x} \in \mathbb{R}^2}$ and $\beta$ be as in Lemma \ref{Lemma_CP_Locally_Integrable_Sample_Paths}. We consider the field $\{ C(\Mb{x}) \}_{\Mb{x} \in \mathbb{R}^2}=\left \{ Z(\Mb{x})^{\beta} \right \}_{\Mb{x} \in \mathbb{R}^2}$ and the spatial risk measure associated with the expectation $\mathcal{R}_1(A, C)=\mathbb{E} \left[ L_N(A, C) \right], A \in \mathcal{A}$. It is clear that $C$ is measurable. In addition, since it has identical univariate marginal distributions and $\beta \xi <1$, it has a constant expectation and, for any $\Mb{x} \in \mathbb{R}^2$, $\mathbb{E}[|C(\Mb{x})|]=\mathbb{E}[|C(\Mb{0})|] < \infty$. 
Consequently, Theorem 3 in \cite{koch2019SpatialRiskAxioms} gives that, for all $A \in \mathcal{A}$, 
$\mathcal{R}_1(A,C)= \mathbb{E} \left[ C(\Mb{0}) \right]$ and that $\mathcal{R}_1(\cdot, C)$ satisfies the axioms of spatial invariance under translation and spatial sub-additivity. Provided that $\mathbb{E}[C(\Mb{0})] \neq 0$, it also yields that $\mathcal{R}_1(\cdot, C)$ satisfies the axiom of asymptotic spatial homogeneity of order $0$ with $K_1(A,C)=0$ and $K_2(A,C)=\mathbb{E} \left[ C(\Mb{0}) \right]$, $A \in \mathcal{A}_c$.
Using \eqref{Eq_Link_Maxstb_Simple_Maxstab}, the binomial theorem and Lemma \ref{Chapriskmeasure_Lem_Frechet_Margins}, we obtain
\Beq
\label{Ex_Exp_Z_beta}
\mathbb{E} \left[ C(\Mb{0}) \right]=\sum_{k=0}^{\beta} {\beta \choose k} \left( \eta-\frac{\tau}{\xi} \right)^{k} \left( \frac{\tau}{\xi} \right)^{\beta-k} \Gamma(1-[\beta-k]\xi).
\Eeq  
}

\Tb{Spatial risk measures associated with the expectation are not of great interest since, by Fubini's Theorem, they do not account for the spatial dependence of the cost field $C$.} In the following, we study some spatial risk measures associated with variance in detail. Then, we provide a central limit-based approximation of the distribution of $L_N(\lambda A, C)$ for $A \in \mathcal{A}_c$ and $\lambda$ large enough. Finally, we analyze some spatial risk measures associated with VaR and ES. \Tb{For a discussion about the respective advantages and drawbacks of variance, VaR and ES as classical risk measures, we refer the reader to \citet[][Section 3]{koch2019SpatialRiskAxioms}.}

\subsection{Spatial risk measures associated with variance}
\label{Subsec_Variance}

We focus on $\mathcal{R}_2(\cdot, C)=\mbox{Var}\left( L_N(\cdot, C) \right)$, provided it is finite, where $C$ is given in \eqref{Eq_Specific_Cost_Process}. \Tb{We first study in detail} the function $\lambda \mapsto \mathcal{R}_2(\lambda A, C)$ for specific regions $A \in \mathcal{A}$. Among others, we derive useful expressions for $\mathcal{R}_2(\lambda A, C)$, $\lambda>0$, \Tb{that may be} of practical relevance for the insurance industry \Tb{and} will allow us to prove the axiom of spatial sub-additivity in specific configurations. Second, we provide conditions on the field $Z$ such that $\mathcal{R}_2(\cdot, C)$ satisfies (at least) part of the axioms \Tb{presented in Section \ref{Subsec_Spat_Risk_Axioms}}.

\subsubsection{Study of $\mathcal{R}_2(\lambda A, C), \lambda >0$}
\label{Subsubsec_R2_lambdaA}

Let $\left \{ Z(\Mb{x}) \right \}_{\Mb{x} \in \mathbb{R}^2}$ be a \Tb{measurable max-stable random field with GEV parameters $\eta \in \mathbb{R}$, $\tau>0$ and $\xi \neq 0$. Moreover, let $\beta \in \mathbb{N}_*$ such that $\beta \xi < 1/2$ and $\{ C(\Mb{x}) \}_{\Mb{x} \in \mathbb{R}^2}=\{ Z(\Mb{x})^{\beta} \}_{\Mb{x} \in \mathbb{R}^2}$.} Lemma \ref{Lemma_CP_Locally_Integrable_Sample_Paths} yields that $C \in \mathcal{C}$ \Tb{and it easily follows from Lemma \ref{Chapriskmeasure_Lem_Frechet_Margins} that, for all $A \in \mathcal{A}$, $\sup_{\Mb{x} \in A} \{ \mathbb{E} \left[C(\Mb{x})^2\right] \} < \infty$}. Thus, using Theorem 4 in \cite{koch2019SpatialRiskAxioms}, we obtain that, for all $A \in \mathcal{A}$ and $\lambda>0$,
\Beq
\label{Eq_R2_General}
\mathcal{R}_2(\lambda A, C)= \frac{1}{\lambda^4 [\nu(A)]^2} \int_{\lambda A}  \int_{\lambda A} \mathrm{Cov}\left( Z(\Mb{x})^{\beta}, Z(\Mb{y})^{\beta} \right) \  \nu(\mathrm{d}\Mb{x})\  \nu(\mathrm{d}\Mb{y}).
\Eeq
Hence, taking advantage of the expression of $\mathrm{Cov} \left( Z(\Mb{x})^{\beta},  Z(\Mb{y})^{\beta} \right)$ obtained in  \eqref{Eq_Cov_Maxstab_Real_Marg_Eq_Coeff}, we deduce the expression of $\mathcal{R}_2(\lambda A, C)$ in the case where \Tb{$Z$ is the Brown--Resnick random field with an isotropic variogram and} when the region $A$ is a disk or a square. In the whole paper, disk and square refer to a closed disk with positive radius and a (closed) square with positive side, respectively. The corresponding results are given in the next \Tb{theorem}. 

\Tb{
\begin{Th}
\label{Th_R2_Lambda_Finite_BR}
Let $\{ Z(\Mb{x}) \}_{\Mb{x} \in \mathbb{R}^2}$ be a measurable Brown--Resnick random field associated with an isotropic variogram $\gamma_W$ whose corresponding univariate function is $\gamma_{W, \mathrm{u}}$ and with GEV parameters $\eta \in \mathbb{R}$, $\tau>0$ and $\xi \neq 0$. 
Let $\beta \in \mathbb{N}_*$ such that $\beta \xi < 1/2$ and $\{ C(\Mb{x}) \}_{\Mb{x} \in \mathbb{R}^2}= \{ Z(\Mb{x})^{\beta} \}_{\Mb{x} \in \mathbb{R}^2}$. Then:
\Ben
\item Let $A$ be a disk with radius $R$. For all $\lambda >0$, we have
\begin{align}
\label{Eq_Expression_R2_Disk}
\mathcal{R}_2(\lambda A, C) & = - \sum_{k_1=0}^{\beta} \sum_{k_2=0}^{\beta} B_{k_1, k_2, \beta, \eta, \tau, \xi} \ \Gamma (1-[\beta-k_1]\xi ) \Gamma(1-[\beta-k_2]\xi) \nonumber \\
& \quad \ + \int_{h=0}^{2R} f_d(h,R) \ g_{\beta, \eta, \tau, \xi} \left( \sqrt{\gamma_{W, \mathrm{u}}(\lambda h)} \right)\  \nu(\mathrm{d}h),
\end{align}
where $f_d$ is the density of the Euclidean distance between two points independently and uniformly distributed on $A$, given, for $h \in [0, 2R]$, by
$$
f_d(h,R) = \frac{2h}{R^2} \left( \frac{2}{\pi} \arccos \left( \frac{h}{2 R} \right) - \frac{h}{\pi R} \sqrt{1-\frac{h^2}{4 R^2}}   \right).
$$
\item Let $A$ be a square with side $R$. For all $\lambda >0$, we have
\begin{align*}
\mathcal{R}_2(\lambda A, C) & = - \sum_{k_1=0}^{\beta} \sum_{k_2=0}^{\beta} B_{k_1, k_2, \beta, \eta, \tau, \xi} \ \Gamma (1-[\beta-k_1]\xi ) \Gamma(1-[\beta-k_2]\xi) \\
& \quad \ + \int_{h=0}^{R \sqrt{2}} f_s(h,R) \ g_{\beta, \eta, \tau, \xi} \left( \sqrt{\gamma_{W, \mathrm{u}}(\lambda h)} \right)\  \nu(\mathrm{d}h),
\end{align*}
where $f_s$ is the density of the Euclidean distance between two points independently and uniformly distributed on $A$, written as
$$
f_s(h,R)=
\left \{
\begin{array}{ll}
\frac{2 \pi h}{R^2} - \frac{8 h^2}{R^3} + \frac{2 h^3}{R^4}, & \quad h \in [0,R], \\
\Big ( -2-b+ 3 \sqrt{b-1} + \frac{b+1}{\sqrt{b-1}} +2 \arcsin \left( \frac{2-b}{b} \right) - \frac{4}{b \sqrt{1-(2-b)^2/b^2}}\Big) \frac{2h}{R^2}, & \quad h \in  [R, R \sqrt{2}],
\end{array}
\right.
$$
with $b=h^2/R^2$.
\Een
\end{Th}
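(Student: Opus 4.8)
The plan is to begin from the general expression \eqref{Eq_R2_General} for $\mathcal{R}_2(\lambda A, C)$ and substitute into it the covariance formula \eqref{Eq_Cov_Maxstab_Real_Marg_Eq_Coeff}. Abbreviating by $K=\sum_{k_1=0}^{\beta} \sum_{k_2=0}^{\beta} B_{k_1, k_2, \beta, \eta, \tau, \xi}\, \Gamma(1-[\beta-k_1]\xi)\, \Gamma(1-[\beta-k_2]\xi)$ the constant appearing there, \eqref{Eq_Cov_Maxstab_Real_Marg_Eq_Coeff} reads $\mathrm{Cov}(Z(\Mb{x})^{\beta}, Z(\Mb{y})^{\beta}) = g_{\beta, \eta, \tau, \xi}(\sqrt{\gamma_W(\Mb{y}-\Mb{x})}) - K$. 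Since $\gamma_W$ is isotropic, $\gamma_W(\Mb{y}-\Mb{x}) = \gamma_{W,\mathrm{u}}(\| \Mb{y}-\Mb{x} \|)$, so the covariance depends on $\Mb{x}, \Mb{y}$ only through the Euclidean distance $\| \Mb{y}-\Mb{x} \|$. I would first note that the prefactor rewrites as $1/(\lambda^4 [\nu(A)]^2) = 1/[\nu(\lambda A)]^2$, because $\nu(\lambda A)=\lambda^2 \nu(A)$; thus $\mathcal{R}_2(\lambda A, C)$ is exactly the average of $\mathrm{Cov}(Z(\cdot)^{\beta}, Z(\cdot)^{\beta})$ over two points drawn independently and uniformly on $\lambda A$. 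All interchanges below are licensed by the finiteness of the second moment (guaranteed by $\beta\xi<1/2$ via Lemma \ref{Chapriskmeasure_Lem_Frechet_Margins}), which is precisely what makes \eqref{Eq_R2_General} valid.

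The constant term $-K$ contributes $-K$ to this average, since $\int_{\lambda A}\int_{\lambda A} K\, \nu(\mathrm{d}\Mb{x})\,\nu(\mathrm{d}\Mb{y}) = K\,[\nu(\lambda A)]^2$ cancels the normalisation; this produces the leading double sum in both claimed expressions. It then remains to evaluate the contribution of the distance-dependent term,
$$
\frac{1}{[\nu(\lambda A)]^2} \int_{\lambda A} \int_{\lambda A} g_{\beta, \eta, \tau, \xi}\!\left( \sqrt{\gamma_{W,\mathrm{u}}(\| \Mb{y}-\Mb{x} \|)} \right) \nu(\mathrm{d}\Mb{x})\, \nu(\mathrm{d}\Mb{y}).
$$
The central observation is that, for any measurable $\phi$, the normalised double integral $[\nu(\lambda A)]^{-2}\int_{\lambda A}\int_{\lambda A} \phi(\| \Mb{y}-\Mb{x} \|)\, \nu(\mathrm{d}\Mb{x})\,\nu(\mathrm{d}\Mb{y})$ equals $\mathbb{E}[\phi(H_{\lambda A})]$, where $H_{\lambda A}$ denotes the Euclidean distance between two points independently and uniformly distributed on $\lambda A$. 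Because a homothety of ratio $\lambda$ preserves uniformity and scales all distances by $\lambda$, one has $H_{\lambda A} \overset{d}{=} \lambda H_A$, so a change of variable gives $\mathbb{E}[\phi(H_{\lambda A})] = \int_0^{\mathrm{diam}(A)} \phi(\lambda h)\, f_A(h)\, \nu(\mathrm{d}h)$, with $f_A$ the density of $H_A$. Taking $\phi(\cdot) = g_{\beta,\eta,\tau,\xi}(\sqrt{\gamma_{W,\mathrm{u}}(\cdot)})$ and inserting $f_A = f_d(\cdot, R)$ on $[0,2R]$ for the disk, respectively $f_A = f_s(\cdot, R)$ on $[0, R\sqrt{2}]$ for the square, yields the two stated formulas.

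The only remaining ingredient, and the part that demands the most care, is the identification of $f_d$ and $f_s$ as the densities of the distance between two points uniformly distributed on a disk of radius $R$ and on a square of side $R$, respectively. These are classical results of geometric probability: the disk density $f_d$ follows from an elementary computation, whereas the square density $f_s$ requires a more delicate case analysis according to whether $h \leq R$ or $R \leq h \leq R\sqrt{2}$, which is exactly what produces its piecewise form. I would either cite these densities or derive them directly; everything else in the argument is the bookkeeping of the scaling change of variable and the cancellation of the normalising constant.
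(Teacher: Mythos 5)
Your proposal is correct and follows essentially the same route as the paper: the paper's proof likewise substitutes the covariance formula \eqref{Eq_Cov_Maxstab_Real_Marg_Eq_Coeff} (with isotropy) into \eqref{Eq_R2_General} and then invokes, by reference to Corollary 1 of \cite{koch2017spatial}, exactly the reduction you spell out, namely rewriting the normalized double integral as an expectation over the distance between two independent uniform points on $\lambda A$, using the scaling $H_{\lambda A} \overset{d}{=} \lambda H_A$, and inserting the classical disk and square distance densities. You have simply made explicit the steps the paper delegates to that citation.
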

}
\begin{proof}
\Tb{Using \eqref{Eq_Cov_Maxstab_Real_Marg_Eq_Coeff}, we obtain, for $\Mb{x}_1, \Mb{x}_2 \in \mathbb{R}^2$, 
\begin{align*}
& \quad \ \mbox{Cov} \left( Z(\Mb{x}_1)^{\beta}, Z(\Mb{x}_2)^{\beta} \right)
\\& = g_{\beta, \eta, \tau, \xi} \left( \sqrt{\gamma_{W, \mathrm{u}}(\| \Mb{x}_2-\Mb{x}_1 \|)} \right) - \sum_{k_1=0}^{\beta} \sum_{k_2=0}^{\beta} B_{k_1, k_2, \beta, \eta, \tau, \xi} \ \Gamma(1-[\beta-k_1]\xi) \Gamma(1-[\beta-k_2]\xi).
\end{align*}}
The results follow from \eqref{Eq_R2_General} and similar arguments as in the proof of Corollary 1 in \cite{koch2017spatial}.
\end{proof}

\Tb{Let $Z_{\xi}$ and $Z_0$ as in Proposition \ref{Prop_Continuity_Cov_xi_0}, and $\{ C(\Mb{x}) \}_{\Mb{x} \in \mathbb{R}^2}=\{ Z_{\xi}(\Mb{x})^{\beta} \}_{\Mb{x} \in \mathbb{R}^2}$. Using Cauchy--Schwarz inequality, we can easily apply the Lebesgue dominated convergence theorem to show that
$$\lim_{\xi \to 0} \mathcal{R}_2(\lambda A, C)= \frac{1}{\lambda^4 [\nu(A)]^2} \int_{\lambda A}  \int_{\lambda A} \mathrm{Cov}\left( Z_0(\Mb{x})^{\beta}, Z_0(\Mb{y})^{\beta} \right) \  \nu(\mathrm{d}\Mb{x})\  \nu(\mathrm{d}\Mb{y}).$$
Therefore, again, the case $\xi=0$ is recovered by letting $\xi$ tend to $0$.}

The following theorem concerns the limit of $\mathcal{R}_2 \left(\lambda A, Z^{\beta} \right)$ as $\lambda \to \infty$.
\Tb{
\begin{Th}
\label{Th_Limiting_Risk_Measure}
Let $Z$, $\beta$ and $C$ be as in Theorem \ref{Th_R2_Lambda_Finite_BR} and assume moreover that $\gamma_{W, \mathrm{u}}$ is measurable and satisfies
\Beq
\label{Eq_Limit_Gamma_u}
\lim_{h \to \infty} \gamma_{W, \mathrm{u}}(h)= \infty.
\Eeq
Then, for all $A$ being a disk with radius $R$ or a square with side $R$, $\lim_{\lambda \to \infty} \mathcal{R}_2 (\lambda A, C)=0$. 
\end{Th}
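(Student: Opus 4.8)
The plan is to start from the explicit integral representations of $\mathcal{R}_2(\lambda A, C)$ furnished by Theorem \ref{Th_R2_Lambda_Finite_BR}, and to pass to the limit $\lambda \to \infty$ inside the integral by dominated convergence. Writing
$$ S = \sum_{k_1=0}^{\beta} \sum_{k_2=0}^{\beta} B_{k_1, k_2, \beta, \eta, \tau, \xi} \ \Gamma(1-[\beta-k_1]\xi) \Gamma(1-[\beta-k_2]\xi), $$
Theorem \ref{Th_R2_Lambda_Finite_BR} gives, in the disk case,
$$ \mathcal{R}_2(\lambda A, C) = -S + \int_{0}^{2R} f_d(h,R) \ g_{\beta, \eta, \tau, \xi}\left( \sqrt{\gamma_{W, \mathrm{u}}(\lambda h)} \right) \ \nu(\mathrm{d}h), $$
and the analogous identity holds in the square case with $f_s$ and upper limit $R\sqrt{2}$. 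The crucial observation is that this constant $S$ is precisely the limit of $g_{\beta, \eta, \tau, \xi}$ at infinity, by Proposition \ref{Prop_Lim_gtilde_infty}; hence the argument reduces to showing that the integral converges to $S$, which will then cancel the leading $-S$ term.

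First I would establish the pointwise limit of the integrand. For each fixed $h > 0$, assumption \eqref{Eq_Limit_Gamma_u} gives $\gamma_{W, \mathrm{u}}(\lambda h) \to \infty$ as $\lambda \to \infty$, so $\sqrt{\gamma_{W, \mathrm{u}}(\lambda h)} \to \infty$; combined with Proposition \ref{Prop_Lim_gtilde_infty} this yields $g_{\beta, \eta, \tau, \xi}(\sqrt{\gamma_{W, \mathrm{u}}(\lambda h)}) \to S$. (The single point $h=0$ is irrelevant, as both $f_d(\cdot,R)$ and $f_s(\cdot,R)$ vanish there and the integrating measure carries no atom.) Next I would produce an integrable dominating function: by Propositions \ref{Prop_Decrease_gtilde}, \ref{Prop_Lim_gtildebeta_hto0} and \ref{Prop_Lim_gtilde_infty}, the map $g_{\beta, \eta, \tau, \xi}$ is continuous and strictly decreasing on $[0, \infty)$ with finite limits $g_{\beta, \eta, \tau, \xi}(0)$ at $0$ and $S$ at $\infty$, hence bounded, so that $|f_d(h,R) \ g_{\beta, \eta, \tau, \xi}(\sqrt{\gamma_{W, \mathrm{u}}(\lambda h)})| \le M \, f_d(h,R)$ with $M = \max\{ |g_{\beta, \eta, \tau, \xi}(0)|, |S| \}$, the right-hand side being integrable because $f_d(\cdot,R)$ is a probability density on $[0,2R]$. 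Measurability of the integrand follows from the assumed measurability of $\gamma_{W, \mathrm{u}}$ together with the continuity of $g_{\beta, \eta, \tau, \xi}$ and of the square root.

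The dominated convergence theorem then gives
$$ \lim_{\lambda \to \infty} \int_{0}^{2R} f_d(h,R) \ g_{\beta, \eta, \tau, \xi}\left( \sqrt{\gamma_{W, \mathrm{u}}(\lambda h)} \right) \ \nu(\mathrm{d}h) = S \int_{0}^{2R} f_d(h,R) \ \nu(\mathrm{d}h) = S, $$
since $f_d(\cdot,R)$ integrates to one, whence $\lim_{\lambda \to \infty} \mathcal{R}_2(\lambda A, C) = -S + S = 0$; the square case is identical with $f_s$ in place of $f_d$. This argument is essentially routine once the explicit formulas and the auxiliary propositions are available; the only delicate point — and the one I would expect to be the main obstacle — is the justification of the dominating bound, which hinges on combining the monotonicity of $g_{\beta, \eta, \tau, \xi}$ (Proposition \ref{Prop_Decrease_gtilde}) with its two finite endpoint limits to guarantee boundedness over the whole ray $[0,\infty)$ traversed by $\sqrt{\gamma_{W, \mathrm{u}}(\lambda h)}$.
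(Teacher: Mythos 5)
Your proposal is correct and follows essentially the same route as the paper's own proof: both start from the explicit integral representations of Theorem \ref{Th_R2_Lambda_Finite_BR}, use the boundedness, continuity and monotonicity of $g_{\beta, \eta, \tau, \xi}$ (Propositions \ref{Prop_Decrease_gtilde}--\ref{Prop_Lim_gtilde_infty}) together with the measurability of $\gamma_{W, \mathrm{u}}$ to justify dominated convergence, and then invoke Proposition \ref{Prop_Lim_gtilde_infty} so that the limiting integral cancels the constant double-sum term. The only cosmetic difference is that the paper dominates the integrand by a constant $U$ (using boundedness of $f_d$ on $[0,2R]$) whereas you dominate by $M f_d(h,R)$, which is an equally valid choice.
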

\begin{proof}
We show the result when $A$ is a disk; the arguments are the same in the case of the square. 
Since $f_d$ is a continuous function of $h$ on $[0, 2R]$, it is bounded. By Propositions~\ref{Prop_Decrease_gtilde}--\ref{Prop_Lim_gtilde_infty}, $g_{\beta, \eta, \tau, \xi}$ is also bounded and consequently there exists $U>0$ such that, for all $h \geq 0$ and $\lambda>0$, $| f_d(h,R) \ g_{\beta, \eta, \tau, \xi} ( \sqrt{\gamma_{W, \mathrm{u}}(\lambda h)} ) | \leq U$. In addition, $f_d$ and $g_{\beta, \eta, \tau, \xi}$ are continuous and thus measurable, which yields by measurability of $\gamma_{W, \mathrm{u}}$ that, for all $\lambda>0$, the function $h \mapsto f_d(h,R) \ g_{\beta, \eta, \tau, \xi} ( \sqrt{\gamma_{W, \mathrm{u}}(\lambda h)} )$ is measurable. Thus, by Lebesgue's dominated convergence theorem, the fact that $f_d$ is a density and \eqref{Eq_Limit_Gamma_u}, 
\begin{align}
\label{Eq_Lim_Int_Lambdah}
\lim_{\lambda \to \infty} \int_{h=0}^{2R} f_d(h,R) \ g_{\beta, \eta, \tau, \xi} \left( \sqrt{\gamma_{W, \mathrm{u}}(\lambda h)} \right)\  \nu(\mathrm{d}h) &= \int_{h=0}^{2R} f_d(h,R) \ \lim_{\lambda \to \infty}  g_{\beta, \eta, \tau, \xi} \left( \sqrt{\gamma_{W, \mathrm{u}}(\lambda h)} \right)\  \nu(\mathrm{d}h) \nonumber
\\& = \lim_{\lambda \to \infty}  g_{\beta, \eta, \tau, \xi}(\lambda).
\end{align}
Finally, combining \eqref{Eq_Lim_gtilde_infty}, \eqref{Eq_Expression_R2_Disk} and \eqref{Eq_Lim_Int_Lambdah}, we obtain
$$ \lim_{\lambda \to \infty} \mathcal{R}_2(\lambda A, C)= - \sum_{k_1=0}^{\beta} \sum_{k_2=0}^{\beta} B_{k_1, k_2, \beta, \eta, \tau, \xi} \ \Gamma(1-[\beta-k_1]\xi) \Gamma(1-[\beta-k_2]\xi) + \lim_{\lambda \to \infty} g_{\beta, \eta, \tau, \xi}(\lambda)=0.$$
\end{proof}
}

\Tb{Let $Z$ and $C$ be as in Theorem~\ref{Th_R2_Lambda_Finite_BR}. If the function $h \mapsto \gamma_{W, \mathrm{u}}(h)$ is strictly increasing (respectively increasing), then Proposition \ref{Prop_Decrease_gtilde} implies that, for all $h>0$, the function $\lambda \mapsto g_{\beta, \eta, \tau, \xi} ( \sqrt{\gamma_{W, \mathrm{u}}(\lambda h)} )$ is strictly decreasing (respectively decreasing). Therefore, it follows from Theorem \ref{Th_R2_Lambda_Finite_BR} that $\lambda \mapsto \mathcal{R}_2(\lambda A, C)$ is strictly decreasing (respectively decreasing) for $A$ being a disk or a square; there is thus spatial diversification. Furthermore, if $\gamma_{W, \mathrm{u}}$ is measurable and satisfies \eqref{Eq_Limit_Gamma_u}, Theorem~\ref{Th_Limiting_Risk_Measure} entails that this spatial diversification is total, which has to be understood in the sense that $\lim_{\lambda \to \infty} \mathcal{R}_2(\lambda A, C)=0$. Theorem~\ref{Th_R2_Lambda_Finite_BR} is of interest for the insurance industry as it allows a company to compute the value of $\lambda$ such that $\mathcal{R}_2(\lambda A, C)$ equals a wanted low variance level. In other words, it enables one to find out the characteristic dimension of a geographical area needed to reach a specified low variance for the loss per surface unit.}

\Tb{Below, we study how $\mathcal{R}_2(\lambda A, C)$ evolves with respect to $\lambda$ under the assumptions of Theorem \ref{Th_R2_Lambda_Finite_BR} for various values of $\beta$. The integrals involved have no closed form and thus, as above, we use adaptive quadrature with $3 \times 10^{-7}$ as relative accuracy. We set without loss of generality $R=1$ and, as in Section \ref{Sec_Dependence_Power_Maxstable}, we choose $\eta=30$, $\tau=3$, $\xi=-0.2$, and consider the variogram 
$\gamma_W(\Mb{x})= \| \Mb{x} \|^{\psi}$, $\Mb{x} \in \mathbb{R}^2$, with $\psi=0.5, 1, 1.5, 2$; similar results are obtained with different marginal parameters. Figure \ref{Plot_R2_1} displays a rapid decrease to $0$ in the case $\psi=2$ (blue curve) which corresponds to the Smith random field; the decrease is somewhat slower in the case of the square. This behaviour is similar to the one observed when the cost field is the indicator function of the Smith random field exceeding a given threshold; see \cite{koch2017spatial}, Figure 1. Figure \ref{Plot_R2_1} also shows that, for a fixed $\beta$, the decrease to $0$ becomes much slower when $\psi$ decreases, which was theoretically expected since the rate of convergence to $0$ raises as the rate of divergence of the variogram to infinity increases. Again, an insurance company that would model wind speed extremes using the Smith model would substantially underestimate its risk if the true value of $\psi$ is less than $1$. The curves are very similar for other values of $\beta$, but for a given $\lambda$ the value of $\mathcal{R}_2(\lambda A, C)$ increases basically exponentially with $\beta$; the plot of $\log (\mathcal{R}_2(\lambda A, C))$ with respect to $\beta$ is essentially linear (not shown). This feature is comprehensible in view of the expression of $g_{\beta, \eta, \tau, \xi}$ and the evolution of the function $g_{\beta}^{(\mathrm{s})}$ with respect to $\beta$ (see Figure \ref{Plot_Persp_gbeta} in Appendix \ref{Sec_Appendix_Maxstable}). Hence, although the value of $\beta$ has little impact on the spatial dependence measure $\mathcal{D}_{\beta, \eta, \tau, \xi}$, it strongly influences the values of $\mathcal{R}_2(\lambda A, C)$, $\lambda>0$.}
\Tb{As in Section \ref{Sec_Dependence_Power_Maxstable}, choosing a range $\kappa$ different from $1$ would not affect our conclusions. It would just modify the values on the x-axis: the larger the range, the slower the spatial diversification.}
\begin{figure}[h!]
\center
\includegraphics[scale=0.7]{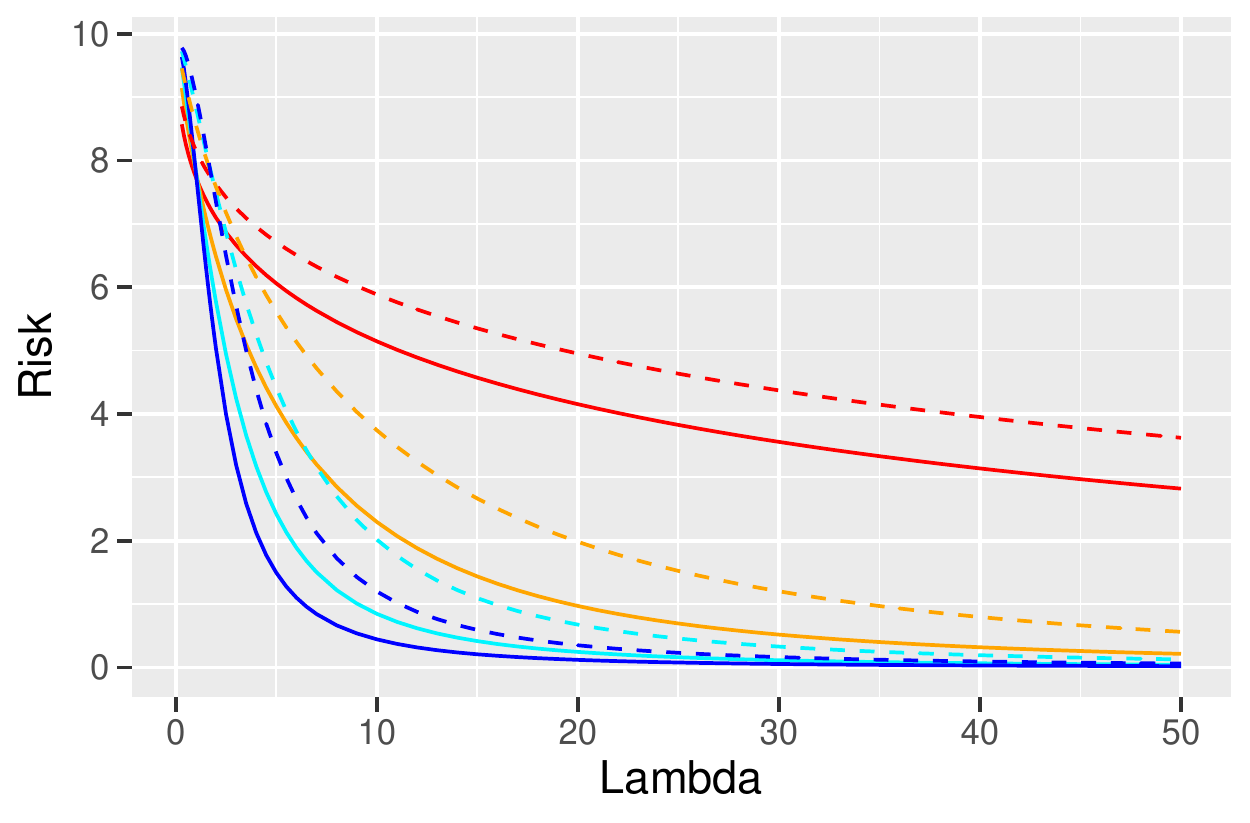}
\caption{The solid (respectively dashed) lines depict the evolution of $\mathcal{R}_2(\lambda A, C)$ with respect to $\lambda$ in the case $\beta=1$, where $A$ is a disk (respectively a square) with radius (respectively side) $R=1$. The variogram of the Brown--Resnick field is $\gamma_W(\Mb{x})= \| \Mb{x} \|^{\psi}$, $\Mb{x} \in \mathbb{R}^2$; red, orange, turquoise and blue correspond to $\psi=0.5$, $1$, $1.5$ and $2$, respectively.}
\label{Plot_R2_1}
\end{figure}

\subsubsection{Axioms}
\label{Subsubsec_Axioms}

\Tb{
We start with a preliminary result. Let $\mathcal{B}(\mathbb{R})$ and $\mathcal{B}((0, \infty))$ denote the Borel $\sigma$-fields on $\mathbb{R}$ and $(0, \infty)$, respectively. 
\begin{Lem}
\label{Lem_Moment_2plusdelta}
Let $\{ Z^{(\mathrm{s})}(\Mb{x}) \}_{\Mb{x} \in \mathbb{R}^2}$ be a simple max-stable random field. Let $\eta \in \mathbb{R}$, $\tau>0$, $\xi \in \mathbb{R}$ and $\beta \in \mathbb{N}_*$.
The function defined by
\Beq
\label{Eq_D_beta_eta_tau_xi}
D_{\beta, \eta, \tau, \xi}(z)=
\left\{
\begin{array}{ll}
\left[ \left( \eta-\tau/\xi \right) + \tau z^{\xi}/\xi \right]^{\beta}, & \quad \xi \neq 0, \\
\left[ \eta + \tau \log(z) \right]^{\beta}, & \quad \xi = 0,
\end{array}
\qquad z >0, \right.
\Eeq
is measurable from $((0, \infty), \mathcal{B}((0,\infty)))$ to $(\mathbb{R},\mathcal{B}^1)$ and strictly increasing. Moreover, if $\beta \xi <1/2$, then $\mathbb{E} [ | D_{\beta, \eta, \tau, \xi}(Z^{(\mathrm{s})}(\Mb{0})) |^{2+\delta} ]<~\infty$ for any $\delta$ such that $0 < \delta < 1/(\xi \beta)-2$.
\end{Lem}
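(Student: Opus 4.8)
The plan is to establish the three assertions in turn, the first two being routine and the moment bound carrying the real content. Throughout I write $D_{\beta,\eta,\tau,\xi}=\phi_\beta\circ g$, where $\phi_\beta(t)=t^\beta$, $g(z)=(\eta-\tau/\xi)+(\tau/\xi)z^\xi$ when $\xi\neq 0$, and $g(z)=\eta+\tau\log z$ when $\xi=0$; note that $g(Z^{(\mathrm{s})}(\Mb{0}))=Z(\Mb{0})$ by \eqref{Eq_Link_Maxstb_Simple_Maxstab}, so that $D_{\beta,\eta,\tau,\xi}(Z^{(\mathrm{s})}(\Mb{0}))=Z(\Mb{0})^\beta$.

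\emph{Measurability and monotonicity.} On $(0,\infty)$ the maps $z\mapsto z^\xi$ and $z\mapsto\log z$ are continuous, hence $g$ and the composition $\phi_\beta\circ g$ are continuous and therefore Borel measurable from $((0,\infty),\mathcal{B}((0,\infty)))$ to $(\mathbb{R},\mathcal{B}^1)$. For strict monotonicity I would differentiate: $g'(z)=\tau z^{\xi-1}>0$ for $\xi\neq 0$ and $g'(z)=\tau/z>0$ for $\xi=0$, so $g$ is strictly increasing in every case; composing with $\phi_\beta$, which is strictly increasing on $(0,\infty)$ (and on all of $\mathbb{R}$ when $\beta$ is odd), yields that $D_{\beta,\eta,\tau,\xi}$ is strictly increasing.

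\emph{The moment bound.} This is the crux. The claim reduces to $\mathbb{E}[|Z(\Mb{0})|^{p}]<\infty$ with $p=\beta(2+\delta)>1$, and I would focus on $\xi>0$, the only case where the interval $0<\delta<1/(\xi\beta)-2$ is nonempty (this is exactly where the hypothesis $\beta\xi<1/2$ is used). There $\tau/\xi>0$ and $(Z^{(\mathrm{s})}(\Mb{0}))^\xi>0$, so by the triangle inequality $|Z(\Mb{0})|\leq|\eta-\tau/\xi|+(\tau/\xi)(Z^{(\mathrm{s})}(\Mb{0}))^\xi$, and convexity of $t\mapsto t^p$ gives
$$ |Z(\Mb{0})|^{p}\leq 2^{p-1}\Big(|\eta-\tau/\xi|^{p}+(\tau/\xi)^{p}\,(Z^{(\mathrm{s})}(\Mb{0}))^{\xi p}\Big). $$
Taking expectations, the first summand is a finite constant and, by Lemma \ref{Chapriskmeasure_Lem_Frechet_Margins}, $\mathbb{E}[(Z^{(\mathrm{s})}(\Mb{0}))^{\xi p}]=\Gamma(1-\xi p)<\infty$ exactly when $\xi p<1$. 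Since $\delta<1/(\xi\beta)-2$ is precisely $\xi\beta(2+\delta)=\xi p<1$, the right-hand side is integrable, proving the bound.

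\emph{Remaining cases and main obstacle.} For $\xi=0$ one reads $1/(\xi\beta)-2=+\infty$, and indeed $Z(\Mb{0})=\eta+\tau\log Z^{(\mathrm{s})}(\Mb{0})$ is Gumbel with finite moments of all orders, so every $\delta>0$ is admissible; for $\xi<0$ the prescribed interval is empty and the assertion holds vacuously (here the GEV has a finite upper endpoint and an exponentially light lower tail, so all moments are in fact finite). I expect the moment step to be the main obstacle: one must carry the affine shift $\eta-\tau/\xi$ through the power without losing integrability and align the admissible range of $\delta$ with the exact threshold of Lemma \ref{Chapriskmeasure_Lem_Frechet_Margins} (finiteness of $\mathbb{E}[(Z^{(\mathrm{s})})^{\alpha}]$ iff $\alpha<1$). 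A secondary point to verify is the strict monotonicity for even $\beta$, where one relies on $\phi_\beta$ being applied on the nonnegative range of $g$.
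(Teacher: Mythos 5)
Your treatment of the moment bound --- the real content of the lemma --- is correct, and it takes a genuinely different route from the paper's. The paper makes the same initial reduction as you (via \eqref{Eq_Link_Maxstb_Simple_Maxstab}, $D_{\beta,\eta,\tau,\xi}(Z^{(\mathrm{s})}(\Mb{0}))=Z(\Mb{0})^{\beta}$ with $Z(\Mb{0})$ following the GEV with parameters $\eta,\tau,\xi$), but then it cites the proof of Proposition \ref{Prop_Continuity_Cov_xi_0}, where finiteness of $\mathbb{E}[|Z(\Mb{0})|^{\alpha}]$ under $\alpha\xi<1$ is obtained by working with the GEV density and the change of variable $z=[1+\xi(x-\eta)/\tau]^{-1/\xi}$. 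You avoid the density computation altogether: for $\xi>0$, the triangle inequality and the convexity bound $(a+b)^{p}\le 2^{p-1}(a^{p}+b^{p})$ reduce the claim to the standard Fr\'echet moment $\Gamma(1-\xi p)$ of Lemma \ref{Chapriskmeasure_Lem_Frechet_Margins}, with exactly the paper's threshold $\xi\beta(2+\delta)<1$. Your version is more elementary and self-contained, while the paper's single criterion handles all signs of $\xi$ and all exponents uniformly without case distinctions. Your case analysis --- the interval $(0,1/(\xi\beta)-2)$ is nonempty precisely when $\xi>0$ (given $\beta\xi<1/2$), the assertion is vacuous for $\xi<0$ (where in fact all moments are finite), and for $\xi=0$ the Gumbel margin has moments of every order --- is accurate and more explicit than anything in the paper.

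On measurability and strict monotonicity, the paper's proof only says the claim is ``obvious'', so you give strictly more detail than it does; but note that the point you defer as ``a secondary point to verify'' cannot actually be verified, because it is a genuine defect of the statement itself. For even $\beta$ the inner function $g$ does not have nonnegative range in general: for $\xi<0$ (the typical wind case, e.g., $\eta=30$, $\tau=3$, $\xi=-0.2$) the range of $g$ is $(-\infty,\eta-\tau/\xi)$, and for $\xi=0$ it is all of $\mathbb{R}$; on the set where $g<0$ the composition with the outer power $t\mapsto t^{\beta}$ is strictly decreasing, so $D_{\beta,\eta,\tau,\xi}$ is not strictly increasing on $(0,\infty)$. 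Strict monotonicity genuinely holds only for odd $\beta$, or when $g\ge 0$ on all of $(0,\infty)$ (for instance $\xi>0$ with $\eta\ge\tau/\xi$). Since the paper's own proof does not address this either, it is a gap shared with the paper rather than a gap of your proposal relative to it, but your argument as written (composing with a map ``strictly increasing on $(0,\infty)$'') silently assumes the positivity of $g$ and should not be presented as a complete proof of that part.
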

\begin{proof}
The fact that $D$ is measurable and strictly increasing is obvious. Denoting $Z= [D_{\beta, \eta, \tau, \xi}(Z^{(\mathrm{s})}(\Mb{0}))]^{1/\beta}$, we have, for $\delta>0$,
$$\mathbb{E} \left[ \left| D_{\beta, \eta, \tau, \xi}(Z^{(\mathrm{s})}(\Mb{0})) \right|^{2+\delta} \right]=\mathbb{E} \left[ \left|Z^{\beta} \right|^{2+\delta}\right] = \mathbb{E}\left[\left |Z \right|^{\beta(2+\delta)}\right],$$
which is finite (see the proof of Proposition \ref{Prop_Continuity_Cov_xi_0}) provided $\beta(2+\delta) \xi < 1$ as $Z$ follows the GEV with parameters $\eta$, $\tau$ and $\xi$. The latter inequality is satisfied for any positive $\delta$ such that $\delta < 1/(\xi \beta)-2$.
\end{proof}
}

\Tb{
We shall use the following remark.
\begin{Rq}
It is easily seen that Theorem 3 in \cite{koch2017TCL} also holds true if $Z$ is a simple Brown--Resnick field with variogram $\gamma_W(\Mb{x})=m  \| \Mb{x} \|_{\Sigma}^{\psi}$ (and not only $m  \| \Mb{x} \|^{\psi}$), where $m >0$, $\psi \in (0,2]$ and $\Sigma$ is any symmetric positive-definite matrix; this implies that Theorem 8 and Corollary 4 in \cite{koch2019SpatialRiskAxioms} are also true in this more general setting. In the following, when we refer to Theorem 8 and Corollary 4 in \cite{koch2019SpatialRiskAxioms}, we refer to this extended version.
\end{Rq}
}

The following theorem provides conditions on the field $Z$ such that $\mathcal{R}_2(\cdot, C)$ satisfies (at least) part of the axioms \Tb{in Section \ref{Subsec_Spat_Risk_Axioms}}.
\Tb{
\begin{Th}
\label{Th_Axioms_R2}
\Ben
\item Let $\{Z(\Mb{x})\}_{\Mb{x} \in \mathbb{R}^2}$ be a stationary and measurable max-stable random field with GEV parameters $\eta \in \mathbb{R}$, $\tau>0$ and $\xi \in \mathbb{R}$. Let $\beta \in \mathbb{N}_*$ such that $\beta \xi < 1/2$. We introduce the cost field $\{ C(\Mb{x}) \}_{\Mb{x} \in \mathbb{R}^2}= \{ Z(\Mb{x})^{\beta} \}_{\Mb{x} \in \mathbb{R}^2}$. Then, the spatial risk measure induced by $C$ $\mathcal{R}_2(\cdot, C)$ satisfies the axiom of spatial invariance under translation. In particular, this is true for the tube random field as well as the measurable Schlather and Brown--Resnick fields (which includes the Smith field).
\item Let $\{Z(\Mb{x})\}_{\Mb{x} \in \mathbb{R}^2}$ be a measurable Brown--Resnick random field associated with an isotropic variogram $\gamma_W$ such that $\gamma_{W, \mathrm{u}}$ is increasing and with GEV parameters $\eta \in \mathbb{R}$, $\tau>0$ and $\xi \in \mathbb{R}$. Let $\beta \in \mathbb{N}_*$ such that $\beta \xi < 1/2$ and $\{ C(\Mb{x}) \}_{\Mb{x} \in \mathbb{R}^2}= \{ Z(\Mb{x})^{\beta} \}_{\Mb{x} \in \mathbb{R}^2}$.
Then $\mathcal{R}_2(\cdot, C)$ satisfies the axiom of spatial sub-additivity when the two regions are both a disk or a square. The axiom is satisfied with strict inequality if $\gamma_{W, \mathrm{u}}$ is strictly increasing, as in the case of the isotropic Smith field.
\item Let $\{Z(\Mb{x})\}_{\Mb{x} \in \mathbb{R}^2}$ be the Brown--Resnick random field associated with the variogram $\gamma_W(\Mb{x})=m \| \Mb{x} \|_{\Sigma}^{\psi}$, $\Mb{x} \in \mathbb{R}^2$, where $m >0$, $\psi \in (0,2]$ and $\Sigma$ is any symmetric positive-definite matrix. Assume that the GEV parameters of $Z$ are $\eta \in \mathbb{R}$, $\tau>0$ and $\xi \in \mathbb{R}$.
Let $\beta \in \mathbb{N}_*$ such that $\beta \xi <1/2$ and $\{ C(\Mb{x}) \}_{\Mb{x} \in \mathbb{R}^2}=\{ Z(\Mb{x})^{\beta} \}_{\Mb{x} \in \mathbb{R}^2}$. Then $\mathcal{R}_2(\cdot, C)$ satisfies the axiom of asymptotic spatial homogeneity of order $-2$, with
$$K_1(A,C)=0 \quad \mbox{ and } \quad K_2(A,C)= \frac{1}{\nu(A)} \int_{\mathbb{R}^2} \mathrm{Cov} \left( Z(\Mb{0})^{\beta}, Z(\Mb{x})^{\beta}\right) \nu(\mathrm{d}\Mb{x}), \quad A \in \mathcal{A}_c,$$
where the expression of $\mathrm{Cov}( Z(\Mb{0})^{\beta}, Z(\Mb{x})^{\beta})$, $\Mb{x} \in \mathbb{R}^2$, is given by \eqref{Eq_Cov_Maxstab_Real_Marg_Eq_Coeff}.
\Een
\end{Th}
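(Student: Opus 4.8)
The three parts reduce, in essence, to verifying the hypotheses of general variance-based results and identifying the relevant constants; the genuine work is to establish the monotonicity and the integrability of the covariance of $C$. For Part 1 the plan is to exploit law-invariance of the variance together with stationarity of $C$. By Lemma~\ref{Lemma_CP_Locally_Integrable_Sample_Paths}, $C=Z^{\beta}\in\mathcal{C}$, and since $Z$ is strictly stationary and $z\mapsto z^{\beta}$ is a fixed measurable map applied pointwise, $C$ is strictly stationary. For any $\Mb{v}\in\mathbb{R}^2$, the change of variable $\Mb{y}=\Mb{x}-\Mb{v}$ gives $L_N(A+\Mb{v},C)=\nu(A)^{-1}\int_A C(\Mb{y}+\Mb{v})\,\nu(\mathrm{d}\Mb{y})$, so stationarity of $C$ yields $L_N(A+\Mb{v},C)\overset{d}{=}L_N(A,C)$; as the variance is law-invariant, $\mathcal{R}_2(A+\Mb{v},C)=\mathcal{R}_2(A,C)$. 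Since the tube, Schlather, Brown--Resnick and Smith fields are all stationary, the conclusion applies to each of them.

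For Part 2 I would first show that $\Mb{h}\mapsto\mathrm{Cov}(C(\Mb{0}),C(\Mb{h}))$ is a decreasing function of $\|\Mb{h}\|$, strictly so when $\gamma_{W,\mathrm{u}}$ is strictly increasing. Indeed, \eqref{Eq_Cov_Maxstab_Real_Marg_Eq_Coeff} expresses this covariance as $g_{\beta,\eta,\tau,\xi}\!\left(\sqrt{\gamma_{W,\mathrm{u}}(\|\Mb{h}\|)}\right)$ minus a constant; since $g_{\beta,\eta,\tau,\xi}$ is strictly decreasing (Proposition~\ref{Prop_Decrease_gtilde}) and $h\mapsto\sqrt{\gamma_{W,\mathrm{u}}(h)}$ is increasing, the composition is decreasing (strictly under the strict hypothesis). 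With this distance-only, decreasing covariance in hand, I would invoke the sub-additivity result for $\mathcal{R}_2$ under a decreasing-covariance hypothesis (the corresponding result in \citealp{koch2019SpatialRiskAxioms}, valid in the present isotropic setting), whose conclusion is exactly sub-additivity for two disks or two squares, with strict inequality when the covariance is strictly decreasing. The delicate geometric point here is that the union of two disks is not itself a disk, so the disk/square expressions of Theorem~\ref{Th_R2_Lambda_Finite_BR} cannot be applied directly to $A_1\cup A_2$; it is this averaging-over-a-union comparison that the cited result handles.

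For Part 3 the target is $K_1=0$ together with $\lambda^2\mathcal{R}_2(\lambda A,C)\to K_2=\nu(A)^{-1}\int_{\mathbb{R}^2}\mathrm{Cov}(Z(\Mb{0})^{\beta},Z(\Mb{x})^{\beta})\,\nu(\mathrm{d}\Mb{x})$, which is the content of the central-limit-based Theorem~8 and Corollary~4 of \citet{koch2019SpatialRiskAxioms}, in the extended form of the Remark covering variograms $m\|\cdot\|_{\Sigma}^{\psi}$. Starting from \eqref{Eq_R2_General} and writing $r(\Mb{z})=\mathrm{Cov}(Z(\Mb{0})^{\beta},Z(\Mb{z})^{\beta})$, the double integral equals $\int_{\mathbb{R}^2}r(\Mb{z})\,\nu\big((\lambda A)\cap(\lambda A-\Mb{z})\big)\,\nu(\mathrm{d}\Mb{z})$; dividing by $\lambda^2$, the factor $\nu\big((\lambda A)\cap(\lambda A-\Mb{z})\big)/\lambda^2$ is bounded by $\nu(A)$ and tends pointwise to $\nu(A)$, so dominated convergence gives $\lambda^2\mathcal{R}_2(\lambda A,C)\to\nu(A)^{-1}\int_{\mathbb{R}^2}r$. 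Since $r>0$ and $r\to 0$ by Proposition~\ref{Prop_Lim_gtilde_infty}, one gets $K_2\in(0,\infty)\subset\mathbb{R}\setminus\{0\}$ and $K_1=0$, i.e.\ asymptotic spatial homogeneity of order $-2$. Checking the hypotheses of Theorem~8 reduces to the $(2+\delta)$-moment bound of Lemma~\ref{Lem_Moment_2plusdelta} (a valid $\delta>0$ exists because $\beta\xi<1/2$) and to the absolute integrability of $r$.

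The main obstacle is precisely this last point: the dominated-convergence argument (equivalently, the applicability of the extended Theorem~8) requires $\int_{\mathbb{R}^2}|r(\Mb{z})|\,\nu(\mathrm{d}\Mb{z})<\infty$, i.e.\ a quantitative decay rate of the covariance rather than the mere fact that $r\to 0$. This decay is controlled by the divergence rate of the variogram $m\|\cdot\|_{\Sigma}^{\psi}$ and is exactly what the extension flagged in the Remark (of Theorem~3 of \citealp{koch2017TCL} to $\|\cdot\|_{\Sigma}$-type variograms) secures; passing from the isotropic Euclidean case to the anisotropic $\Sigma$ case is the technically demanding step, and it is the same extended machinery that underpins the sub-additivity conclusion of Part~2 for the Smith and general Brown--Resnick fields.
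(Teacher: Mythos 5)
Your Points 1 and 3 are correct and essentially coincide with the paper's proofs. For Point 1 you unfold what the paper gets by citing Theorem 5, Point 1, of \cite{koch2019SpatialRiskAxioms} ($C \in \mathcal{C}$ via Lemma \ref{Lemma_CP_Locally_Integrable_Sample_Paths}, stationarity of $C$, law-invariance of the variance); the only elided step is that deducing $L_N(A+\Mb{v},C)\overset{d}{=}L_N(A,C)$ from equality of the finite-dimensional distributions of the translated field needs the measurability result recalled as Theorem 1 of that reference. For Point 3, your dominated-convergence limit, the $(2+\delta)$-moment check via Lemma \ref{Lem_Moment_2plusdelta}, and your identification of $\int_{\mathbb{R}^2}|\mathrm{Cov}(C(\Mb{0}),C(\Mb{x}))|\,\nu(\mathrm{d}\Mb{x})<\infty$ as the crux supplied by the extension of Theorem 3 of \cite{koch2017TCL} to $\|\cdot\|_{\Sigma}$-type variograms are exactly the content of the extended Theorem 8/Corollary 4 of \cite{koch2019SpatialRiskAxioms}, which is what the paper invokes.

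Point 2 has a genuine gap. Your monotonicity step is right and is indeed the paper's key ingredient: by \eqref{Eq_Cov_Maxstab_Real_Marg_Eq_Coeff} and Proposition \ref{Prop_Decrease_gtilde}, the covariance of $C$ is a (strictly) decreasing function of the distance when $\gamma_{W,\mathrm{u}}$ is (strictly) increasing. But the result you then invoke does not exist: \cite{koch2019SpatialRiskAxioms} contains no sub-additivity theorem for disks or squares under a decreasing-covariance hypothesis, and in particular nothing that performs the union-versus-single-region comparison that you single out as the delicate point and then outsource. The paper's argument runs differently: it converts the decreasing covariance, via the explicit distance-density formulas of Theorem \ref{Th_R2_Lambda_Finite_BR}, into the (strict) monotonicity of $\lambda\mapsto\mathcal{R}_2(\lambda A,C)$ for $A$ a disk or a square (this is done in Section \ref{Subsubsec_R2_lambdaA}), and then combines this $\lambda$-monotonicity with the translation invariance of Point 1 --- on disks, $\mathcal{R}_2$ thereby becomes a decreasing function of the radius alone, since any disk is a translate of a homothety of any other disk --- following the reasoning of Theorem 3, Point 2, of \cite{koch2017spatial}. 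That no packaged covariance-hypothesis theorem is available is confirmed by the remark the paper places immediately after the present theorem: extending Point 2 beyond disks and squares is declared difficult precisely because the density of the distance between two uniform points is unknown for other shapes, an obstruction that would be irrelevant if the general result you cite existed. So, as written, the conclusion of Point 2 rests on a phantom citation and is not proved. The repair is short: push your covariance monotonicity through Theorem \ref{Th_R2_Lambda_Finite_BR} to obtain that $\lambda \mapsto \mathcal{R}_2(\lambda A, C)$ is (strictly) decreasing, and then run the homothety-plus-translation argument of \cite{koch2017spatial}, Theorem 3, Point 2, as the paper does.
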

}
\begin{proof}
\Tb{
1. By Lemma \ref{Lemma_CP_Locally_Integrable_Sample_Paths}, $C \in \mathcal{C}$. Moreover, since $Z$ is stationary, it is also the case for $C$. Additionally, as mentioned immediately before \eqref{Eq_R2_General}, the induced spatial risk measure $\mathcal{R}_2(\cdot, C)$ is well-defined. Finally, Var is a law-invariant classical risk measure. Hence, Theorem 5, Point 1, in \cite{koch2019SpatialRiskAxioms} gives the first result. As an instance of moving maxima random field, the tube model is stationary and measurable. Hence, the specific fields mentioned satisfy the assumptions, concluding the proof.
}

\Tb{
2. First, $\mathcal{R}_2(\cdot, C)$ is invariant under translation by Point 1. Moreover, for $A$ being a disk or a square, we have seen in Section \ref{Subsubsec_R2_lambdaA} that $\lambda \mapsto \mathcal{R}_2(\lambda A, C)$ is decreasing (respectively strictly decreasing) if $\gamma_{W, \mathrm{u}}$ is increasing (respectively strictly increasing). Therefore, the result follows by the same reasoning as in the proof of Theorem 3, Point 2, in \cite{koch2017spatial}.
}

\Tb{
3. It follows from \eqref{Eq_Link_Maxstb_Simple_Maxstab} and \eqref{Eq_D_beta_eta_tau_xi} that
$\{C(\Mb{x})\}_{\Mb{x} \in \mathbb{R}^2} = \{D_{\beta, \eta, \tau, \xi}(Z^{(\mathrm{s})}(\Mb{x}))\}_{\Mb{x} \in \mathbb{R}^2}$, where $\{ Z^{(\mathrm{s})}(\Mb{x}) \}_{\Mb{x} \in \mathbb{R}^2}$ is simple max-stable. Lemma \ref{Lem_Moment_2plusdelta} gives that $D_{\beta, \eta, \tau, \xi}$ satisfies the assumptions of Corollary 4 in \cite{koch2019SpatialRiskAxioms}. Thus, the result directly follows from Corollary 4 in \cite{koch2019SpatialRiskAxioms}. 
} 
\end{proof}
\Tb{
In the case of damage caused by extreme wind speeds, the condition $\beta \xi<1/2$ is generally satisfied for any $\beta \in \mathbb{N}_*$ since, most often, $\xi<0$. When $\xi>0$, the corresponding value is typically very close to $0$ and this condition is satisfied for low to moderate powers. Hence, in concrete applications, the results of Theorem \ref{Th_Axioms_R2} usually hold true.
\begin{Rq}
We think intuitively that Point 2 of Theorem \ref{Th_Axioms_R2} is also true for all $A \in \mathcal{A}_c$ or even $A \in \mathcal{A}$ and not only for disks and squares. However, it is difficult to prove it with the current argument based on the expressions of $\mathcal{R}_2(\lambda A, C)$ of Theorem \ref{Th_R2_Lambda_Finite_BR} as, for more complex geometric shapes of the region $A$, little is known on the
density of the distance between two points independently and uniformly distributed on $A$ \citep[e.g.,][Section 4.3.3]{moltchanov2012distance}. 
\end{Rq}
\begin{Rq}
\label{Rq_Extens_BR}
Let $\{ Z(\Mb{x}) \}_{\Mb{x} \in \mathbb{R}^2}$ have GEV parameters $\eta \in \mathbb{R}$, $\tau>0$ and $\xi \in \mathbb{R}$.
Furthermore, let $\beta \in \mathbb{N}_*$ such that $\beta \xi <1/2$. The result of Theorem \ref{Th_Axioms_R2}, Point 3, is also true if $Z$ is a sample-continuous Brown--Resnick random field associated with a variogram $\gamma_W$ which satisfies the slightly weaker condition
\Beq
\label{Eq_Condition_Extremal_Coefficient_BR}
\int_{\mathbb{R}^2} \left[ 2-\Phi \left( \sqrt{\gamma_W(\Mb{x})}/2 \right) \right]^{\delta/(2+\delta)} \ \nu(\mathrm{d} \Mb{x}) < \infty,
\Eeq
for some $\delta$ satisfying $0 < \delta < 1/(\xi \beta)-2$.
\end{Rq}
\begin{proof}
The field $Z$ is sample-continuous and thus measurable, which yields by Lemma \ref{Lemma_CP_Locally_Integrable_Sample_Paths} that $C \in \mathcal{C}$. By stationarity of the Brown--Resnick field, for all $\Mb{x}, \Mb{y} \in \mathbb{R}^2$, $\mathrm{Cov}(C(\Mb{x}), C(\Mb{y}))=\mathrm{Cov}(C(\Mb{0}), C(\Mb{x}-\Mb{y}))$. Now, it follows from \eqref{Eq_Link_Maxstb_Simple_Maxstab} and \eqref{Eq_D_beta_eta_tau_xi} that $\{C(\Mb{x})\}_{\Mb{x} \in \mathbb{R}^2} = \{D_{\beta, \eta, \tau, \xi}(Z^{(\mathrm{s})}(\Mb{x}))\}_{\Mb{x} \in \mathbb{R}^2}$, where $\{ Z^{(\mathrm{s})}(\Mb{x}) \}_{\Mb{x} \in \mathbb{R}^2}$ is a simple and sample-continuous max-stable field. Lemma \ref{Lem_Moment_2plusdelta} gives that $\mathbb{E} \left[ | C(\Mb{0}) |^{2+\delta} \right]< \infty$ and that $D_{\beta, \eta, \tau, \xi}$ satisfies the requirements for $F$ in Proposition 1 in \cite{koch2017TCL}. The latter yields $\int_{\mathbb{R}^2} \mathrm{Cov} \left( C(\Mb{0}), C(\Mb{x}) \right) \nu(\mathrm{d}\Mb{x}) >0$. Denoting by $\Theta$ the extremal coefficient of the Brown--Resnick field, \eqref{Eq_Condition_Extremal_Coefficient_BR} precisely means that $\int_{\mathbb{R}^2} [2-\Theta(\Mb{0}, \Mb{x})]^{\delta/(2+\delta)} \ \nu(\mathrm{d}\Mb{x}) < \infty$. Finally, the result follows from Theorem 6 in \cite{koch2019SpatialRiskAxioms}.
\end{proof}
}

\subsection{Central limit theorem and homothety}
\label{Subsec_CLT_Homothety} 

We first recall the concepts of Van Hove sequence and central limit theorem (CLT) for random fields \Tb{on $\mathbb{R}^d$}. For $\mathcal{V} \subset \mathbb{R}^d$ and $r>0$, we denote $\mathcal{V}^{+r}=\{ \Mb{x} \in \mathbb{R}^d: \mathrm{dist}(\Mb{x}, \mathcal{V}) \leq r \}$, where $\mathrm{dist}$ stands for the Euclidean distance. Additionally, we denote by $\partial \mathcal{V}$ the boundary of $\mathcal{V}$. A Van Hove sequence in $\mathbb{R}^d$ is a sequence $( \mathcal{V}_n )_{n \in \mathbb{N}}$ of bounded measurable subsets of $\mathbb{R}^d$ satisfying $\mathcal{V}_n \uparrow \mathbb{R}^d$, $\lim_{n \to \infty} \nu(\mathcal{V}_n)=\infty$, and $\lim_{n \to \infty} \nu((\partial \mathcal{V}_n)^{+r} )/\nu(\mathcal{V}_n) =0, \mbox{ for all } r>0$. We say that a random field $\{ C(\Mb{x}) \}_{\Mb{x} \in \mathbb{R}^d}$ such that, for all $\Mb{x} \in \mathbb{R}^d$, $\mathbb{E}\left[ C(\Mb{x})^2 \right]< \infty$, satisfies the CLT, if
$$
\int_{\mathbb{R}^d} | \mathrm{Cov}(C(\Mb{0}), C(\Mb{x})) | \  \nu(\mathrm{d}\Mb{x}) < \infty,
$$ 
and, for any Van Hove sequence $(\mathcal{V}_n)_{n \in \mathbb{N}}$ in $\mathbb{R}^d$,
$$
\frac{1}{\sqrt{\nu(\mathcal{V}_n)}} \int_{\mathcal{V}_n} (C(\Mb{x})-\mathbb{E}[C(\Mb{x})]) \  \nu(\mathrm{d}\Mb{x}) \overset{d}{\rightarrow} \mathcal{N} \left(0, \int_{\mathbb{R}^d} \mathrm{Cov}(C(\Mb{0}), C(\Mb{x})) \  \nu(\mathrm{d}\Mb{x}) \right), \quad \mbox{as } n\to\infty,
$$
where $\mathcal{N}(\mu, \sigma^2)$ denotes the normal distribution with expectation $\mu \in \mathbb{R}$ and variance $\sigma^2>0$.

Using results about CLT for functions of stationary max-stable random fields by \cite{koch2017TCL} and an outcome of \cite{koch2019SpatialRiskAxioms}, we obtain the following theorem.
\Tb{
\begin{Th} 
\label{TCL_Wind}
Let $Z$, $\beta$ and $C$ be as in Theorem \ref{Th_Axioms_R2}, Point 3.
We have, for all $A \in \mathcal{A}_c$, that
$$ \lambda  \left[ L_N(\lambda A, C) - \mu_{\beta, \eta, \tau, \xi} \right] \overset{d}{\to} \mathcal{N}\left( 0, \frac{1}{\nu(A)} \int_{\mathbb{R}^2} \mathrm{Cov} \left( Z(\Mb{0})^{\beta}, Z(\Mb{x})^{\beta} \right) \nu(\mathrm{d}\Mb{x}) \right), \mbox{ for } \lambda \to \infty,$$
where 
\Beq
\label{Eq_mu_beta_eta_tau_xi}
\mu_{\beta, \eta, \tau, \xi}=\sum_{k=0}^{\beta} {\beta \choose k} \left( \eta-\frac{\tau}{\xi} \right)^{k} \left( \frac{\tau}{\xi} \right)^{\beta-k} \Gamma(1-[\beta-k]\xi),
\Eeq
and the expression of $\mathrm{Cov} \left( Z(\Mb{0})^{\beta}, Z(\Mb{x})^{\beta} \right)$ is given by \eqref{Eq_Cov_Maxstab_Real_Marg_Eq_Coeff}.
\end{Th}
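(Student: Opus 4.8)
The plan is to deduce the statement from the central limit theorem for functions of the simple Brown--Resnick field recorded in (the extended) Theorem 3 of \cite{koch2017TCL}, combined with the rescaling that turns a CLT for integrals over Van Hove sequences into a statement about $L_N(\lambda A, \cdot)$ for convex $A$. First I would rewrite the cost field as a pointwise transformation of a simple max-stable field: by \eqref{Eq_Link_Maxstb_Simple_Maxstab} and \eqref{Eq_D_beta_eta_tau_xi} one has $\{ C(\Mb{x}) \}_{\Mb{x} \in \mathbb{R}^2} = \{ D_{\beta, \eta, \tau, \xi}(Z^{(\mathrm{s})}(\Mb{x})) \}_{\Mb{x} \in \mathbb{R}^2}$, where $Z^{(\mathrm{s})}$ is the simple Brown--Resnick field associated with the variogram $\gamma_W(\Mb{x})=m \| \Mb{x} \|_{\Sigma}^{\psi}$. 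Since $\beta \xi < 1/2$, Lemma \ref{Lem_Moment_2plusdelta} both shows that $D_{\beta, \eta, \tau, \xi}$ is measurable and strictly increasing and supplies a $\delta>0$ with $\mathbb{E}[ |D_{\beta, \eta, \tau, \xi}(Z^{(\mathrm{s})}(\Mb{0}))|^{2+\delta} ] < \infty$; this is exactly the regularity and moment input required of the transformation in Theorem 3 of \cite{koch2017TCL}.

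Second, I would invoke the extended version of that theorem stated in the Remark above, which covers the variogram class $m \| \Mb{x} \|_{\Sigma}^{\psi}$ with $m>0$, $\psi \in (0,2]$. This simultaneously yields the integrability $\int_{\mathbb{R}^2} | \mathrm{Cov}(C(\Mb{0}), C(\Mb{x})) | \ \nu(\mathrm{d}\Mb{x}) < \infty$ (the fact used without proof in Proposition \ref{Prop_Lim_gtilde_infty}) and the conclusion that $C$ satisfies the CLT in the sense of Section \ref{Subsec_CLT_Homothety}, with limiting variance $\int_{\mathbb{R}^2} \mathrm{Cov}(C(\Mb{0}), C(\Mb{x})) \ \nu(\mathrm{d}\Mb{x})$. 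Note that $C$ is stationary and measurable, being a measurable function of the stationary Brown--Resnick field.

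Third, I would pass from the Van Hove CLT to the homothety statement. For $A \in \mathcal{A}_c$ the dilations $\lambda A$ form a Van Hove family as $\lambda \to \infty$, and using $\nu(\lambda A)=\lambda^2 \nu(A)$ together with stationarity (so $\mathbb{E}[C(\Mb{x})]=\mathbb{E}[C(\Mb{0})]$) a direct rescaling gives
$$
\lambda \left[ L_N(\lambda A, C) - \mathbb{E}[C(\Mb{0})] \right] = \frac{1}{\sqrt{\nu(A)}} \cdot \frac{1}{\sqrt{\nu(\lambda A)}} \int_{\lambda A} \left( C(\Mb{x}) - \mathbb{E}[C(\Mb{x})] \right) \ \nu(\mathrm{d}\Mb{x}).
$$
Applying the CLT to the right-hand factor and multiplying by the constant $1/\sqrt{\nu(A)}$ scales the variance by $1/\nu(A)$, yielding convergence to $\mathcal{N}(0, \nu(A)^{-1} \int_{\mathbb{R}^2} \mathrm{Cov}(C(\Mb{0}), C(\Mb{x})) \ \nu(\mathrm{d}\Mb{x}))$; this packaging is precisely the corresponding CLT outcome (Theorem 8) in \cite{koch2019SpatialRiskAxioms}, which I would cite rather than reprove. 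It remains to identify the centring constant: by \eqref{Eq_Link_Maxstb_Simple_Maxstab}, the binomial theorem and Lemma \ref{Chapriskmeasure_Lem_Frechet_Margins}, $\mathbb{E}[C(\Mb{0})]$ equals $\mu_{\beta, \eta, \tau, \xi}$ as in \eqref{Ex_Exp_Z_beta}--\eqref{Eq_mu_beta_eta_tau_xi}, while the covariance inside the limiting variance is given by \eqref{Eq_Cov_Maxstab_Real_Marg_Eq_Coeff}.

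The main obstacle is the second step, namely verifying that $C$ genuinely satisfies the CLT. All the substantive probabilistic work — the weak-dependence control for the Brown--Resnick field through the growth of its variogram, and the resulting absolute integrability of the covariance — is contained inside (the extended) Theorem 3 of \cite{koch2017TCL}; the crux of the present proof is therefore to check that its hypotheses hold, which reduces to the moment and monotonicity statements of Lemma \ref{Lem_Moment_2plusdelta} together with the Remark extending that theorem to the variogram $m \| \Mb{x} \|_{\Sigma}^{\psi}$.
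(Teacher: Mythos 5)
Your proposal is correct and follows essentially the same route as the paper: use Lemma \ref{Lem_Moment_2plusdelta} to check that $D_{\beta, \eta, \tau, \xi}$ meets the hypotheses of the (extended) Theorem 3 of \cite{koch2017TCL}, conclude that $C$ satisfies the CLT, pass from the Van Hove formulation to the homothety statement by rescaling, and identify the centring constant via \eqref{Ex_Exp_Z_beta}. Two minor points: the generic packaging step you sketch is Theorem 2 (not Theorem 8) of \cite{koch2019SpatialRiskAxioms}, and the paper additionally notes that this Brown--Resnick field is sample-continuous, hence measurable, so that $C \in \mathcal{C}$ by Lemma \ref{Lemma_CP_Locally_Integrable_Sample_Paths}, which is needed for $L_N(\lambda A, C)$ to be well-defined.
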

\begin{proof}
Such a Brown--Resnick random field is sample-continuous \citep[][proof of Theorem 3]{koch2017TCL} and thus measurable, which yields by Lemma \ref{Lemma_CP_Locally_Integrable_Sample_Paths} that $C \in \mathcal{C}$. Moreover, $C$ is stationary and therefore has a constant expectation. In addition, by Lemma \ref{Lem_Moment_2plusdelta}, $D_{\beta, \eta, \tau, \xi}$ satisfies the assumptions on the function $F$ of Theorem 3 in \cite{koch2017TCL}. Thus, the latter theorem yields that $C$ satisfies the CLT. Finally, \eqref{Ex_Exp_Z_beta} yields $\mathbb{E} \left[ C(\Mb{0}) \right]=\mu_{\beta, \eta, \tau, \xi}$. The result follows from Theorem 2 in \cite{koch2019SpatialRiskAxioms}. 
\end{proof}
}
If $\lambda$ is large enough, this result gives an approximation for the distribution of the normalized spatially aggregated loss:
$$ L_N(\lambda A, C) \approx \mathcal{N} \left( \mu_{\beta, \eta, \tau, \xi}, \frac{1}{\lambda^2 \nu(A)} \int_{\mathbb{R}^2} \mathrm{Cov} \left( Z(\Mb{0})^{\beta}, Z(\Mb{x})^{\beta} \right) \nu(\mathrm{d}\Mb{x}) \right),$$
where $\approx$ means ``approximately follows''. Such an approximation is useful in practice, e.g., for an insurance company. \Tb{For the same reasons as those mentioned in Section \ref{Subsubsec_Axioms}, these results generally hold true when the focus in on damage due to extreme wind speeds.}

\subsection{Spatial risk measures associated with \Tb{value-at-risk} and expected shortfall}

For a random variable $X$ with distribution function $F_{X}$, its \Tb{value-at-risk at confidence level} $\alpha \in (0,1)$ is written $\mbox{VaR}_{\alpha}(X)=\inf \left \{ x \in \mathbb{R}: F_{X}(x) \geq \alpha \right \}$.
Moreover, provided $\mathbb{E} [ | X | ]<\infty$, its expected shortfall at \Tb{confidence} level $\alpha \in(0,1)$ is defined by 
$$ \mathrm{ES}_{\alpha}(X)= \frac{1}{1-\alpha} \int_{\alpha}^{1} \mathrm{VaR}_{u}(X) \  \nu(\mathrm{d}u).$$ 
Classic values for $\alpha$ are $0.95$ and $0.99$. In the actuarial literature, ES is sometimes referred to as \Tb{tail value-at-risk} \citep[e.g.,][Definition 2.4.1]{denuit2005actuarial}. In the following, for $\alpha \in (0,1)$, $q_{\alpha}$ and $\phi$ denote the quantile at level $\alpha$ and the density of the standard Gaussian distribution, respectively. In this section, we focus on  
$$ \mathcal{R}_{3, \alpha}(\cdot, C) =\mbox{VaR}_{\alpha} ( L_N(\cdot, C) ), \quad  \mbox{and} \quad \mathcal{R}_{4, \alpha}(\cdot, C) =\mbox{ES}_{\alpha} (L_N(\cdot, C)),$$
where $\alpha \in (0,1)$ and $C$ is given in \eqref{Eq_Specific_Cost_Process}. We first shortly comment on the functions $\lambda \mapsto \mathcal{R}_{3, \alpha}(\lambda A, C)$ and $\lambda \mapsto \mathcal{R}_{4, \alpha}(\lambda A, C)$, for $A \in \mathcal{A}$, and then provide conditions on the field $Z$ such that $\mathcal{R}_{3, \alpha}(\cdot, C)$ and $\mathcal{R}_{4, \alpha}(\cdot, C)$ satisfy (at least partially) the axioms in \Tb{Section \ref{Subsec_Spat_Risk_Axioms}}.

\subsubsection{Study of $\mathcal{R}_{3, \alpha}(\lambda A, C)$ and $\mathcal{R}_{4, \alpha}(\lambda A, C)$, $\lambda >0$}

Deriving a tractable formula for VaR of $L_N(\lambda A, C)$, $A \in \mathcal{A}$, is very difficult. The same type of approximation as that described in \citet[][Section 4.3.1]{koch2017spatial} can be used, leading to \Tb{similar} graphs as Figure 6 in \cite{koch2017spatial} for the function $\lambda \mapsto \mathcal{R}_{3, \alpha}(\lambda A, C)$, but this approach is numerically rather time-consuming.

For the same reasons, it is arduous to obtain a tractable formula for ES of $L_N(\lambda A, C)$, $A \in \mathcal{A}$. \Tb{Nonetheless, as
\Beq
\label{Eq_Expression_TVaR_CTE}
\mathrm{ES}_{\alpha}(X)=\mathbb{E} \left [ X | X > \mathrm{VaR}_{\alpha}(X) \right ]
\Eeq
for a continuous random variable $X$, it is possible to approximate ES of $L_N(\lambda A, C)$ by estimating the right-hand side of \eqref{Eq_Expression_TVaR_CTE} with a Monte-Carlo method; this is however time-consuming.}

\subsubsection{Axioms}

\Tb{We do not have an explicit formula for $\mathcal{R}_{3, \alpha}(\lambda A, C)$ and $\mathcal{R}_{4, \alpha}(\lambda A, C)$, but outcomes from \cite{koch2019SpatialRiskAxioms} (connected with Theorem \ref{TCL_Wind}) yield} their asymptotic behaviours (when $\lambda \to \infty$) under some conditions. The corresponding result is part of next theorem.

\Tb{
\begin{Th}
\label{Th_Axioms_R3_R4}
\Ben
\item Let $\{Z(\Mb{x})\}_{\Mb{x} \in \mathbb{R}^2}$ be a stationary and measurable max-stable random field with GEV parameters $\eta \in \mathbb{R}$, $\tau>0$ and $\xi \in \mathbb{R}$. Let $\beta \in \mathbb{N}_*$ and $\{ C(\Mb{x}) \}_{\Mb{x} \in \mathbb{R}^2}=\{ Z(\Mb{x})^{\beta} \}_{\Mb{x} \in \mathbb{R}^2}$ such that $C$ has a.s. locally integrable sample paths (this is satisfied, e.g., if $\beta \xi <1$). Then, for all $\alpha \in (0,1)$, $\mathcal{R}_{3, \alpha}(\cdot, C)$ satisfies the axiom of spatial invariance under translation. Provided $\mathbb{E}\left[ \left | C(\Mb{0}) \right | \right] < \infty$, the same is true for $\mathcal{R}_{4, \alpha}(\cdot, C)$. These results hold true, e.g., for the tube random field and the measurable Schlather and Brown--Resnick fields (including the Smith field) if $\beta \xi <1$.
\item Let $Z$, $\beta$ and $C$ be as in Theorem \ref{Th_Axioms_R2}, Point 3. Then:
\Ben
\item For all $\alpha \in (0,1) \backslash \{ 1/2 \}$, $\mathcal{R}_{3,\alpha}(\cdot, C)$ satisfies the axiom of asymptotic spatial homogeneity of order $-1$ with 
$$ K_1(A, C)=\mu_{\beta, \eta, \tau, \xi} \quad
\mbox{and} \quad 
K_2(A,C)= \frac{ q_{\alpha}}{\sqrt{\nu(A)}} \sqrt{ \int_{\mathbb{R}^2} \mathrm{Cov} \left( Z(\Mb{0})^{\beta}, Z(\Mb{x})^{\beta} \right) \nu(\mathrm{d}\Mb{x}) },\quad A \in \mathcal{A}_c.$$
\item For all $\alpha \in (0,1)$, $\mathcal{R}_{4,\alpha}(\cdot, C)$ satisfies the axiom of asymptotic spatial homogeneity of order $-1$ with 
$$ K_1(A, C)=\mu_{\beta, \eta, \tau, \xi} \quad
\mbox{and} \quad 
K_2(A, C)=\frac{\phi(q_{\alpha})}{\sqrt{\nu(A)} (1-\alpha) } \sqrt{ \int_{\mathbb{R}^2} \mathrm{Cov} \left( Z(\Mb{0})^{\beta}, Z(\Mb{x})^{\beta} \right) \nu(\mathrm{d}\Mb{x})} , \quad A \in \mathcal{A}_c.$$
\Een
The expressions of $\mu_{\beta, \eta, \tau, \xi}$ and $\mathrm{Cov} \left( Z(\Mb{0})^{\beta}, Z(\Mb{x})^{\beta} \right)$ are given in \eqref{Eq_mu_beta_eta_tau_xi} and \eqref{Eq_Cov_Maxstab_Real_Marg_Eq_Coeff}, respectively.
\Een
\end{Th}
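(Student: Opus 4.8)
The plan is to prove both parts by reduction to results that are already available: Part 1 follows from the law-invariance of VaR and ES together with the stationarity of the cost field, exactly along the lines of the proof of Theorem \ref{Th_Axioms_R2}, Point 1, while Part 2 follows from the central limit theorem established in Theorem \ref{TCL_Wind}, combined with the VaR/ES conversion results of \cite{koch2019SpatialRiskAxioms} and the explicit value-at-risk and expected shortfall of a Gaussian law.

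For Part 1, I would first note that $C \in \mathcal{C}$: the field $C=Z^{\beta}$ is measurable since $Z$ is, and its sample paths are a.s. locally integrable by assumption (in particular by Lemma \ref{Lemma_CP_Locally_Integrable_Sample_Paths} when $\beta \xi < 1$). As $Z$ is stationary, so is $C$. Since $\mathrm{VaR}_{\alpha}$ is law-invariant for every $\alpha \in (0,1)$, and $\mathrm{ES}_{\alpha}$ is law-invariant whenever $\mathbb{E}[|C(\Mb{0})|]<\infty$, the translation-invariance statement of Theorem 5, Point 1, in \cite{koch2019SpatialRiskAxioms} applies to both $\mathcal{R}_{3,\alpha}(\cdot,C)$ and $\mathcal{R}_{4,\alpha}(\cdot,C)$. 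The concrete examples (tube, Schlather, Brown--Resnick and Smith fields) are stationary and measurable and satisfy $\beta\xi<1$ by hypothesis, so they fall within this scope, concluding Part 1.

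For Part 2, the key input is the CLT of Theorem \ref{TCL_Wind}, giving $\lambda[L_N(\lambda A, C)-\mu_{\beta,\eta,\tau,\xi}] \overset{d}{\to} \mathcal{N}(0,\sigma_A^2)$ with $\sigma_A^2=\frac{1}{\nu(A)}\int_{\mathbb{R}^2}\mathrm{Cov}(Z(\Mb{0})^{\beta},Z(\Mb{x})^{\beta})\,\nu(\mathrm{d}\Mb{x})$. Recalling that, for $X\sim\mathcal{N}(\mu,\sigma^2)$, one has $\mathrm{VaR}_{\alpha}(X)=\mu+\sigma q_{\alpha}$ and $\mathrm{ES}_{\alpha}(X)=\mu+\sigma\phi(q_{\alpha})/(1-\alpha)$, the expansions $\mathcal{R}_{3,\alpha}(\lambda A,C)=\mu_{\beta,\eta,\tau,\xi}+\sigma_A q_{\alpha}/\lambda+o(1/\lambda)$ and $\mathcal{R}_{4,\alpha}(\lambda A,C)=\mu_{\beta,\eta,\tau,\xi}+\sigma_A\phi(q_{\alpha})/[(1-\alpha)\lambda]+o(1/\lambda)$ produce exactly the claimed $K_1$ and $K_2$. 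I would invoke the corresponding results of \cite{koch2019SpatialRiskAxioms}, which derive asymptotic spatial homogeneity of order $-1$ for VaR and ES directly from the CLT, to make these expansions rigorous. The restriction $\alpha\neq 1/2$ for VaR arises because $q_{1/2}=0$ forces $K_2=0$, violating the requirement $K_2(\cdot,C):\mathcal{A}_c\to\mathbb{R}\backslash\{0\}$ in the axiom; for ES no such restriction is needed since $\phi(q_{\alpha})>0$ for every $\alpha$.

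The main obstacle is the passage from the convergence in distribution of Theorem \ref{TCL_Wind} to the convergence of the risk measures themselves. For VaR this is comparatively mild, as weak convergence to a continuous Gaussian limit forces convergence of the quantiles at every level. For ES, however, one must also control the tail contribution, which requires a uniform-integrability argument on the rescaled aggregated losses; this is precisely the content of the VaR/ES theorems in \cite{koch2019SpatialRiskAxioms} that I would cite, so the work here reduces to checking their hypotheses, chiefly the CLT (Theorem \ref{TCL_Wind}) and the strict positivity of $\int_{\mathbb{R}^2}\mathrm{Cov}(Z(\Mb{0})^{\beta},Z(\Mb{x})^{\beta})\,\nu(\mathrm{d}\Mb{x})$, needed both to ensure a non-degenerate limit and to guarantee $K_2\neq 0$. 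This positivity follows, as in the proof of Remark \ref{Rq_Extens_BR}, from Proposition 1 in \cite{koch2017TCL}.
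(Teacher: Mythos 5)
Your proposal is correct and follows essentially the same route as the paper: Part 1 is obtained from stationarity of $C=Z^{\beta}$, well-definedness of the risk measures (with the moment condition for ES), law-invariance of VaR and ES, and Theorem 5, Point 1, in \cite{koch2019SpatialRiskAxioms}; Part 2 is obtained by reducing to the CLT-based VaR/ES results of \cite{koch2019SpatialRiskAxioms}, which is exactly what the paper does by reusing the argument of Theorem \ref{Th_Axioms_R2}, Point 3 (via \eqref{Eq_Link_Maxstb_Simple_Maxstab}, Lemma \ref{Lem_Moment_2plusdelta} and the cited theorems). Your additional remarks on the Gaussian quantile formulas, the role of $\alpha \neq 1/2$, and the strict positivity of the integrated covariance via Proposition 1 in \cite{koch2017TCL} correctly spell out details the paper leaves implicit inside the cited results.
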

}
\begin{proof} 
\Tb{
1. The field $C$ belongs to $\mathcal{C}$ by assumption and is stationary by stationarity of $Z$. The fact that $C$ has a.s. locally integrable sample paths when $\beta \xi <1$ directly follows from Lemma \ref{Lemma_CP_Locally_Integrable_Sample_Paths}. Now, $\mathcal{R}_{3, \alpha}(\cdot, C)$ is well-defined. In addition, we have, for all $\lambda>0$, that
$$ | L_N(\lambda A, C) | \leq \frac{1}{\nu(\lambda A)} \int_{\lambda A} | C(\Mb{x}) | \ \nu(\mathrm{d} \Mb{x}).$$
Accordingly, if $\mathbb{E}\left[ \left | C(\Mb{0}) \right | \right] < \infty$, the stationarity of $C$ and Fubini's theorem entail that $\mathbb{E} \left[ | L_N(\lambda A, C) | \right]< \infty$, which implies that $\mathcal{R}_{4, \alpha}(\cdot, C)$ is well-defined. Finally, VaR and ES are both law-invariant classical risk measures. Consequently, Theorem 5, Point 1, in \cite{koch2019SpatialRiskAxioms} gives the first result. The end of the proof of Theorem \ref{Th_Axioms_R2}, Point 1, and the fact that $\beta \xi<1$ implies $\mathbb{E}\left[ \left | C(\Mb{0}) \right | \right] < \infty$ show that the specific fields mentioned satisfy the required assumptions, concluding the proof.}

\Tb{
2. The arguments are the same as in the proof of Theorem \ref{Th_Axioms_R2}, Point 3. 
}
\end{proof}
\Tb{As above, these results generally hold true in the case of the risk of extreme wind speeds.} 

\begin{Rq}
\label{Rq_Ext_Complex}
\Tb{The results of Theorem \ref{Th_Axioms_R2}, Point 3, Theorem \ref{TCL_Wind} and Theorem  \ref{Th_Axioms_R3_R4}, Point 2, hold true for a more general Brown--Resnick field, more specifically satisfying the conditions of Remark 3 in \cite{koch2017TCL} or, equivalently, Theorem 9 in \cite{koch2019SpatialRiskAxioms}.}
\end{Rq}

\section{Conclusion}
\label{Sec_Conclusion}

\Tb{
As detailed in the paper, literature suggests that power-laws are appropriate wind damage functions but that the suitable power varies significantly from a situation to the other, most often from $2$ to $12$; especially it is an increasing function of the deductible of the insurance contract. Thus, the study of powers of extremal fields is insightful for the assessment of the risk of losses due to extreme wind speeds, and analyzing the sensitivity to the value of the power is needed. In this paper, we thoroughly investigate the correlation structure of powers of the Brown--Resnick max-stable random field, which is a very suited model for spatial extremes. Even if our primary focus is on risk assessment of damaging wind speeds, the obtained results may be valuable for the extreme-value community regardless of any notion of risk. Then, we illustrate the concepts of spatial risk measure and corresponding axioms introduced in \cite{koch2017spatial, koch2019SpatialRiskAxioms} in a context where the cost fields are precisely powers of max-stable fields. Using the previous part, we perform a comprehensive study of spatial risk measures associated with variance and induced by such cost fields. In addition, we show that under relatively mild conditions which are typically satisfied for the risk of damaging extreme wind speeds, the induced spatial risk measures associated with several classical risk measures satisfy (at least part of) the axioms. Variance, VaR and ES lead to asymptotic spatial homogeneity of order $-2$, $-1$ and $-1$, respectively. Our results are valuable for risk assessment in actuarial practice of, e.g., extratropical and tropical cyclones.}

\Tb{
Ongoing work consists, inter alia, in applying the results of the current paper to the pricing of concrete reinsurance treaties as well as event-linked securities such as catastrophe bonds.
}

\section*{Acknowledgements}
The author wishes to thank Anthony C. Davison, Paul Embrechts, Pierre Ribereau and Christian Y. Robert for some comments. He also would like to acknowledge the MIRACCLE-GICC project, RiskLab at ETH Zurich, the Swiss Finance Institute, the Swiss National Science Foundation (project 200021\_178824) and the Institute of Mathematics at EPFL for financial support.

\newpage

\appendix

\section{\Tb{Case of simple max-stable random fields and $\beta<1/2$ or $\beta<1$}}
\label{Sec_Appendix_Maxstable}

\Tb{This appendix explains that the results obtained above basically apply if the max-stable random field $Z$ is simple (instead of having general GEV margins) and $\beta<1/2$ or $\beta<1$. The point is that $\beta$ is allowed to take any value below these upper bounds and does not need to be an integer anymore. As standard Fr\'echet margins are far from being realistic for wind speed extremes, the interest of this section mostly lies in a better understanding of some properties of max-stable fields.}

\Tb{First we consider the dependence measure $\mathrm{Corr} ( Z^{(\mathrm{s})}(\Mb{x}_1)^{\beta}, Z^{(\mathrm{s})}(\Mb{x}_2)^{\beta})$, where $\{ Z^{(\mathrm{s})}(\Mb{x}) \}_{\Mb{x} \in \mathbb{R}^2}$ is a simple Brown--Resnick max-stable random field and $\beta < 1/2$, i.e., $\mathcal{D}_{\beta, 1, 1, 1}(\Mb{x}_1, \Mb{x}_2)$. The condition $\beta \xi < 1/2$ with $\beta \in \mathbb{N}_*$ of \eqref{Eq_DepMeas} translates into $\beta < 1/2$; any negative value is allowed as simple max-stable fields are a.s. positive. We introduce, for $\beta<1/2$,  
$$
g_{\beta}^{(\mathrm{s})}(h) =
\left \{
\begin{array}{ll}
\Gamma(1-2 \beta) & \mbox{if} \quad  h=0, \\ 
\displaystyle \int_{0}^{\infty} \theta^{\beta} \Big[ C_2(\theta,h) \  C_1(\theta,h)^{2 \beta -2} \ \Gamma(2-2 \beta) 
+ C_3(\theta,h) \ C_1(\theta,h)^{2 \beta -1} \  \Gamma(1-2 \beta) \Big] \  \nu(\mathrm{d}\theta) & \mbox{if} \quad h>0,
\end{array}
\right.
$$
which arises when setting $\beta_1=\beta_2$ in the function $g_{\beta_1, \beta_2}^{(\mathrm{s})}$ specified in \eqref{Eq_Def_g_beta1_beta2}.
Denoting by $\gamma_W$ the variogram of $Z^{(\mathrm{s})}$, it follows from Theorem \ref{Th_Expression_Covariance_Damages} that, for all $\Mb{x}_1, \Mb{x}_2 \in \mathbb{R}^2$ and $\beta < 1/2$,
$\mathrm{Cov} ( Z^{(\mathrm{s})}(\Mb{x}_1)^{\beta}, Z^{(\mathrm{s})}(\Mb{x}_2)^{\beta} )=g_{\beta}^{(\mathrm{s})}(\sqrt{\gamma_W(\Mb{x}_2-\Mb{x}_1)}) - \left[ \Gamma(1- \beta) \right]^2$. Our dependence measure (provided that $\beta \neq 0$) is readily derived and its behaviour is similar to the one we observed in Section \ref{Sec_Dependence_Power_Maxstable} (not shown).} 

\Tb{We now investigate the function $g_{\beta}^{(\mathrm{s})}$ in further details.
Very similar proofs as for Propositions~\ref{Prop_Decrease_gtilde}--\ref{Prop_Lim_gtilde_infty} yield, for $\beta, \beta_1, \beta_2 <1/2$, that the functions $g_{\beta_1, \beta_2}^{(\mathrm{s})}$ defined in \eqref{Eq_Def_g_beta1_beta2} and $g_{\beta}^{(\mathrm{s})}$ are strictly decreasing, $\lim_{h \to 0} g_{\beta}^{(\mathrm{s})}(h)=\Gamma(1-2 \beta)$
(implying that $g_{\beta}^{(\mathrm{s})}$ is continuous everywhere on $[0, \infty)$) and $\lim_{h \to \infty} g_{\beta}^{(\mathrm{s})}(h) = [\Gamma(1-\beta)]^2$. This entails that, for any $h \geq 0$, $\lim_{\beta \to -\infty} g_{\beta}^{(\mathrm{s})}(h)= \infty$.
Figure \ref{Plot_Persp_gbeta}, obtained using adaptive quadrature with a relative accuracy of $10^{-5}$, shows that the decrease of $g_{\beta}^{(\mathrm{s})}(h)$ for a given $\beta$ with respect to $h$ is more and more pronounced when $|\beta|$ increases, and that, for $h$ fixed, the absolute value of the slope of $g_{\beta}^{(\mathrm{s})}(h)$ increases very fast with $|\beta|$, in link with rapid divergence to $\infty$. Obviously, the behaviour of $\mathrm{Cov} ( Z^{(\mathrm{s})}(\Mb{x}_1)^{\beta}, Z^{(\mathrm{s})}(\Mb{x}_2)^{\beta})$ is similar; the same holds true for $g_{\beta_1, \beta_2}^{(\mathrm{s})}$.
\begin{figure}[h!]
\center
\includegraphics[scale=0.8]{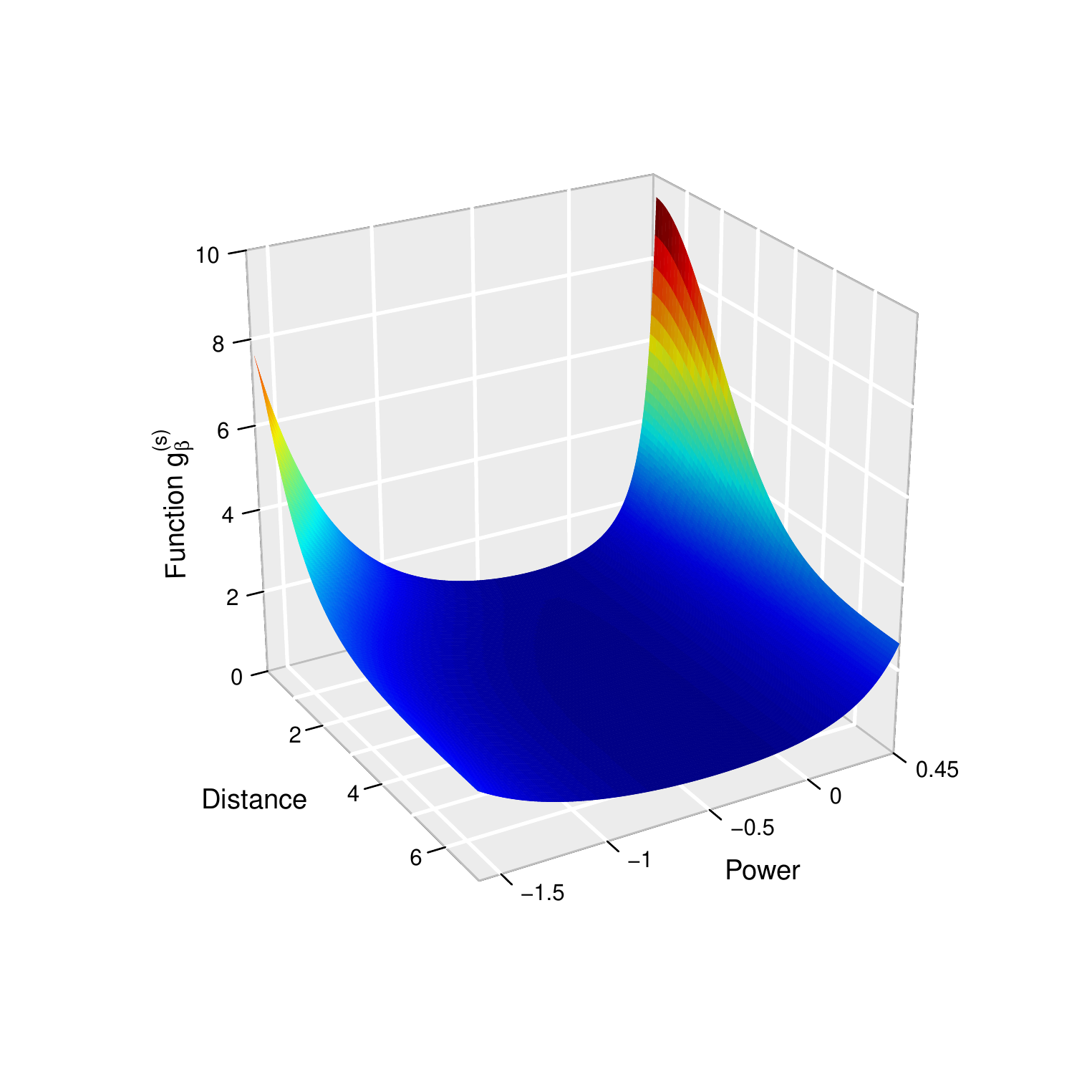}
\caption{Evolution of the function $g_{\beta}^{(\mathrm{s})}$ with respect to the distance $h$ and the power $\beta$ for $\beta \in [-1.6, 0.45]$.}
\label{Plot_Persp_gbeta}
\end{figure}
}

\Tb{The next theorem gives the expression of $\mathcal{R}_2(\lambda A, C)$ when $Z^{(\mathrm{s})}$ is a simple Brown--Resnick random field with an isotropic variogram, $A$ is either a disk or a square, and $\beta < 1/2$.
\begin{Th}
\label{Th_Behaviour_R2_Simple_Max}
Let $\{ Z^{(\mathrm{s})}(\Mb{x}) \}_{\Mb{x} \in \mathbb{R}^2}$ be a measurable simple Brown--Resnick random field associated with an isotropic variogram $\gamma_W$ whose corresponding univariate function is $\gamma_{W, \mathrm{u}}$. 
Let $\beta < 1/2$ and $\{ C(\Mb{x}) \}_{\Mb{x} \in \mathbb{R}^2}=\{ Z^{(\mathrm{s})}(\Mb{x})^{\beta} \}_{\Mb{x} \in \mathbb{R}^2}$. Finally, let $f_d$ and $f_s$ be as in Theorem \ref{Th_R2_Lambda_Finite_BR}. Then:
\Ben
\item Let $A$ be a disk with radius $R$. For all $\lambda >0$, we have
$$
\mathcal{R}_2(\lambda A, C)=- \left[ \Gamma(1-\beta) \right]^2 + \int_{h=0}^{2R} f_d(h,R) \ g_{\beta}^{(\mathrm{s})} \left( \sqrt{\gamma_{W, \mathrm{u}}(\lambda h)} \right)\  \nu(\mathrm{d}h).
$$
\item Let $A$ be a square with side $R$. For all $\lambda >0$, we have
$$\mathcal{R}_2(\lambda A, C)= - \left[ \Gamma(1-\beta) \right]^2 + \int_{h=0}^{R \sqrt{2}} f_s(h,R) \ g_{\beta}^{(\mathrm{s})} \left( \sqrt{\gamma_{W, \mathrm{u}}(\lambda h)} \right)\  \nu(\mathrm{d}h).$$
\Een
\end{Th}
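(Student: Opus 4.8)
The plan is to mirror the proof of Theorem \ref{Th_R2_Lambda_Finite_BR}, the only simplification being that the binomial expansion over the GEV margins is replaced by the covariance formula available directly for simple fields. First I would verify that the general variance representation underlying \eqref{Eq_R2_General} applies here. Since $Z^{(\mathrm{s})}$ is measurable and $\beta < 1/2 < 1$, Lemma \ref{Chapriskmeasure_Lem_Frechet_Margins} gives $\mathbb{E}[|Z^{(\mathrm{s})}(\Mb{x})^{\beta}|]=\Gamma(1-\beta) < \infty$ for every $\Mb{x}$; this constant function is locally integrable, so Proposition 1 in \cite{koch2019SpatialRiskAxioms} yields $C \in \mathcal{C}$. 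Moreover $\beta < 1/2$ ensures by the same lemma that $\mathbb{E}[C(\Mb{x})^2]=\Gamma(1-2\beta)<\infty$, which is constant in $\Mb{x}$ and hence bounded on any $A \in \mathcal{A}$. Therefore Theorem 4 in \cite{koch2019SpatialRiskAxioms} applies and gives, for all $\lambda>0$, the identity \eqref{Eq_R2_General} with $Z$ replaced by $Z^{(\mathrm{s})}$.

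Next I would insert the covariance expression. As noted in the text preceding the statement, Theorem \ref{Th_Expression_Covariance_Damages} with $\beta_1=\beta_2=\beta$ gives, for all $\Mb{x}, \Mb{y} \in \mathbb{R}^2$, that $\mathrm{Cov}(Z^{(\mathrm{s})}(\Mb{x})^{\beta}, Z^{(\mathrm{s})}(\Mb{y})^{\beta}) = g_{\beta}^{(\mathrm{s})}(\sqrt{\gamma_{W,\mathrm{u}}(\|\Mb{y}-\Mb{x}\|)}) - [\Gamma(1-\beta)]^2$, where isotropy has been used to write $\gamma_W(\Mb{y}-\Mb{x})=\gamma_{W,\mathrm{u}}(\|\Mb{y}-\Mb{x}\|)$. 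Substituting this into \eqref{Eq_R2_General} and using $\nu(\lambda A)=\lambda^2\nu(A)$ splits the double integral into two pieces: the constant $-[\Gamma(1-\beta)]^2$ integrates out because $[\nu(\lambda A)]^{-2}\int_{\lambda A}\int_{\lambda A}\nu(\mathrm{d}\Mb{x})\,\nu(\mathrm{d}\Mb{y})=1$, producing the leading term, while the remaining piece equals $[\nu(\lambda A)]^{-2}\int_{\lambda A}\int_{\lambda A} g_{\beta}^{(\mathrm{s})}(\sqrt{\gamma_{W,\mathrm{u}}(\|\Mb{y}-\Mb{x}\|)})\,\nu(\mathrm{d}\Mb{x})\,\nu(\mathrm{d}\Mb{y})$.

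Finally I would rewrite this last double integral as an expectation over the Euclidean distance $H$ between two points drawn independently and uniformly on $\lambda A$. For the disk this distance has density $f_d(\cdot, \lambda R)$, and by the elementary scaling $H \overset{d}{=} \lambda H_0$, with $H_0$ the corresponding distance on the disk of radius $R$, a change of variable transfers the factor $\lambda$ from the density argument into the variogram, yielding $\int_{0}^{2R} f_d(h,R)\, g_{\beta}^{(\mathrm{s})}(\sqrt{\gamma_{W,\mathrm{u}}(\lambda h)})\,\nu(\mathrm{d}h)$; the square case is identical with $f_s$ and upper limit $R\sqrt{2}$. This geometric reduction --- identifying the distribution of the inter-point distance and exploiting its linear scaling in the radius --- is the only step beyond routine bookkeeping, and it is precisely the one carried out in the proof of Corollary 1 in \cite{koch2017spatial}, so I would simply invoke those arguments. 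Everything else is the direct substitution above, which makes this the easiest of the three variance theorems, since no binomial sum over GEV components appears.
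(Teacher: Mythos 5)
Your proposal is correct and follows essentially the same route as the paper: the paper gives no separate proof for this appendix theorem precisely because it is the simple-margin specialization of Theorem~\ref{Th_R2_Lambda_Finite_BR}, whose proof consists of exactly your three steps --- establish \eqref{Eq_R2_General} via Theorem~4 of \cite{koch2019SpatialRiskAxioms}, insert the covariance formula of Theorem~\ref{Th_Expression_Covariance_Damages} (here with $\beta_1=\beta_2=\beta$, so $g^{(\mathrm{s})}_{\beta,\beta}=g^{(\mathrm{s})}_{\beta}$), and reduce the double integral to the inter-point-distance densities $f_d$, $f_s$ by the scaling argument of Corollary~1 in \cite{koch2017spatial}. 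Your only deviation is a sensible one: since Lemma~\ref{Lemma_CP_Locally_Integrable_Sample_Paths} is stated for integer $\beta$ and GEV margins, you instead verify $C\in\mathcal{C}$ directly from Lemma~\ref{Chapriskmeasure_Lem_Frechet_Margins} and Proposition~1 of \cite{koch2019SpatialRiskAxioms}, which is exactly how that lemma is proved anyway.
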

We also have the equivalent of Theorem \ref{Th_Limiting_Risk_Measure}.
\begin{Th}
\label{Th_Limiting_Risk_Measure_Simple_Max}
Let $Z^{\mathrm{(s)}}$, $\beta$ and $C$ be as in Theorem \ref{Th_Behaviour_R2_Simple_Max}. Furthermore, assume that $\gamma_{W, \mathrm{u}}$ is measurable and satisfies
$$
\lim_{h \to \infty} \gamma_{W, \mathrm{u}}(h)= \infty.
$$
Then, for all $A$ being a disk with radius $R$ or a square with side $R$, $\lim_{\lambda \to \infty} \mathcal{R}_2 (\lambda A, C)=0$.
\end{Th}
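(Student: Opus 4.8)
The plan is to mirror the proof of Theorem \ref{Th_Limiting_Risk_Measure}, replacing the role of $g_{\beta, \eta, \tau, \xi}$ by the simple-field function $g_{\beta}^{(\mathrm{s})}$ and the expression \eqref{Eq_Expression_R2_Disk} by the one supplied in Theorem \ref{Th_Behaviour_R2_Simple_Max}. I would treat the case where $A$ is a disk with radius $R$ in detail, the square case being identical up to replacing $f_d$ by $f_s$ and the upper limit $2R$ by $R\sqrt{2}$.

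First I would fix $A$ a disk and start from Theorem \ref{Th_Behaviour_R2_Simple_Max}, Point 1, namely
$$\mathcal{R}_2(\lambda A, C) = -\left[ \Gamma(1-\beta) \right]^2 + \int_{h=0}^{2R} f_d(h,R)\, g_{\beta}^{(\mathrm{s})}\left(\sqrt{\gamma_{W,\mathrm{u}}(\lambda h)}\right)\, \nu(\mathrm{d}h),$$
so that the whole problem reduces to computing the limit of the integral as $\lambda \to \infty$. To pass the limit inside the integral I would invoke Lebesgue's dominated convergence theorem and check its hypotheses in turn. The dominating constant comes from boundedness: $f_d(\cdot, R)$ is continuous on the compact interval $[0, 2R]$, hence bounded, while $g_{\beta}^{(\mathrm{s})}$ is bounded because, as established just before the theorem by the analogues of Propositions \ref{Prop_Decrease_gtilde}--\ref{Prop_Lim_gtilde_infty}, it is continuous and strictly decreasing on $[0, \infty)$ with finite limits $\Gamma(1-2\beta)$ at $0$ and $[\Gamma(1-\beta)]^2$ at $\infty$. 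Consequently there exists $U > 0$ with $| f_d(h,R)\, g_{\beta}^{(\mathrm{s})}(\sqrt{\gamma_{W,\mathrm{u}}(\lambda h)}) | \leq U$ uniformly in $h$ and $\lambda$. Measurability of the integrand for each $\lambda$ follows since $f_d$ and $g_{\beta}^{(\mathrm{s})}$ are continuous, hence measurable, and $\gamma_{W,\mathrm{u}}$ is measurable by hypothesis.

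Next I would compute the pointwise limit: for each fixed $h > 0$ the hypothesis $\lim_{h\to\infty}\gamma_{W,\mathrm{u}}(h) = \infty$ gives $\gamma_{W,\mathrm{u}}(\lambda h) \to \infty$ as $\lambda \to \infty$, whence $g_{\beta}^{(\mathrm{s})}(\sqrt{\gamma_{W,\mathrm{u}}(\lambda h)}) \to [\Gamma(1-\beta)]^2$. Since $f_d(\cdot, R)$ is a probability density on $[0, 2R]$, dominated convergence then yields
$$\lim_{\lambda \to \infty} \int_{h=0}^{2R} f_d(h,R)\, g_{\beta}^{(\mathrm{s})}\left(\sqrt{\gamma_{W,\mathrm{u}}(\lambda h)}\right)\, \nu(\mathrm{d}h) = \left[ \Gamma(1-\beta) \right]^2 \int_{h=0}^{2R} f_d(h,R)\, \nu(\mathrm{d}h) = \left[ \Gamma(1-\beta) \right]^2.$$
Plugging this back into the expression for $\mathcal{R}_2(\lambda A, C)$ gives $\lim_{\lambda\to\infty}\mathcal{R}_2(\lambda A, C) = -[\Gamma(1-\beta)]^2 + [\Gamma(1-\beta)]^2 = 0$, which is the claim.

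I expect no serious obstacle here: once the explicit form of $\mathcal{R}_2(\lambda A, C)$ from Theorem \ref{Th_Behaviour_R2_Simple_Max} and the boundedness and limit properties of $g_{\beta}^{(\mathrm{s})}$ are in hand, the argument is a routine dominated-convergence computation that parallels Theorem \ref{Th_Limiting_Risk_Measure} verbatim. The only point requiring mild care is making the domination uniform in both $h$ and $\lambda$ simultaneously, but this is immediate from the boundedness of $g_{\beta}^{(\mathrm{s})}$ on all of $[0,\infty)$ rather than merely on the range relevant for a single $\lambda$, so the proof should go through without difficulty.
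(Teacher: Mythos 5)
Your proof is correct and is exactly what the paper intends: the paper states Theorem \ref{Th_Limiting_Risk_Measure_Simple_Max} without a written proof, as the ``equivalent of Theorem \ref{Th_Limiting_Risk_Measure}'', relying on the appendix's observation that $g_{\beta}^{(\mathrm{s})}$ is strictly decreasing, continuous, and has the limits $\Gamma(1-2\beta)$ at $0$ and $[\Gamma(1-\beta)]^2$ at $\infty$. Your dominated-convergence argument, with the uniform bound from the boundedness of $f_d$ and $g_{\beta}^{(\mathrm{s})}$ and the pointwise limit for $h>0$ (the point $h=0$ being negligible), mirrors the paper's proof of Theorem \ref{Th_Limiting_Risk_Measure} verbatim and correctly yields $\lim_{\lambda\to\infty}\mathcal{R}_2(\lambda A, C)=0$.
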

Theorem \ref{Th_Limiting_Risk_Measure_Simple_Max} is consistent with the current knowledge about mixing of max-stable random fields. Under the conditions of Theorem \ref{Th_Limiting_Risk_Measure_Simple_Max}, the extremal coefficient function $\Theta$ of the Brown--Resnick field is isotropic and so we introduce the function $\Theta_{\mathrm{u}}: [0, \infty) \to [0, 2]$ such that, for all $\Mb{x}_1, \Mb{x}_2 \in \mathbb{R}^2$, $\Theta(\Mb{x}_1, \Mb{x}_2)=\Theta_{\mathrm{u}}(\| \Mb{x}_2 - \Mb{x}_1 \|)$. As $\Theta_{\mathrm{u}}(h)=2 \Phi(\sqrt{\gamma_{W, \mathrm{u}}(h)}/2)$ \citep[e.g.,][]{davison2012statistical}, we have $\lim_{h \to \infty} \Theta_{\mathrm{u}}(h)=2$, which implies by the fact that Theorem 3.1 in \cite{kabluchko2010ergodic} can be extended to random fields on $\mathbb{R}^d$, $d>1$ \citep[e.g.,][p.20]{DombryHDR2012}, that the Brown--Resnick field is mixing under the assumptions of Theorem \ref{Th_Limiting_Risk_Measure}. Thus, it is mean-ergodic, which entails the result of Theorem \ref{Th_Limiting_Risk_Measure}.
}

\Tb{
Concerning the axioms, Theorem \ref{Th_Axioms_R2}, Theorem \ref{TCL_Wind}, Remark \ref{Rq_Extens_BR}, Point 2 of Theorem \ref{Th_Axioms_R3_R4} and Remark \ref{Rq_Ext_Complex} also hold true if the field $Z$ satisfies the same assumptions but is simple (instead of having general GEV margins) and $\beta < 1/2$ (not required to be a positive integer). Similarly, Point 1 of Theorem \ref{Th_Axioms_R3_R4} is also true if $Z$ satisfies the same conditions but is simple and $\beta <1$.
}

\newpage

\bibliographystyle{apalike}
\bibliography{References_Erwan}
\end{document}